\theoremstyle{plain}
\newtheorem{proposition}{Proposition}[section]
\newtheorem{theorem}[proposition]{Theorem}
\newtheorem{lemma}[proposition]{Lemma}
\theoremstyle{remark} \theoremstyle{definition}
\newtheorem{ex}{Example}
\newtheorem{remark}[proposition]{Remark}
\newtheorem{definition}[proposition]{Definition}
\newtheorem*{conjecture}{Conjecture}
\newcommand{\C}{\mathbb{C}}
\newcommand{\R}{\mathbb{R}}
\newcommand{\Q}{\mathbb{Q}}
\newcommand{\Z}{\mathbb{Z}}
\newcommand{\N}{\mathbb{N}}
\newcommand{\V}[1]{\textbf{#1}}
\newcommand{\p}{\mathfrak{p}}
\newcommand{\abs}[1]{\left \lvert #1 \right \rvert}
\newcommand{\norm}[1]{\left \lVert #1 \right \rVert}
\newcommand{\T}{T_{\rm ext}^{-1}}
\title{The geometry of non-unit Pisot substitutions}
\author[M. Minervino]{Milton Minervino}
\address[M.~M.]{Chair of Mathematics and Statistics,
Department of Mathematics and Information Technology, University of
Leoben, Franz-Josef-Strasse 18, A-8700 Leoben, Austria}
\email{minervino@math.tugraz.at}
\author[J. M. Thuswaldner]{J\"org M. Thuswaldner}
\address[J.~M.~T.]{Chair of Mathematics and Statistics,
Department of Mathematics and Information Technology, University of
Leoben, Franz-Josef-Strasse 18, A-8700 Leoben, Austria}
\email{joerg.thuswaldner@unileoben.ac.at}
\thanks{The authors are supported by the Doctoral Program W1230 ``Discrete Mathematics'' granted by the Austrian Science Fund (FWF)}
\keywords{Rauzy fractal, tiling, $p$-adic completion, beta-numeration}
\subjclass[2010]{05B45, 11A63, 11F85, 28A80}
\date{\today}
\begin{document}

\begin{abstract}
Let $\sigma$ be a non-unit Pisot substitution and let $\alpha$ be the associated Pisot number. It is known that one can associate certain fractal tiles, so-called \emph{Rauzy fractals}, with $\sigma$. In our setting, these fractals are subsets of a certain open subring of the ad\`ele ring $\mathbb{A}_{\mathbb{Q}(\alpha)}$.  We present several approaches on how to define Rauzy fractals and discuss the relations between them. In particular, we consider Rauzy fractals as the natural geometric objects of certain numeration systems, define them in terms of the one-dimensional realization of $\sigma$ and its dual (in the spirit of Arnoux and Ito), and view them as the dual of multi-component model sets for particular cut and project schemes. We also define stepped surfaces suited for non-unit Pisot substitutions. We provide basic topological and geometric properties of Rauzy fractals associated with non-unit Pisot substitutions, prove some tiling results for them, and provide relations to subshifts defined in terms of the periodic points of $\sigma$, to adic transformations, and a domain exchange. We illustrate our results by examples on two and three letter substitutions.
\end{abstract}

\maketitle

\begin{section}{Introduction}

In order to derive dynamical properties of unit Pisot substitutions, Arnoux and Ito~\cite{AI:00} associated fractal tiles to these substitutions. These so-called \emph{Rauzy fractals}, named in honor of the seminal paper by Rauzy~\cite{R:82} who first worked out a complete example, have been studied extensively in the meantime (see {\it e.g.} \cite{BK:06,BS:05,IR:06,KS:12,SAI:01,ST:09,SS:02}). Siegel~\cite{Si:03} defined these fractals also for Pisot substitutions that are not necessarily unit. As conjectured already by Rauzy~\cite{R:88}, this requires to extend the representation space for these sets by certain $\mathfrak{p}$-adic factors (Figure~\ref{ifig} shows an example of a tiling induced by such a Rauzy fractal). In his fundamental PhD thesis, Sing~\cite{Sin:06} studied various properties of these non-unit Rauzy fractals in the context of model sets and recently, Akiyama {\it et al.}~\cite{ABBS:08} investigated them in the context of beta-numeration (see also \cite{BS:07} for some number theoretic properties of these sets).

\begin{figure}[h]
\centering
\includegraphics[scale=.7]{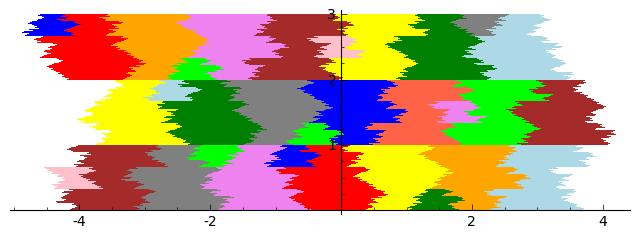}
\setlength\unitlength{1mm}
\begin{picture}(0,0)
\put(0,0){$\mathbb{R}$}
\put(-53,42){$\mathbb{Q}_3$}
\end{picture}
\caption{Tiling of the representation space $K_\sigma$ associated with the $2$-letter non-unit substitution $\sigma(1)=1^52$, $\sigma(2)=1^3$.\label{ifig}}
\end{figure}

Rauzy fractals have been defined in different ways and were studied in different contexts. So far, most of these approaches have been considered only in the case of unit Pisot substitutions. In the first part of the present paper we present and correlate these approaches in the general case of a non-unit Pisot substitution $\sigma$. Here, if $\alpha$ is the Pisot number associated with $\sigma$, the Rauzy fractal associated with $\sigma$ is defined in a certain subring $K_\sigma$ of the ad\`ele ring $\mathbb{A}_{\mathbb{Q}(\alpha)}$, whose non-Archimedean factors are determined by the prime divisors of the principal ideal $(\alpha)$ of the ring of integers $\mathcal{O}$ of $\mathbb{Q}(\alpha)$. In particular, we define Rauzy fractals in the following contexts.

\begin{itemize}
\item We review Dumont-Thomas numeration ({\it cf.} \cite{DT:89}), which is a generalization of the well-known notion of beta-numeration, and view Rauzy fractals as the natural geometric objects related to this kind of numeration (called \emph{Dumont-Thomas subtiles} in this context).

\item We extend the geometric realization of a substitution and its dual which was first studied by Arnoux and Ito~\cite{AI:00} to the non-unit case and define Rauzy fractals as renormalized pieces of stepped hypersurfaces with $\mathfrak{p}$-adic factors (and call them $E_1^*$-\emph{subtiles} in this setting; see Figure~\ref{ifig2}).

\item We present Sing's~\cite{Sin:06} construction of Rauzy fractals via cut and project schemes and define them in terms of a graph directed iterated function system. In this framework Rauzy fractals occur as the \emph{dual prototiles} of the multi-component model set associated with this cut and project scheme.
\end{itemize}

We show how these different approaches are related, provide conjugacies of the underlying mappings, and prove that the respective Rauzy fractals are all the same (up to affine transformations).

In the second part of this paper we establish geometric and topological properties of  Rauzy fractals, some of which occur in Sing's Thesis \cite{Sin:06} in the context of model sets, some of them are new. In particular, carrying over the model set definition of Rauzy fractals to Dumont-Thomas numeration, we prove that Rauzy fractals can be regarded as the solution of a graph directed set equation governed by the \emph{prefix automaton} of $\sigma$. This set equation provides a natural subdivision of the subtiles of a Rauzy fractal and highlights its self-affine structure that is inherited from the underlying substitution. We discuss how Rauzy fractals are related to certain subshifts defined in terms of periodic points of the substitution $\sigma$ and relate adic transformations to domain exchanges of subpieces of Rauzy fractals.

We show that Rauzy fractals always admit a multiple tiling of the representation space $K_\sigma$. Moreover, extending results of \cite{ABBS:08} on non-unit beta numeration we prove a tiling criterion for Rauzy fractals. In particular, we show that Rauzy fractals admit a tiling of the representation space provided that the representations of the underlying Dumont-Thomas numeration obey a certain finiteness condition which is an extension of the well-known property (F) of beta-numeration (see \cite{FS:92}). An example for such a tiling is depicted in Figure~\ref{ifig}.

We illustrate the results of our paper with examples for a two as well as a three letter substitution.

\begin{figure}[h]
\includegraphics[scale=0.35]{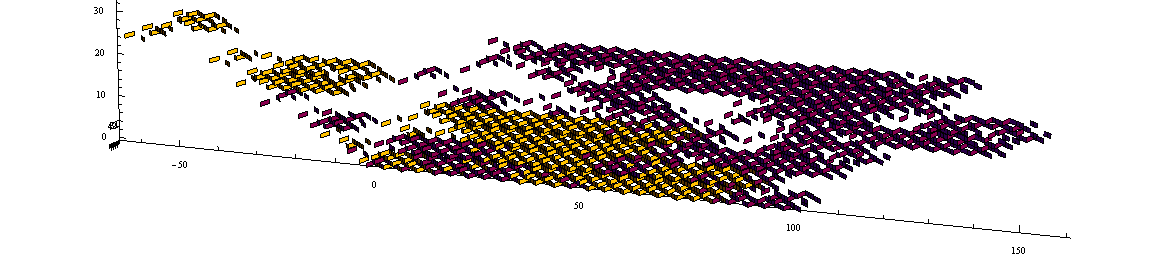}
\caption{$T_{\rm ext}^{-5}(\mathcal{U})$ for the substitution $\sigma(1)=2121^3$, $\sigma(2)=12$. \label{ifig2}}
\end{figure}

\end{section}

\begin{section}{Preliminaries}

\subsection{Pisot substitutions}
Let $\mathcal{A}=\{1,2,\ldots,n\}$ be a finite alphabet, and denote by $\mathcal{A}^*$ the set of finite words over $\mathcal{A}$. The set $\mathcal{A}^*$ endowed with the concatenation of words is a free monoid with the empty word $\epsilon$ as identity element. Given $w\in\mathcal{A}^*$ and $a\in\mathcal{A}$, let $\abs{w}$ be the length of the finite word $w$, $\abs{w}_a$ be the number of occurrences of $a$ in $w$. 
We denote by $\mathcal{A}^\omega$ the set of {\it(right) infinite words} and by $\phantom{}^\omega\mathcal{A}$ the set of {\it left-infinite words} over $\mathcal{A}$. The topology on $\mathcal{A}^\omega$ is the product topology of the discrete topology on $\mathcal{A}$. This implies that $\mathcal{A}^\omega$ is a compact Cantor set. A \emph{bi-infinite word} over $\mathcal{A}$ is a two-sided sequence in $\mathcal{A}^\mathbb{Z}$. We can equip $\mathcal{A}^\mathbb{Z}$ with a topology in an analogous way as we did for $\mathcal{A}^\omega$. A right or bi-infinite word $u$ is \emph{purely periodic} if there exists $v\in\mathcal{A}^*\setminus\{\epsilon\}$ such that $u=v^\omega$. Recall that  $u$ is \emph{uniformly recurrent} if every word occurring in $u$ occurs in an infinite number of positions with bounded gaps.

A \emph{substitution} is an endomorphism of the free monoid $\mathcal{A}^*$ with the condition that the image of each letter is non-empty and, for at least one letter $a\in\mathcal{A}$, $\abs{\sigma^k(a)}\rightarrow\infty$. A substitution naturally extends to the set of infinite and bi-infinite sequences. A one-sided (two-sided) \emph{periodic point} of $\sigma$ is an infinite (bi-infinite) word $u$ that satisfies $\sigma^k(u)=u$, for some $k>0$. If $k=1$, then $u$ is called \emph{fixed point} of $\sigma$.

We can naturally associate with a substitution $\sigma$ an \emph{incidence matrix} $M_\sigma$ with entries $(M_\sigma)_{a,b}=\abs{\sigma(b)}_a$, for all $a,b\in\mathcal{A}$.
The map $\V{P}:\mathcal{A}^*\rightarrow \N^n$, $w\mapsto (\abs{w}_1,\ldots,\abs{w}_n)^t$ is called the \emph{abelianisation map}. Obviously, we have $M_\sigma \circ \V{P} = \V{P} \circ \sigma$.  A substitution is \emph{primitive} if $M_\sigma$ is a primitive matrix. Every primitive substitution $\sigma$ has at least one periodic point and without loss of generality we can assume that $\sigma$ has at least one fixed point. Indeed, if $k$ is the period length then we may just work with $\sigma^k$ instead of $\sigma$.

The \emph{prefix automaton} associated to the substitution $\sigma$ is the directed graph with set of vertices $\mathcal{A}$ and set of labeled edges $a\xrightarrow{p}b$ if there exist $p,s\in\mathcal{A}^*$ such that $\sigma(a)=pbs$. 

Recall that an algebraic integer $\alpha>1$ is a \emph{Pisot number} if all its algebraic conjugates $\alpha'$ other than $\alpha$ itself satisfy $\abs{\alpha'}<1$. A substitution $\sigma$ is a \emph{Pisot substitution} if the dominant eigenvalue of $M_\sigma$ is a Pisot number. Furthermore we say that a Pisot substitution is \emph{unit} if $\alpha$ is a unit, otherwise we call it \emph{non-unit}. We say that a primitive substitution $\sigma$ is \emph{irreducible} if the characteristic polynomial of $M_\sigma$ is irreducible over $\mathbb{Q}$.
It is not hard to show that each irreducible Pisot substitution is primitive (see \emph{e.g.} \cite{CS:01}).

We introduce the following important combinatorial condition on substitutions.

\begin{definition}\label{defscc}
A substitution $\sigma$ over the alphabet $\mathcal{A}$ satisfies the \emph{strong coincidence condition} if for every pair $(b_1,b_2)\in\mathcal{A}^2$, there
exists $k\in\mathbb{N}$ and $a\in\mathcal{A}$ such that $\sigma^k(b_1)=p_1 a s_1$ and $\sigma^k(b_2)=p_2 a s_2$ with $\V{P}(p_1)=\V{P}(p_2)$ or $\V{P}(s_1)=\V{P}(s_2)$.
\end{definition}

\subsection{Substitution dynamical systems}

Recall that the \emph{two-sided shift} $S:\mathcal{A}^\mathbb{Z}\rightarrow \mathcal{A}^\mathbb{Z}$ is a homeomorphism on $\mathcal{A}^\mathbb{Z}$ and let $u \in \mathcal{A}^\mathbb{Z}$. The set $X_u:=\overline{\{S^j u : j\in\mathbb{N}\}}$
is a shift space, and we call it the \emph{symbolic dynamical system associated with} $u$.
If $\sigma$ is a primitive substitution, then all $\sigma$-periodic words are uniformly recurrent, and thus have the same language. The \emph{symbolic dynamical system associated with} $\sigma$ is the system associated with any of these. We will denote it by $(X_\sigma,S)$.
It is known that $(X_\sigma,S)$ is minimal and uniquely ergodic (see \cite{PF:02,Qu:10}).
Every word in $X_\sigma$ has a unique decomposition $w=S^k(\sigma(v))$, with $v\in X_\sigma$ and $0\leq k<\abs{\sigma(v_0)}$. This means that any word in $X_\sigma$ can be uniquely written in the form
\[ w=\ldots\mid \underbrace{\cdots}_{\sigma(v_{-1})}\mid
\underbrace{w_{-k}\cdots w_{-1}.w_0\cdots w_l}_{\sigma(v_0)}\mid
\underbrace{\cdots}_{\sigma(v_{1})}\mid  \underbrace{\cdots}_{\sigma(v_{2})}\mid
\cdots \]
with $\cdots v_{-1}v_0v_1\cdots \in\ X_\sigma$. Let $p=w_{-k}\cdots w_{-1}$ the prefix of $\sigma(v_0)$ of length $k$ and let $s=w_1\cdots w_l$ its suffix. The word $w$ is completely defined by the word $v$ and the decomposition of $\sigma(v_0)$ of the form $pw_0s$. Let $\mathcal{P}$ be the finite set of all such decompositions, {\it i.e.},
\begin{equation}\label{Pe}
\mathcal{P}=\{(p,a,s)\in \mathcal{A}^*\times \mathcal{A}\times \mathcal{A}^* : \exists\;b\in \mathcal{A}, \sigma(b)=pas\}\text{.}
\end{equation}

We can define a \emph{desubstitution map} $\chi$ on $X_\sigma$ (which sends $w$ to $v$), and a partition map $\rho$ from $X_\sigma$ to $\mathcal{P}$, corresponding to the decomposition of $\sigma(v_0)$. These two maps are continuous.
\begin{align*}
\chi:X_\sigma\rightarrow X_\sigma, w\mapsto v\quad & \text{such that }w=S^k\sigma(v), \text{and }0\leq k<\abs{\sigma(v_0)}\text{,} \\
\rho:X_\sigma\rightarrow \mathcal{P}, w\mapsto (p,w_0,s)\quad & \text{such that }\sigma(v_0)=pw_0s, \text{and }k=\abs{p}\text{.}
\end{align*}

Let $X_\mathcal{P}^l$ be the set of left-infinite sequences $(p_i,a_i,s_i)_{i\geq 0}=\cdots(p_1,a_1,s_1)(p_0,a_0,s_0)\in\phantom{}^\omega\mathcal{P}$ such that $\sigma(a_{i+1})=p_ia_is_i$, for all $i\geq 0$. The subshift $X_\mathcal{P}^l$ is sofic. The \emph{prefix-suffix development} is the map $E_\mathcal{P}:X_\sigma \rightarrow X_\mathcal{P}^l$ defined by
$E_\mathcal{P}(w)=(\rho(\chi^i(w)))_{i\geq 0}=(p_i,a_i,s_i)_{i\geq 0}$. If an infinite number of prefixes and suffixes are non-empty then we have the combinatorial expansion
$w=\lim_{k\rightarrow\infty}\sigma^k(p_k)\cdots \sigma(p_1)p_0.w_0s_0\sigma(s_1)\cdots\sigma^k(s_k)$, where the triples $(p_i,a_i,s_i)$ play the role of digits. It is shown in \cite{CS:01a} that the map $E_\mathcal{P}$ is continuous and onto $X_\mathcal{P}^l$, and it is one-to-one except on the orbit of periodic points of $\sigma$, where it is $k$-to-one with $k > 1$. 

If we project each of the $(p_i,a_i,s_i)$ of an element of $X_\mathcal{P}^l$ on the first component we obtain the labels of a left-infinite walk in the prefix automaton of the substitution $\sigma$. We will see in Section~\ref{sec:DT} that the hierarchical desubstitution structure of $X_\sigma$ is reflected in a natural way in the theory of Dumont-Thomas expansions.

\subsection{Algebraic framework}
We recall some notions from algebraic number theory. For more details we refer {\it e.g.} to \cite{N:99}.

Let $K$ be a number field and $\mathcal{O}$ its ring of integers. A \emph{prime} (or \emph{place}) $\mathfrak{p}$ is a class of equivalent valuations of $K$. An Archimedean equivalence class is called an infinite prime and it will be denoted by $\mathfrak{p}\mid\infty$, while a non-Archimedean equivalence class is called a finite prime, denoted by $\mathfrak{p}\nmid\infty$. The set of all infinite primes will be denoted by $S_\infty$. The infinite primes $\mathfrak{p}\mid\infty$ are obtained from the Galois embeddings $\tau:K\rightarrow\C$, and we define $\abs{\;\cdot\;}_\mathfrak{p}:K\rightarrow\R$ by $\abs{x}_\mathfrak{p}=\abs{\tau x}$ if $\mathfrak{p}$ is real, $\abs{x}_\mathfrak{p}=\abs{\tau x}^2$ if $\mathfrak{p}$ is complex. If $\mathfrak{p}$ is finite, we define $\abs{\;\cdot\;}_\mathfrak{p}:K\rightarrow\R$ by $\abs{x}_\mathfrak{p}=\mathfrak{N}(\mathfrak{p})^{-v_\mathfrak{p}(x)}$, where $\mathfrak{N}(\mathfrak{p})=p^{f_{\mathfrak{p}\mid (p)}}$ is the norm of the ideal $\mathfrak{p}$ lying over $(p)$, $f_{\mathfrak{p}\mid (p)}$ its inertia degree, $v_\mathfrak{p}:K^*\rightarrow\Z$ the $\mathfrak{p}$-adic valuation, defined by the unique prime ideal factorization
\[
(x):=x\mathcal{O}=\prod_\mathfrak{p}\mathfrak{p}^{v_\mathfrak{p}(x)}\text{.}
\]
Note that $v_\p(x)=0$ for almost all $\p$. Furthermore, given any $x\in K^*$, we have the important product formula \begin{equation}\label{product} \prod_{\p}\abs{x}_\mathfrak{p}=1\text{.} \end{equation}

We write $K_\mathfrak{p}$ for the completion of $K$ with respect to $\abs{\;\cdot\;}_\mathfrak{p}$. Denote by $\mathcal{O}_\mathfrak{p}$ the ring of integers of the completion $K_\mathfrak{p}$, \emph{i.e.}, the valuation ring
\[
\mathcal{O}_\mathfrak{p}=\{x\in K_\mathfrak{p}: v_\mathfrak{p}(x)\geq 0\}\text{.}
\]
By Ostrowski's theorem every $K_\mathfrak{p}$ is either isomorphic to $\R$ or $\C$, for infinite primes $\p$, or to a finite extension of $\Q_p$, for finite primes $\p$. Furthermore, for $\p$ finite, we can express any element of $K_\p$ as $\sum_{i=m}^\infty d_i\nu^i$, where $m\in\Z$, $\nu$ is a uniformiser, \emph{i.e.}, $v_\mathfrak{p}(\nu)=1$, and the $d_i$ are taken in a fixed system of representatives of the residue class field $\mathcal{O}_\p/\p\mathcal{O}_\p$.

For $\mathfrak{p}\mid\infty$ we equip $K_\mathfrak{p}$ with the real Lebesgue measure in case $\mathfrak{p}$ is real and with the complex Lebesgue measure otherwise. If $\mathfrak{p}\nmid\infty$ we equip $K_\mathfrak{p}$ with the Haar measure $\mu_\mathfrak{p}(x+\mathfrak{p}^k)=\mathfrak{N}(\mathfrak{p})^{-k}$.
We know (see for instance \cite[Proposition~2, Chapter~II]{Se:79}) that for every measurable subset $M$ of a local field $K_\mathfrak{p}$ and for every $x\in K_\mathfrak{p}$, 
\begin{equation}\label{serre}
\mu_\mathfrak{p}(x\cdot M)=\abs{x}_\mathfrak{p}\mu_\mathfrak{p}(M)\text{.}
\end{equation}
Consider the ad\`ele ring $\mathbb{A}_K$ of $K$ and the open subrings
\[ \mathbb{A}_{K,S}=\prod_{\mathfrak{p}\in S}K_\mathfrak{p}\times\prod_{\mathfrak{p}\notin S}\mathcal{O}_\mathfrak{p}\text{,} \]
for some finite set of primes $S$ containing $S_\infty$. Let $\mathcal{O}_S=\{x\in K:\abs{x}_\mathfrak{p}\leq 1\text{ for all }\mathfrak{p}\notin S\}$
be the set of $S$-integers. We know that
$\mathbb{A}_K=K+\mathbb{A}_{K,S_\infty}$ and $K\cap\mathbb{A}_{S_\infty}=\mathcal{O}_{S_\infty}$.
Moreover, $K$ is a discrete, co-compact subgroup of $\mathbb{A}_K$.

\end{section}

\begin{section}{The representation space}
Let $\sigma$ be an irreducible Pisot substitution with incidence matrix $M_\sigma$ and Pisot root $\alpha$.
Consider the number field $K=\Q(\alpha)$ of degree $n$ and signature $(r,s)$, and let $K_\infty=K \otimes_\Q \R\cong\R^r\times\C^s$.
Define the locally compact ring
\[ K_\alpha=K_\infty \times \prod_{\mathfrak{p}\mid (\alpha)}K_\mathfrak{p}=
\prod_{\mathfrak{p}\in S_\alpha}K_\mathfrak{p}\text{,}\]
where $S_\alpha=\{\mathfrak{p}:\mathfrak{p}\mid\infty\text{ or }\mathfrak{p}\mid (\alpha) \}$. We have injective ring homomorphisms $\Phi$ and $\Phi_\infty $ which diagonally embed $K$ into $K_\alpha$ and $K_\infty$, respectively. More precisely
\begin{align*}
\Phi:K &\longrightarrow K_\alpha\text{, }\quad \xi \phantom{a} \longmapsto
\prod_{\mathfrak{p}\in S_\alpha}\xi\text{,} \\
\Phi_\infty:K &\longrightarrow K_\infty\text{, }\quad \xi\phantom{a} \longmapsto
\prod_{\mathfrak{p}\in S_\infty}\xi\text{.}
\end{align*}
Note that $K$ acts multiplicatively on $K_\alpha$ by
$\xi\cdot(z_{\mathfrak{p}})_{\mathfrak{p}\in S_\alpha}=(\xi z_{\mathfrak{p}})_{\mathfrak{p}\in S_\alpha}$, for $\xi\in K$.

The \emph{representation space} is defined as
\[
K_\sigma=\prod_{\p\in S_\alpha\setminus\{\p_1\}}K_\p\text{,}
\]
where $\p_1$ is the infinite prime satisfying $\abs{\alpha}_{\p_1}=\alpha$. We equip $K_\alpha$ and $K_\sigma$ with the product of the metrics induced by $\abs{\;\cdot\;}_\p$ and with the product measure $\mu_{K_\alpha}$, respectively $\mu$, of the measures $\mu_\mathfrak{p}$, for $\mathfrak{p}\in S_\alpha$, respectively $\mathfrak{p}\in S_\alpha\setminus\{\p_1\}$. Notice that multiplication by $\alpha$ acts as a uniform contraction in $K_\sigma$. Let $\pi:K_\alpha\rightarrow K_\sigma$ be the projection which modules out the $\p_1$-coordinate, and let $\pi_{\p_1}:K_\alpha\to K_{\p_1}$ be the projection defined by $\pi_{\p_1}((z_\p)_\p)=z_{\p_1}$. We will denote by $\Phi'=\pi\circ\Phi$ the diagonal embedding of $K$ into $K_\sigma$. 

In case the substitution $\sigma$ is unit the representation space will consist only of Archimedean completions, because in this case $(\alpha)=\mathcal{O}$ and there is no finite prime $\p$ satisfying $\p\mid(\alpha)$.

\begin{lemma}
Let $M\subset K_\sigma$ be a measurable set. Then
\[ \mu(\alpha\cdot M)=\alpha^{-1}\cdot\mu(M)\text{.} \]
\end{lemma}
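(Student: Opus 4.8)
The plan is to reduce the claim to the scaling behavior of the Haar/Lebesgue measures on each local factor $K_\p$ via the product measure structure, and then to invoke the product formula~\eqref{product} to collapse the resulting product of local scaling factors. First I would write $\mu$ as the product measure $\bigotimes_{\p\in S_\alpha\setminus\{\p_1\}}\mu_\p$ and recall that multiplication by $\alpha$ acts componentwise, that is, $\alpha\cdot(z_\p)_\p=(\alpha z_\p)_\p$ under the diagonal action. Since the scaling law~\eqref{serre}, namely $\mu_\p(\alpha\cdot M)=\abs{\alpha}_\p\,\mu_\p(M)$, holds on every single factor (it is the Archimedean Jacobian relation on the real/complex places and Serre's normalization $\mu_\p(x+\p^k)=\mathfrak{N}(\p)^{-k}$ on the finite places), I would conclude that on a measurable rectangle $M=\prod_\p M_\p$ one has $\mu(\alpha\cdot M)=\prod_{\p\in S_\alpha\setminus\{\p_1\}}\abs{\alpha}_\p\cdot\mu(M)$.

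The heart of the computation is then purely arithmetic: I must show that the product $\prod_{\p\in S_\alpha\setminus\{\p_1\}}\abs{\alpha}_\p$ equals $\alpha^{-1}$. To see this I would start from the global product formula $\prod_{\text{all }\p}\abs{\alpha}_\p=1$ and split off the factors. The key observation is that $\abs{\alpha}_\p=1$ for every finite prime $\p\nmid(\alpha)$, since $v_\p(\alpha)=0$ there; hence only the places in $S_\alpha$ contribute nontrivially, and $\prod_{\p\in S_\alpha}\abs{\alpha}_\p=1$. Isolating the distinguished place gives $\prod_{\p\in S_\alpha\setminus\{\p_1\}}\abs{\alpha}_\p=\abs{\alpha}_{\p_1}^{-1}=\alpha^{-1}$, using the defining property $\abs{\alpha}_{\p_1}=\alpha$ of the representation space.

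To make the first step rigorous beyond rectangles, I would extend the identity from measurable boxes to arbitrary measurable $M\subset K_\sigma$ by a standard measure-theoretic argument: the map $M\mapsto\mu(\alpha\cdot M)$ and the map $M\mapsto\alpha^{-1}\mu(M)$ are both measures on $K_\sigma$ agreeing on the generating $\pi$-system of measurable rectangles, so they coincide by the uniqueness (Dynkin) theorem on the Borel $\sigma$-algebra, and then on all measurable sets by completion. The main obstacle, if any, is bookkeeping rather than conceptual: one must be careful that the Archimedean factors use the convention $\abs{x}_\p=\abs{\tau x}^2$ at the complex places, so that the single scalar $\alpha^{-1}$ already incorporates the square coming from the two-dimensional real Jacobian of multiplication by a complex number; this is exactly the normalization chosen in the algebraic-framework subsection, so it dovetails automatically with the product formula and requires no separate adjustment.
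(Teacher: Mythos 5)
Your proof is correct and follows essentially the same route as the paper: apply the scaling relation~\eqref{serre} factorwise to get $\mu(\alpha\cdot M)=\prod_{\p\in S_\alpha\setminus\{\p_1\}}\abs{\alpha}_\p\,\mu(M)$, then use the product formula~\eqref{product} together with $\abs{\alpha}_\p=1$ for finite $\p\nmid(\alpha)$ and $\abs{\alpha}_{\p_1}=\alpha$ to evaluate the product as $\alpha^{-1}$. The extra care you take with extending from rectangles to general measurable sets and with the complex-place normalization is sound but not needed beyond what the paper's two-line argument already asserts.
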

\begin{proof}
By (\ref{serre}) we obtain
\[  \mu(\alpha\cdot M)=\prod_{\mathfrak{p}\in S_\alpha\setminus\{\p_1\}}\abs{\alpha}_\mathfrak{p}\cdot\mu(M)\text{,} \]
and using the product formula (\ref{product}) we easily deduce that $\prod_{\mathfrak{p}\in S_\alpha\setminus\{\p_1\}}\abs{\alpha}_\mathfrak{p}=\alpha^{-1}$.
\end{proof}

Recall that a subset $W\subset K_\alpha$ is called a \emph{Delone set} if it is \emph{uniformly discrete} and \emph{relatively dense}. This means that there are radii $r,R>0$ so that each ball of radius $r$ (respectively $R$) contains at most (respectively at least) one point of $W$.

\begin{lemma}\label{Del}
The set $\Phi(\mathcal{O}_{S_\alpha})$ is a Delone set in $K_\alpha$.
\end{lemma}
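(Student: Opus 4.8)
The plan is to prove that $\Phi(\mathcal{O}_{S_\alpha})$ is a \emph{discrete} and \emph{cocompact} subgroup of the locally compact abelian group $K_\alpha$. Since $K_\alpha$ carries a translation-invariant metric, discreteness of a subgroup yields uniform discreteness and cocompactness yields relative density; these two facts together give exactly the Delone property. The natural way to obtain them is to realize $\mathcal{O}_{S_\alpha}$ as a lattice inside the ad\`ele ring $\mathbb{A}_K$ and then project down to $K_\alpha$.

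First I would promote the facts stated for $S_\infty$ to $S_\alpha$. As $S_\infty\subseteq S_\alpha$ we have $\mathbb{A}_{K,S_\infty}\subseteq\mathbb{A}_{K,S_\alpha}$, hence $\mathbb{A}_K=K+\mathbb{A}_{K,S_\infty}=K+\mathbb{A}_{K,S_\alpha}$, and unwinding the definition of the $S_\alpha$-integers gives $K\cap\mathbb{A}_{K,S_\alpha}=\mathcal{O}_{S_\alpha}$. Because $K$ is discrete in $\mathbb{A}_K$ and $\mathbb{A}_{K,S_\alpha}$ is an open subring, the intersection $\mathcal{O}_{S_\alpha}$ is discrete in $\mathbb{A}_{K,S_\alpha}$; moreover the openness of $\mathbb{A}_{K,S_\alpha}$ turns the natural continuous bijection coming from the second isomorphism theorem,
\[\mathbb{A}_{K,S_\alpha}/\mathcal{O}_{S_\alpha}\cong(K+\mathbb{A}_{K,S_\alpha})/K=\mathbb{A}_K/K\text{,}\]
into a homeomorphism, so that $\mathbb{A}_{K,S_\alpha}/\mathcal{O}_{S_\alpha}$ is compact by cocompactness of $K$. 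Thus $\mathcal{O}_{S_\alpha}$ is a discrete cocompact subgroup of $\mathbb{A}_{K,S_\alpha}$.

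Next I would split off the compact non-Archimedean part. By definition $\mathbb{A}_{K,S_\alpha}=K_\alpha\times C$ with $C=\prod_{\mathfrak{p}\notin S_\alpha}\mathcal{O}_\mathfrak{p}$, which is compact as a product of the compact rings $\mathcal{O}_\mathfrak{p}$. Writing the diagonal embedding of $x\in\mathcal{O}_{S_\alpha}$ into $\mathbb{A}_{K,S_\alpha}$ as $(\Phi(x),\delta(x))$, the coordinate projection $p:\mathbb{A}_{K,S_\alpha}\to K_\alpha$ sends the lattice $\mathcal{O}_{S_\alpha}$ onto $\Phi(\mathcal{O}_{S_\alpha})$, injectively since $\Phi$ is injective. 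Compactness of $C$ is exactly what lets this projection preserve both lattice properties. For discreteness: if $\Phi(x_n)\to z$ in $K_\alpha$ with the $x_n\in\mathcal{O}_{S_\alpha}$, then compactness of $C$ lets us pass to a subsequence along which $\delta(x_n)$ converges, so that $(\Phi(x_n),\delta(x_n))$ converges in $\mathbb{A}_{K,S_\alpha}$; since $\mathcal{O}_{S_\alpha}$ is discrete and closed there, the $x_n$ are eventually constant, ruling out accumulation points of $\Phi(\mathcal{O}_{S_\alpha})$. For cocompactness: $p$ descends to a continuous surjection $\mathbb{A}_{K,S_\alpha}/\mathcal{O}_{S_\alpha}\to K_\alpha/\Phi(\mathcal{O}_{S_\alpha})$, so the target is a continuous image of a compact space and hence compact.

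Finally, a discrete subgroup of the translation-invariantly metrized group $K_\alpha$ is uniformly discrete — choosing $r>0$ with $B_{2r}(0)\cap\Phi(\mathcal{O}_{S_\alpha})=\{0\}$ forces distinct points to lie at distance at least $2r$ — while a cocompact subgroup is relatively dense, since a compact set $F$ with $F+\Phi(\mathcal{O}_{S_\alpha})=K_\alpha$ sits inside some ball of radius $R$. Together these two properties are precisely the Delone condition. I expect the only genuinely delicate point to be the preservation of discreteness under $p$: projecting a lattice usually destroys discreteness, and the argument rests entirely on the compactness of the finite-place factor $C$, which absorbs exactly the places discarded in passing from $\mathbb{A}_{K,S_\alpha}$ down to $K_\alpha$.
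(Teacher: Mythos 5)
Your proof is correct and follows essentially the same route as the paper's: identifying $\mathcal{O}_{S_\alpha}=K\cap\mathbb{A}_{K,S_\alpha}$, inheriting uniform discreteness from the discreteness of $K$ in $\mathbb{A}_K$, and deducing relative denseness from the compactness of the clopen subgroup $\mathbb{A}_{K,S_\alpha}/\mathcal{O}_{S_\alpha}$ of $\mathbb{A}_K/K$ together with the quotient map onto $K_\alpha/\Phi(\mathcal{O}_{S_\alpha})$. The only difference is that you spell out the step the paper dispatches with ``likewise'', namely that discreteness survives the projection to $K_\alpha$ because the discarded factor $\prod_{\mathfrak{p}\notin S_\alpha}\mathcal{O}_\mathfrak{p}$ is compact.
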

\begin{proof}
The subring $\mathbb{A}_{K,S_\alpha}$ intersects the uniformly discrete subring $K$ in $\mathcal{O}_{S_\alpha}$, so $\Phi(\mathcal{O}_{S_\alpha})$ is likewise uniformly discrete in the closed subring $K_\alpha$. In order to show the relative denseness, note that $\mathbb{A}_{K,S_\alpha}$ is clopen in $\mathbb{A}_K$, so $\mathbb{A}_{K,S_\alpha}/\mathcal{O}_{S_\alpha}$ (with its quotient topology) is a clopen
subgroup of the compact group $\mathbb{A}_K/K$ and, hence, is compact. As $K_\alpha/\Phi(\mathcal{O}_{S_\alpha})$ is a
quotient of $\mathbb{A}_{K,S_\alpha}/\mathcal{O}_{S_\alpha}$, it is also compact.
\end{proof}

Let $\textbf{v}_\alpha=(v_1,\ldots,v_n)$ be a left eigenvector of $M_\sigma$ associated to $\alpha$, assume that $\textbf{v}_\alpha$ is scaled in a way that $v_i \in \mathbb{Q}(\alpha)$, and consider the $\Z$-module $V=\langle v_1,\ldots,v_n\rangle_\Z$.

\begin{lemma}
$V$ is a free $\mathbb{Z}$-module of rank $n$. Consequently, the numbers $v_i$ are rationally independent.
\end{lemma}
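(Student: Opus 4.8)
The plan is to show that $V=\langle v_1,\ldots,v_n\rangle_\Z$ is free of rank $n$ by establishing that the eigenvector coordinates $v_1,\ldots,v_n$ are linearly independent over $\Q$. Since $V$ is a finitely generated torsion-free $\Z$-module, it is automatically free, so the only real content is that its rank equals $n$, which is equivalent to $\Q$-linear independence of the $v_i$. Thus the statement "$V$ is free of rank $n$" and the statement "the $v_i$ are rationally independent" are two sides of the same coin, and I would prove the rational independence directly; the freeness then follows from the structure theorem for modules over a PID together with torsion-freeness.

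To prove rational independence, I would exploit irreducibility of $\sigma$. First I would note that $\V{v}_\alpha$ is a left eigenvector of $M_\sigma$ for the eigenvalue $\alpha$, so $\V{v}_\alpha M_\sigma = \alpha \V{v}_\alpha$. Since $\sigma$ is irreducible, the characteristic polynomial of $M_\sigma$ is irreducible over $\Q$ of degree $n$, hence $M_\sigma$ has $n$ distinct eigenvalues and in particular $\alpha$ is a simple eigenvalue with a one-dimensional eigenspace. The key algebraic input is that $M_\sigma$ has entries in $\Z\subset\Q$, and its minimal polynomial over $\Q$ coincides with its (irreducible) characteristic polynomial, so $M_\sigma$ is a \emph{cyclic} (non-derogatory) matrix. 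I would then argue that a nonzero eigenvector of a cyclic integer matrix for an eigenvalue generating a degree-$n$ field must have $\Q$-linearly independent coordinates.

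The cleanest way to make this precise is as follows. Suppose for contradiction that $\sum_{i} c_i v_i = 0$ for some $(c_1,\ldots,c_n)\in\Q^n\setminus\{0\}$, i.e. $\V{v}_\alpha \V{c}=0$ with $\V{c}=(c_1,\ldots,c_n)^t$. Applying the eigenvector relation repeatedly, $\V{v}_\alpha M_\sigma^k = \alpha^k \V{v}_\alpha$ for all $k\ge 0$. Hence $\V{v}_\alpha M_\sigma^k \V{c} = \alpha^k(\V{v}_\alpha\V{c})=0$ for every $k$. This says $\V{v}_\alpha$ annihilates the $\Q$-subspace $W\subseteq\Q^n$ spanned by $\{M_\sigma^k\V{c}:k\ge 0\}$, which is the smallest $M_\sigma$-invariant subspace containing $\V{c}$. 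Because $M_\sigma$ is non-derogatory, its restriction to $W$ has minimal polynomial equal to a divisor of the irreducible characteristic polynomial; since $\V{c}\ne 0$ this divisor is the full characteristic polynomial, forcing $\dim_\Q W = n$, i.e. $W=\Q^n$. But then $\V{v}_\alpha$ annihilates all of $\Q^n$, contradicting $\V{v}_\alpha\ne 0$. Therefore no nontrivial rational relation exists and the $v_i$ are rationally independent.

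The main obstacle I anticipate is justifying that $W=\Q^n$, i.e. that the cyclic subspace generated by the rational relation vector $\V{c}$ is the whole space; this is exactly where irreducibility of the characteristic polynomial is indispensable, as it guarantees $M_\sigma$ is non-derogatory so that \emph{every} nonzero vector is cyclic. A slightly different but equivalent route avoids cyclic-subspace language: view $\V{v}_\alpha$ as the image of a fixed $\Q$-basis of $K=\Q(\alpha)$ under the regular representation, so that $\Q$-independence of the $v_i$ reduces to the statement that $1,\alpha,\ldots,\alpha^{n-1}$ form a $\Q$-basis of $K$, which holds precisely because $[\Q(\alpha):\Q]=n$. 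Either way the crux is transferring the degree-$n$ algebraic information about $\alpha$ into a full-rank statement about the integer vector $\V{v}_\alpha$, and once rational independence is secured, freeness of rank $n$ is immediate.
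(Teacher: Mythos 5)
Your proof is correct, but it follows a genuinely different route from the paper's. You prove the rational independence of the $v_i$ head-on by a cyclic-vector argument in $\Q^n$: a nontrivial relation $\V{v}_\alpha\V{c}=0$ propagates along the orbit $\{M_\sigma^k\V{c}\}_{k\ge 0}$, and since the characteristic polynomial of $M_\sigma$ is irreducible (hence equals the minimal polynomial), every nonzero rational vector is cyclic, so the relation would force $\V{v}_\alpha$ to annihilate all of $\Q^n$, contradicting $\V{v}_\alpha\neq\mathbf{0}$. The paper instead works inside $\Q(\alpha)$: from the left-eigenvector relation it first deduces $\alpha V\subset V$, picks a nonzero coordinate $v_j$, and notes that $v_j,\alpha v_j,\ldots,\alpha^{n-1}v_j$ are $n$ elements of $V$ that are $\Q$-linearly independent because $\alpha$ has degree $n$; since $V$ is generated by $n$ elements its rank is at most $n$, hence exactly $n$, and the independence of the $v_i$ follows as your ``consequently''. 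Both arguments rest on the same two inputs (irreducibility and $\V{v}_\alpha\neq\mathbf{0}$), but the paper's version is shorter and has the side benefit of establishing $\alpha V\subset V$ explicitly, a fact that is reused immediately afterwards to show that $V\cdot\Z[\alpha^{-1}]$ is a fractional ideal of $\Z[\alpha^{-1}]$; your version isolates the purely linear-algebraic content (an irreducible characteristic polynomial leaves no proper nonzero invariant subspace). The alternative you sketch at the end, reducing everything to $1,\alpha,\ldots,\alpha^{n-1}$ being a $\Q$-basis of $K$, is the one closest in spirit to what the paper actually does.
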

\begin{proof}
As $\V{v}_\alpha$ is an eigenvector, we get that $\alpha V\subset V$. Moreover,  $\V{v}_\alpha\not=\mathbf{0}$ which implies that $V\neq \{0\}$. Thus, since $\alpha$ is irrational of degree $n$, the elements  $v_j,\alpha v_j,\ldots,\alpha^{n-1}v_j \in V$ are linearly independent over $\mathbb{Q}$. Therefore
$\langle v_j,\alpha v_j,\ldots,\alpha^{n-1}v_j\rangle_\Z \subset V$
and, hence, $V$ has rank $n$.
\end{proof}

Since $V$ is an abelian group and $\alpha V\subset V$ we have that $V$ is a finitely generated $\Z[\alpha^{-1}]$-module.  As $v_i \in \mathbb{Q}(\alpha)$ there is $q\in\Z$ such that $v_i\in q^{-1}\Z[\alpha]$ holds for each $i\in\{1,\ldots,n\}$. Thus each generator of $V$ is taken in $q^{-1}\Z[\alpha]$ and, hence, $V\cdot\Z[\alpha^{-1}]$ is a (fractional) ideal of the ring $\Z[\alpha^{-1}]$.

\begin{lemma}\label{finind}
The following assertions hold:
\begin{enumerate}
\item $\mathcal{O}_{S_\alpha}=\mathcal{O}[\alpha^{-1}]$.
\item $V\cdot\Z[\alpha^{-1}]$ is a subgroup of finite index of $\mathcal{O}_{S_\alpha}$.
\end{enumerate}
\end{lemma}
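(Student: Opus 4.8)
The plan is to establish both assertions by working with the explicit prime factorization of $(\alpha)$ and the localization it induces. For part (1), I would show the two inclusions. The ring $\mathcal{O}[\alpha^{-1}]$ consists of elements $x\mathcal{O}$-linear combinations of powers $\alpha^{-k}$; since $v_\mathfrak{p}(\alpha^{-1})=-v_\mathfrak{p}(\alpha)\geq 0$ for every finite prime $\mathfrak{p}\nmid(\alpha)$, any such element has $\abs{x}_\mathfrak{p}\leq 1$ for all $\mathfrak{p}\notin S_\alpha$, giving $\mathcal{O}[\alpha^{-1}]\subset\mathcal{O}_{S_\alpha}$. Conversely, if $x\in\mathcal{O}_{S_\alpha}$, then $v_\mathfrak{p}(x)\geq 0$ for all finite $\mathfrak{p}\notin S_\alpha$, so the only primes where $x$ can have a pole lie over $(\alpha)$; multiplying $x$ by a sufficiently high power $\alpha^k$ clears all these denominators because $v_\mathfrak{p}(\alpha)>0$ precisely at the primes dividing $(\alpha)$, yielding $\alpha^k x\in\mathcal{O}$ and hence $x\in\mathcal{O}[\alpha^{-1}]$.

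For part (2), the strategy is to pass to an ideal-theoretic comparison inside $\mathcal{O}[\alpha^{-1}]$. From the discussion preceding the lemma, $V\cdot\Z[\alpha^{-1}]$ is a fractional ideal of $\Z[\alpha^{-1}]$, and I would first upgrade this to a fractional ideal $I$ of the full ring $\mathcal{O}[\alpha^{-1}]=\mathcal{O}_{S_\alpha}$ by setting $I=(V\cdot\Z[\alpha^{-1}])\cdot\mathcal{O}_{S_\alpha}$. Since $\mathcal{O}_{S_\alpha}$ is a Dedekind domain (being a localization of the Dedekind domain $\mathcal{O}$), every nonzero fractional ideal is invertible and the quotient $\mathcal{O}_{S_\alpha}/I$ is finite whenever $I\subset\mathcal{O}_{S_\alpha}$ is a nonzero integral ideal, with index equal to the absolute norm $\mathfrak{N}(I)$. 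The key point is that $V\cdot\Z[\alpha^{-1}]$ already has finite index in $\mathcal{O}_{S_\alpha}$ as a \emph{group}: both are finitely generated $\Z[\alpha^{-1}]$-modules of the same rank $n$ over $\Z$ in the relevant sense, as $V$ has rank $n$ by the preceding lemma.

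The cleanest route is therefore to argue that $V\cdot\Z[\alpha^{-1}]$ and $\mathcal{O}_{S_\alpha}$ are two finitely generated $\Z[\alpha^{-1}]$-submodules of $K$ that span the same $\Q$-vector space, namely all of $K$, since $V$ contains a $\Q$-basis. Two finitely generated modules over the Noetherian domain $\Z[\alpha^{-1}]$ (or over $\mathcal{O}_{S_\alpha}$) which generate the same fraction field $K$ and each contain the other up to multiplication by a nonzero element must have finite index in one another; concretely there exist nonzero $a,b\in\Z[\alpha^{-1}]$ with $a\,\mathcal{O}_{S_\alpha}\subset V\cdot\Z[\alpha^{-1}]\subset b^{-1}\mathcal{O}_{S_\alpha}$, and the quotient $\mathcal{O}_{S_\alpha}/a\,\mathcal{O}_{S_\alpha}$ is finite by the norm computation above. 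The main obstacle I anticipate is the bookkeeping in identifying $V\cdot\Z[\alpha^{-1}]$ as a genuine fractional ideal containing a full-rank scaled copy of $\mathcal{O}_{S_\alpha}$: one must verify that $V$ contains $n$ rationally independent elements of $K$ (already supplied by the rank-$n$ lemma) and that scaling by the denominator $q$ introduced before the lemma keeps everything inside $\mathcal{O}_{S_\alpha}$. Once this is in place, finiteness of the index follows formally from the Dedekind structure of $\mathcal{O}_{S_\alpha}$ and the finiteness of residue fields.
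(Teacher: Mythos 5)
Your proof of part (1) is correct and follows the same route as the paper: one inclusion from $\alpha^{-1},\mathcal{O}\subseteq\mathcal{O}_{S_\alpha}$, the other by clearing denominators with a high power of $\alpha$ at the finitely many primes dividing $(\alpha)$.

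For part (2) your argument is also correct, but it takes a genuinely different route. Both proofs rest on the same sandwich, namely $m q^{-1}\mathcal{O}_{S_\alpha}\subseteq V\cdot\Z[\alpha^{-1}]\subseteq q^{-1}\mathcal{O}_{S_\alpha}$ (with $q$ the common denominator of the $v_i$ and $m$ the index of $V$ in $q^{-1}\mathcal{O}$), but they differ in how they show the outer quotient is finite. The paper stays elementary: it reduces to showing $[V\cdot\Z[\alpha^{-1}]:mV\cdot\Z[\alpha^{-1}]]\leq m^n$ by a pigeonhole argument, using that any finite set of elements of $V\cdot\Z[\alpha^{-1}]$ lies in some $V\langle 1,\alpha^{-1},\ldots,\alpha^{-l}\rangle_\Z$, a free $\Z$-module of rank at most $n$. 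You instead invoke the structure theory of rings of $S$-integers: $\mathcal{O}_{S_\alpha}$ is a Dedekind domain with finite residue fields, so $[\mathcal{O}_{S_\alpha}:m\mathcal{O}_{S_\alpha}]$ equals the $S$-norm of $(m)$ and is finite (at most $m^n$). Your route is shorter modulo these standard citations; the paper's is self-contained and avoids any appeal to ideal theory of $\mathcal{O}_{S_\alpha}$. Two cosmetic points: the detour through the fractional ideal $I=(V\cdot\Z[\alpha^{-1}])\cdot\mathcal{O}_{S_\alpha}$ is not needed (and $V\cdot\Z[\alpha^{-1}]$ itself is only a $\Z[\alpha^{-1}]$-module, not an $\mathcal{O}_{S_\alpha}$-module), since your final two-sided containment argument bypasses it; and the index of a nonzero ideal of $\mathcal{O}_{S_\alpha}$ is the $S$-norm $\prod_{\mathfrak{p}\notin S_\alpha}\mathfrak{N}(\mathfrak{p})^{v_\mathfrak{p}(I)}$ rather than the full absolute norm, though this does not affect finiteness. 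Note also that, as in the paper, what one really obtains is commensurability of $V\cdot\Z[\alpha^{-1}]$ with $\mathcal{O}_{S_\alpha}$ (finite index inside $q^{-1}\mathcal{O}_{S_\alpha}$), which is exactly what Lemma \ref{delalpha} requires.
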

\begin{proof}
Since $\alpha^{-1}\in\mathcal{O}_{S_\alpha}$ and $\mathcal{O}\subseteq\mathcal{O}_{S_\alpha}$ the inclusion $\mathcal{O}_{S_\alpha} \supseteq \mathcal{O}[\alpha^{-1}]$ follows. To prove the reverse inclusion, choose $x\in\mathcal{O}_{S_\alpha}$ and let $\mathfrak{p}\mid (\alpha)$. Then there exists $k\in\N$ such that $\abs{\alpha^k x}_\mathfrak{p}\leq 1$. Since $S_\alpha$ is a finite set of primes, setting
$h=\max\{k\in\N:\abs{\alpha^k x}_\mathfrak{p}\leq 1, \text{ for }\p\mid(\alpha) \}$ we get $\alpha^h x\in\mathcal{O}$, and, hence,  $\mathcal{O}_{S_\alpha} \subseteq \mathcal{O}[\alpha^{-1}]$.

$V$ is a subgroup of finite index of $q^{-1}\mathcal{O}$, for some $q\in\Z$, which implies that there exists $m\in\N$ such that $m q^{-1}\mathcal{O}\subseteq V$. Thus
$V\cdot\Z[\alpha^{-1}]\subseteq q^{-1}\mathcal{O}[\alpha^{-1}]\subseteq \frac{1}{m}V\cdot\Z[\alpha^{-1}]$ and it suffices to show that  $V\cdot\Z[\alpha^{-1}]$ is a subgroup of finite index of $\frac{1}{m}V\cdot\Z[\alpha^{-1}]$.
Suppose on the contrary that $mV\cdot\Z[\alpha^{-1}]$ is a subgroup of $V\cdot\Z[\alpha^{-1}]$ of infinite index, in particular $\abs{V\cdot\Z[\alpha^{-1}]/mV\cdot\Z[\alpha^{-1}]}>m^n$. Let $x_1,\ldots,x_{m^n+1}$ be $m^n+1$ pairwise different representatives of $V\cdot\Z[\alpha^{-1}]/mV\cdot\Z[\alpha^{-1}]$.
Since $x_1,\ldots,x_{m^n+1}\in V\cdot\Z[\alpha^{-1}]$, there exists $l\in\N$ such that
$x_1,\ldots,x_{m^n+1}\in V\langle 1,\alpha^{-1},\ldots,\alpha^{-l}\rangle_\Z$. As $V\langle 1,\alpha^{-1},\ldots,\alpha^{-l}\rangle_\Z$ is a $\Z$-module of rank at most $n$, $V\langle 1,\alpha^{-1},\ldots,\alpha^{-l}\rangle_\Z/mV\langle 1,\alpha^{-1},\ldots,\alpha^{-l}\rangle_\Z$ has index at most $m^n$, which implies that there exist $i,j\in\{1,\ldots,m^n+1\}$ such that $x_i\equiv x_j\bmod mV\langle 1,\alpha^{-1},\ldots,\alpha^{-l}\rangle_\Z$, contradicting $x_i\not\equiv x_j\bmod mV\cdot\Z[\alpha^{-1}]$.
\end{proof}

\begin{lemma}\label{delalpha}
$\Phi(V\cdot\Z[\alpha^{-1}])$ is a Delone set in $K_\alpha$.
\end{lemma}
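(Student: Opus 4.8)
The plan is to deduce the Delone property of $\Phi(V\cdot\Z[\alpha^{-1}])$ directly from the two facts already at our disposal: that $\Phi(\mathcal{O}_{S_\alpha})$ is a Delone set (Lemma~\ref{Del}), and that $V\cdot\Z[\alpha^{-1}]$ sits inside $\mathcal{O}_{S_\alpha}$ as a subgroup of finite index (Lemma~\ref{finind}). Since a Delone set is by definition uniformly discrete and relatively dense, I would verify these two properties separately, producing in each case the required radius from the corresponding radius for $\Phi(\mathcal{O}_{S_\alpha})$.

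Uniform discreteness comes essentially for free. As $V\cdot\Z[\alpha^{-1}]$ is a subgroup of $\mathcal{O}_{S_\alpha}$, we have the inclusion $\Phi(V\cdot\Z[\alpha^{-1}])\subseteq\Phi(\mathcal{O}_{S_\alpha})$, and any subset of a uniformly discrete set is uniformly discrete with the same lower radius $r$. Thus the first half requires no work beyond recording this inclusion.

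For relative denseness I would exploit the finite index $m=[\mathcal{O}_{S_\alpha}:V\cdot\Z[\alpha^{-1}]]$ furnished by Lemma~\ref{finind}. Choosing coset representatives $c_1,\ldots,c_m\in\mathcal{O}_{S_\alpha}$ and using that $\Phi$ is an additive homomorphism, we can write
\[ \Phi(\mathcal{O}_{S_\alpha})=\bigcup_{i=1}^m\bigl(\Phi(c_i)+\Phi(V\cdot\Z[\alpha^{-1}])\bigr)\text{.} \]
Now fix any $x\in K_\alpha$. Since $\Phi(\mathcal{O}_{S_\alpha})$ is relatively dense, say with radius $R$, there is a point $h\in\Phi(\mathcal{O}_{S_\alpha})$ with $d(x,h)\leq R$; writing $h=\Phi(c_i)+g$ with $g\in\Phi(V\cdot\Z[\alpha^{-1}])$ and using that the metric $d$ on $K_\alpha$ is translation invariant, we get $d(x,g)\leq d(x,h)+d(h,g)\leq R+d(0,\Phi(c_i))$. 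Setting $D=\max_{1\leq i\leq m}d(0,\Phi(c_i))$, every ball of radius $R+D$ then contains a point of $\Phi(V\cdot\Z[\alpha^{-1}])$, which is precisely relative denseness.

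This argument carries no serious obstacle; the only points requiring a little care are that the metric on $K_\alpha$, a product over the finitely many places in $S_\alpha$, is translation invariant—which holds because each factor metric derives from $\abs{\;\cdot\;}_\p$ and translation invariance is preserved under the product—and that the finiteness of $m$, guaranteed by Lemma~\ref{finind}, is exactly what keeps the constant $D$ finite. I therefore expect the writeup to consist of this bookkeeping rather than any genuine difficulty.
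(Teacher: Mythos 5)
Your proof is correct and follows exactly the route the paper intends: the paper's own proof consists of the single sentence that the claim ``follows immediately by Lemma~\ref{Del} and Lemma~\ref{finind}'', and your writeup simply supplies the standard details (uniform discreteness by inclusion, relative denseness via finitely many coset representatives and translation invariance) that the paper leaves implicit.
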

\begin{proof}
This follows immediately by Lemma \ref{Del} and Lemma \ref{finind}.
\end{proof}
Thus $\Phi(V\cdot\Z[\alpha^{-1}])$ is a discrete subgroup of $K_\alpha$ and, hence, a lattice.
We look now for a fundamental domain of $K_\alpha$ modulo the lattice $\Phi(V\cdot\Z[\alpha^{-1}])$. We define $d_\p=\min\{v_\p(x):x\in V\}$, for every $\p\mid(\alpha)$.

\begin{lemma}\label{fd}
The set
\[
D=\left\{\sum_{i=1}^n r_i\Phi_\infty(v_i) : r_i\in[0,1)\right\}\times\prod_{\mathfrak{p}\mid(\alpha)}\mathfrak{p}^{d_\mathfrak{p}}
\]
is a fundamental domain for $K_\alpha \bmod \Phi(V\cdot\Z[\alpha^{-1}])$.
\end{lemma}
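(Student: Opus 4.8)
The plan is to show that the natural map $D\to K_\alpha/\Phi(V\cdot\Z[\alpha^{-1}])$ is a bijection, i.e.\ that every point of $K_\alpha$ is congruent modulo the lattice $\Phi(\Lambda)$, where $\Lambda:=V\cdot\Z[\alpha^{-1}]$, to exactly one point of $D$. Throughout I split $K_\alpha=K_\infty\times K_f$ with $K_f=\prod_{\p\mid(\alpha)}K_\p$, and correspondingly $D=D_\infty\times D_f$, where $D_\infty=\{\sum_i r_i\Phi_\infty(v_i):r_i\in[0,1)\}$ and $D_f=\prod_{\p\mid(\alpha)}\p^{d_\p}$. I write $\Phi_f$ for the component of $\Phi$ landing in $K_f$. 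Two facts get used repeatedly: $D_f$ is a compact open \emph{subgroup} of $K_f$ (a product of fractional ideals), and $D_\infty$ is an exact fundamental domain for $K_\infty$ modulo the full-rank lattice $\Phi_\infty(V)$ — the latter because the $v_i$ are rationally independent (previous lemma), so the $\Phi_\infty(v_i)$ form an $\R$-basis of $K_\infty\cong\R^n$ and $D_\infty$ is the associated half-open fundamental parallelotope.

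The technical heart of the argument is the identity
\[
\Lambda_0:=\{\lambda\in\Lambda:v_\p(\lambda)\geq d_\p\text{ for all }\p\mid(\alpha)\}=V .
\]
The inclusion $V\subseteq\Lambda_0$ is immediate from $d_\p=\min\{v_\p(x):x\in V\}$. For the converse I would write $\lambda\in\Lambda_0$ as $\lambda=\alpha^{-k}w$ with $w\in V$ and $k\geq 0$ minimal (recall $\Lambda=\bigcup_{k\geq0}\alpha^{-k}V$ since $\alpha V\subseteq V$), and suppose $k\geq 1$. The valuation bound then forces $v_\p(w)=kv_\p(\alpha)+v_\p(\lambda)\geq v_\p(\alpha)+d_\p$ for every $\p\mid(\alpha)$, i.e.\ $w\in W:=\{x\in V:v_\p(x)\geq v_\p(\alpha)+d_\p\ \forall\,\p\mid(\alpha)\}$. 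It therefore suffices to prove $W=\alpha V$: then $w=\alpha x$ with $x\in V$ gives $\lambda=\alpha^{-(k-1)}x$, contradicting minimality, so in fact $k=0$ and $\lambda\in V$.

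To obtain $W=\alpha V$ I compare indices in $V$. One has $\alpha V\subseteq W$ trivially, and $[V:\alpha V]=\abs{N_{K/\Q}(\alpha)}=\prod_{\p\mid(\alpha)}\mathfrak N(\p)^{v_\p(\alpha)}$, since multiplication by $\alpha$ has determinant $N_{K/\Q}(\alpha)$ on the rank-$n$ lattice $V$. On the other hand $W$ is the kernel of the composite $V\xrightarrow{\Phi_f}D_f\to D_f/\alpha D_f$, and $\abs{D_f/\alpha D_f}=\prod_{\p\mid(\alpha)}\mathfrak N(\p)^{v_\p(\alpha)}$. Hence $W=\alpha V$ will follow once I know this composite is \emph{surjective}, equivalently that $\Phi_f(V)$ is dense in $D_f$. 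This density is the main obstacle and is where strong approximation enters: since $V$ contains $\tfrac{m}{q}\mathcal O$ for suitable $m,q\in\Z$ (finite index, as in Lemma~\ref{finind}), the Chinese remainder theorem shows $\overline{\Phi_f(V)}$ is an open subgroup of $D_f$, and the crucial point is that the minimality in the definition of $d_\p$ provides, at each $\p\mid(\alpha)$, an element of $V$ of valuation exactly $d_\p$, forcing $\overline{\Phi_f(V)}$ to exhaust all of $D_f$. Granting $\Lambda_0=V$, it follows in particular that $\Phi_\infty(\Lambda_0)=\Phi_\infty(V)$.

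Finally I assemble the fundamental-domain property. For \emph{covering}, given $z=(z_\infty,z_f)\in K_\alpha$ I first note that $\Phi_f(\Lambda)$ is dense in $K_f$: indeed $\Phi_f(\alpha^{-k}V)=\alpha^{-k}\Phi_f(V)$ is dense in $\alpha^{-k}D_f$, and these balls exhaust $K_f$ as $k\to\infty$ because $\alpha^{-1}$ lowers $\p$-adic valuations. Thus there is $\lambda_1\in\Lambda$ with $z_f-\Phi_f(\lambda_1)\in D_f$; then, since translations by $\Lambda_0=V$ keep the $K_f$-component inside the subgroup $D_f$ while moving the $K_\infty$-component by the full lattice $\Phi_\infty(V)$, I choose $\lambda_2\in V$ with $z_\infty-\Phi_\infty(\lambda_1+\lambda_2)\in D_\infty$, so that $z-\Phi(\lambda_1+\lambda_2)\in D$. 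For \emph{uniqueness}, if $d,d'\in D$ satisfy $d-d'=\Phi(\lambda)$ with $\lambda\in\Lambda$, the $K_f$-components give $\Phi_f(\lambda)\in D_f-D_f=D_f$, whence $\lambda\in\Lambda_0=V$; the $K_\infty$-components then give $\Phi_\infty(\lambda)\in(D_\infty-D_\infty)\cap\Phi_\infty(V)=\{0\}$, and injectivity of $\Phi$ yields $\lambda=0$, i.e.\ $d=d'$. I expect the density statement $\overline{\Phi_f(V)}=D_f$ (equivalently $\Lambda_0=V$) to be the only genuinely delicate step; the remainder is the standard decoupling of the Archimedean and non-Archimedean reductions.
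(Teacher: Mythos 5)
Your reduction of the lemma to the single identity $\Lambda_0=V$ is correct, and the surrounding architecture (the parallelotope property of $D_\infty$ over $\Phi_\infty(V)$, the covering argument, the index count $[V:\alpha V]=\abs{N_{K/\Q}(\alpha)}=\abs{D_f/\alpha D_f}$) is sound; it is in fact a cleaner organization than the paper's, which builds the reduction map explicitly via a floor function at the infinite places and $\mathfrak{p}$-adic fractional parts at the finite ones, first for $\Phi(\mathcal{O}_{S_\alpha})$ and then by ``adapting all the arguments'' to $\Phi(V\cdot\Z[\alpha^{-1}])$. However, the step you yourself flag as the main obstacle is a genuine gap, and your proposed justification does not close it. From the facts that $\overline{\Phi_f(V)}$ is an open subgroup of $D_f$ and that $V$ contains, for each $\mathfrak{p}$, an element of valuation exactly $d_\mathfrak{p}$, you cannot conclude $\overline{\Phi_f(V)}=D_f$: the closure is only a module over the closure of $\Z[\alpha]$, not over $\prod_\mathfrak{p}\mathcal{O}_\mathfrak{p}$, and an open additive subgroup of $\prod_\mathfrak{p}\mathcal{O}_\mathfrak{p}$ containing a unit at each place can be proper (already $\Z_p+\mathfrak{p}\subsetneq\mathcal{O}_\mathfrak{p}$ when the residue degree exceeds $1$, and congruences linking two primes above the same $p$ give further examples).

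Worse, the density claim --- and with it the lemma as stated --- actually fails in general, so no argument can fill this gap without an extra hypothesis. Take $\sigma(1)=1^{4}2$, $\sigma(2)=1^{4}$, with characteristic polynomial $x^2-4x-4$ and Pisot root $\alpha=2+2\sqrt{2}$; here $(\alpha)=(2)=\mathfrak{p}^2$ with $\mathfrak{p}=(\sqrt{2})$, and the normalization $\V{v}_\alpha=(1,\alpha-4)$ gives $V=\Z+2\sqrt{2}\,\Z$ and $d_\mathfrak{p}=0$, so $\mathfrak{p}^{d_\mathfrak{p}}=\mathcal{O}_\mathfrak{p}=\Z_2[\sqrt{2}]$ while $\overline{\Phi_f(V)}=\Z_2+2\sqrt{2}\,\Z_2$ has index $2$ in it. Correspondingly $\Lambda_0\supsetneq V$: the unit $\sqrt{2}-1=1-\alpha^{-1}\cdot(2\sqrt{2})$ belongs to $V\cdot\Z[\alpha^{-1}]$, satisfies $v_\mathfrak{p}(\sqrt{2}-1)=0=d_\mathfrak{p}$, and equals $0\cdot v_1+\tfrac{1}{2}v_2$, so $\Phi(\sqrt{2}-1)$ and $\mathbf{0}$ are two distinct points of $D$ that are congruent modulo $\Phi(V\cdot\Z[\alpha^{-1}])$. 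You should know that the paper's own proof contains the same gap, concentrated in the unproved assertion that ``the completion of $V$ at $\mathfrak{p}$ is isomorphic to $\mathfrak{p}^{d_\mathfrak{p}}$''; your index-theoretic reformulation at least makes visible exactly what has to be proved. The statement (and your proof) becomes correct if $\prod_{\mathfrak{p}\mid(\alpha)}\mathfrak{p}^{d_\mathfrak{p}}$ is replaced by the closure of the image of $V$ in $\prod_{\mathfrak{p}\mid(\alpha)}K_\mathfrak{p}$, since then surjectivity onto $D_f/\alpha D_f$ holds by construction and $\abs{D_f/\alpha D_f}=\prod_\mathfrak{p}\abs{\alpha}_\mathfrak{p}^{-1}=\abs{N_{K/\Q}(\alpha)}$ still matches $[V:\alpha V]$.
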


\begin{proof}
Let $w_1,\ldots,w_n$ be an integral basis of $\mathcal{O}$ over $\Z$. We claim that the set
\[
D_0:=\left\{\sum_{i=1}^n r_i\Phi_\infty(w_i): r_i\in[0,1)\right\}\times\prod_{\mathfrak{p}\mid(\alpha)}\mathcal{O}_\mathfrak{p}
\]
is a fundamental domain for $K_\alpha \bmod \Phi(\mathcal{O}_{S_\alpha})$.

To prove this claim let $\mathbf{z}:=(z_\mathfrak{p})_{\mathfrak{p}\in S_\alpha}\in K_\alpha$.
We know that $\Phi_\infty(w_1),\ldots,\Phi_\infty(w_n)$ is a basis of the real vector space $K_\infty$. Thus $\mathbf{z}_\infty:=(z_\p)_{\p\mid\infty}=\sum_{i=1}^n r_i\Phi_\infty(w_i)\in K_\infty$ for some $r_i\in\R$, and we denote by $\iota(\mathbf{z}_\infty)$ the element $\sum_{i=1}^n \lfloor r_i\rfloor w_i\in\mathcal{O}$.
For $\mathfrak{p}\mid(\alpha)$, $z_\mathfrak{p}\in K_\mathfrak{p}$ can be written as
\[
z_\mathfrak{p}=\sum_{i=-m}^{-1} c_i\nu^i + \sum_{i=0}^{\infty} c_i\nu^i\text{, }\quad m\in\N\text{,}
\]
where $\nu$ is a uniformiser and the $c_i$ are taken in a system of representatives of the residue class field $\mathcal{O}_\mathfrak{p}/\mathfrak{p}\mathcal{O}_\mathfrak{p}$.
Basically we view $z_\mathfrak{p}$ as the sum of a $\mathfrak{p}$-adic fractional part, that we denote by $\lambda_\p(z_\mathfrak{p})$, and a $\mathfrak{p}$-adic integral part.
Define
\[
y=\sum_{\mathfrak{p}\mid (\alpha)} \lambda_\p(z_\mathfrak{p})+\iota\Bigg( \mathbf{z}_\infty-
\Phi_\infty\bigg(\sum_{\mathfrak{p}\mid (\alpha)}\lambda_\p(z_\mathfrak{p})\bigg)\Bigg)\text{.}
\]
One checks that $y\in\mathcal{O}_{S_\alpha}$ and
$\V{z}-\Phi(y)\in D_0$.
Indeed, $\mathbf{z}_\infty-\Phi_\infty(y)$ is an element of the form $\sum_{i=1}^n r_i\Phi_\infty(w_i)$ with $r_i\in[0,1)$, by definition of $y$, and, for $\mathfrak{p}\mid(\alpha)$,
observe that both $z_\mathfrak{p}-\Phi_\p(\sum_{\mathfrak{p}\mid (\alpha)}\lambda_\p(z_\mathfrak{p}))$ and $\Phi_\mathfrak{p}(\iota(\mathbf{z}_\infty-\Phi_\infty(\sum_{\mathfrak{p}\mid (\alpha)}\lambda_\p(z_\mathfrak{p}))))$ are in $\mathcal{O}_\mathfrak{p}$. Furthermore $\V{z}-\Phi(x)\notin D_0$ for all $x\in \mathcal{O}_{S_\alpha}\setminus\{y\}$ (note that the intervals for the $a_i$ in the definition of $D_0$ are half-open).

Now we replace the lattice $\Phi(\mathcal{O}_{S_\alpha})$ with the sublattice $\Phi(V\cdot\Z[\alpha^{-1}])$. As $w_1,\ldots,w_n$ is a $\Q$-basis for $K$, the same holds for $v_1,\ldots,v_n$. The completion of $V$ at $\p$, \emph{i.e.}, $V_\p:=V\otimes_{\Z}\Z_p$ is isomorphic to $\mathfrak{p}^{d_\mathfrak{p}}$.
We can express an element $z_\p$ of the completion $K_\p$ as $z_\mathfrak{p}=\sum_{i=-m}^{-1} c_i\nu^i + \sum_{i=0}^{\infty} c_i\nu^i$ where $\nu$ is a uniformiser and the $c_i$ are taken in a set of representatives of the residue class field $V_\mathfrak{p}/\mathfrak{p}V_\mathfrak{p}$ isomorphic to $\p^{d_\p}/\p^{d_\p+1}$. Thus we can adapt all the arguments given above to get a unique element $y\in V\cdot\Z[\alpha^{-1}]$ such that $\V{z}-\Phi(y)\in D$.
\end{proof}

\end{section}

\begin{section}{The geometry of Dumont-Thomas numeration}\label{sec:DT}

\subsection{Dumont-Thomas numeration}
Dumont and Thomas~\cite{DT:89} studied numeration systems associated with a primitive substitution $\sigma$. This notion of numeration allows to expand real numbers with respect to a real base $\alpha > 1$, which is the Perron-Frobenius eigenvalue of the substitution. Dumont-Thomas expansions depend on the prefix automaton of the substitution and on the left eigenvector $\V{v}_\alpha$ associated with $\alpha$. The digit set for the expansions is $\mathcal{D}=\{\delta(p) : (p,a,s)\in\mathcal{P}\}$, where $\mathcal{P}$ is defined in \eqref{Pe} and $\delta$ is the map given by
\[
\delta:\mathcal{A}^*\to \Q(\alpha)\text{,}\quad \delta(p)=\langle \textbf{P}(p),\V{v}_\alpha\rangle\text{.}
\]
Notice that $\mathcal{D}\subset V$ is finite and depends on the normalization of $\V{v}_\alpha$.
A sequence $(\delta(p_i))_{i\geq 1}\in\mathcal{D}^\omega$ is called $(\sigma,a)$-\emph{admissible} if there exists a walk in the prefix automaton labeled by $(p_i)_{i\geq 1}$ starting from $a$ with infinitely many non-empty suffixes.

\begin{proposition}[Dumont-Thomas~\cite{DT:89}]\label{DT}
Let $\sigma$ be a primitive substitution on the alphabet $\mathcal{A}$ and fix $a\in\mathcal{A}$.
For every $x\in[0,\delta(a))$, there exists a unique $(\sigma,a)$-admissible sequence
$(\delta(p_i))_{i\geq 1}\in\mathcal{D}^\omega$ such that
\begin{equation}\label{eq:exp} x=\sum_{i\geq 1}\delta(p_i)\alpha^{-i}\text{.} \end{equation}
\end{proposition}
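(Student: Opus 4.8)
The plan is to realize \eqref{eq:exp} as the output of a greedy algorithm driven by the prefix automaton, and to read off both existence and uniqueness from a single telescoping identity.

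\textbf{Setup.} First I would record the fundamental relation. Since $M_\sigma\V{P}(b)=\V{P}(\sigma(b))$ and $\V{v}_\alpha$ is a left eigenvector, $\delta(\sigma(b))=\langle \V{P}(\sigma(b)),\V{v}_\alpha\rangle=\langle M_\sigma\V{P}(b),\V{v}_\alpha\rangle=\alpha\langle \V{P}(b),\V{v}_\alpha\rangle=\alpha\,\delta(b)$ for every letter $b$. Because $\sigma$ is primitive I may normalize $\V{v}_\alpha$ to be strictly positive (Perron--Frobenius), whence $\delta(w)>0$ for every non-empty word $w$. Writing $\sigma(b)=c_1\cdots c_\ell$, the outgoing edges $b\xrightarrow{p}c$ of the prefix automaton are exactly the factorizations with $p=c_1\cdots c_{j-1}$ and $c=c_j$; the corresponding numbers $\delta(p)=\sum_{k<j} v_{c_k}$ are the left endpoints of the half-open intervals $\bigl[\delta(p),\delta(p)+\delta(c)\bigr)$, which partition $[0,\delta(\sigma(b)))=[0,\alpha\,\delta(b))$. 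This partition is the engine of the algorithm.

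\textbf{Existence.} Starting from $x_0=x\in[0,\delta(a))$ with $a_0=a$, I would at step $i$ multiply by $\alpha$ and locate $\alpha x_{i-1}$ in the unique interval of the partition of $[0,\alpha\,\delta(a_{i-1}))$, reading off the edge $a_{i-1}\xrightarrow{p_i}a_i$ and setting $x_i=\alpha x_{i-1}-\delta(p_i)\in[0,\delta(a_i))$. Iterating $x_{i-1}=\alpha^{-1}(\delta(p_i)+x_i)$ gives $x=\sum_{i=1}^N\delta(p_i)\alpha^{-i}+\alpha^{-N}x_N$; since $0\le x_N<\delta(a_N)\le\max_b v_b$ and $\alpha>1$, the remainder vanishes and \eqref{eq:exp} holds. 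To see that the produced walk is $(\sigma,a)$-admissible I would track $y_i:=\delta(a_i)-x_i>0$. The recursion together with $\delta(p_i)=\alpha\,\delta(a_{i-1})-\delta(a_i)-\delta(s_i)$ yields $y_i=\alpha\,y_{i-1}-\delta(s_i)$; were the suffixes $s_i$ all empty beyond some index $I$, this would force $y_i=\alpha\,y_{i-1}$ and hence $y_i\to\infty$, contradicting $y_i\le\max_b v_b$. Thus infinitely many suffixes are non-empty.

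\textbf{Uniqueness.} The heart of the matter is the estimate that any $(\sigma,b)$-admissible sequence $(\delta(q_j))_{j\ge1}$, with associated walk $b=b_0\xrightarrow{q_j}b_j$ and suffixes $t_j$, satisfies $\sum_{j\ge1}\delta(q_j)\alpha^{-j}<\delta(b)$. Substituting $\delta(q_j)=\alpha\,\delta(b_{j-1})-\delta(b_j)-\delta(t_j)$ and telescoping gives $\sum_{j=1}^N\delta(q_j)\alpha^{-j}=\delta(b)-\delta(b_N)\alpha^{-N}-\sum_{j=1}^N\delta(t_j)\alpha^{-j}$, so in the limit $\sum_{j\ge1}\delta(q_j)\alpha^{-j}=\delta(b)-\sum_{j\ge1}\delta(t_j)\alpha^{-j}$, which is strictly below $\delta(b)$ precisely because infinitely many $\delta(t_j)$ are positive. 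Given two admissible representations of the same $x$, I note first that within a fixed vertex the digits $\delta(p)$ are pairwise distinct, so an admissible digit sequence determines its walk uniquely from the starting letter $a$. Applying the estimate to every tail shows that the tail sums $x_i=\sum_{j\ge1}\delta(p_{i+j})\alpha^{-j}$ lie in $[0,\delta(a_i))$; since $\alpha x_{i-1}=\delta(p_i)+x_i$ then lands in the unique greedy interval, each digit is forced, and an induction starting from $x_0=x$ identifies the representation with the one built above.

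\textbf{Main obstacle.} The routine parts are the convergence and the interval bookkeeping; the delicate point is the strictness $\sum_{j\ge1}\delta(q_j)\alpha^{-j}<\delta(b)$, i.e.\ exactly why admissibility (infinitely many non-empty suffixes) is the correct normalizing condition. Everything funnels through the telescoping identity and the strict positivity $\delta(w)>0$ for non-empty $w$ that primitivity provides; I would take care that the eigenvector normalization making all $v_i>0$ is in force throughout.
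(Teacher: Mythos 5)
Your proof is correct and complete. Note that the paper itself offers no proof of this proposition: it is quoted as a result of Dumont and Thomas with a citation to \cite{DT:89}, so there is no internal argument to compare against. Your greedy construction is exactly the iteration of the map $T_\sigma$ that the paper introduces immediately after the statement (the partition of $[0,\alpha\delta(b))$ by the intervals $[\delta(p),\delta(p)+\delta(c))$ is precisely what makes $T_\sigma$ well defined), and the two points that genuinely need care are both handled: the positivity normalization of $\mathbf{v}_\alpha$ via Perron--Frobenius, which makes the digits within a vertex pairwise distinct and hence lets a digit sequence determine its walk, and the strict inequality $\sum_{j\geq 1}\delta(q_j)\alpha^{-j}<\delta(b)$ for admissible sequences, which your telescoping identity derives from the presence of infinitely many non-empty suffixes and which is exactly what forces every admissible representation to coincide with the greedy one. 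This is in substance the original Dumont--Thomas argument (the natural generalization of the greedy $\beta$-expansion proof), so nothing further is needed.
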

We will call an expansion of this form a \emph{$(\sigma,a)$-expansion} and we will denote its sequence of digits by $(x)_{\sigma,a}$.

For $x\in\R^+$, let $m\geq -1$ be the smallest integer such that $\alpha^{-m-1}x\in[0,\delta(a))$, for some $a\in\mathcal{A}$. 
Then we can expand $\alpha^{-m-1}x$ using Proposition~\ref{DT} and obtain a walk labeled by $(p_{-i})_{i\geq -m}$ such that
\[
x=\sum_{i=0}^m \delta(p_{m-i}) \alpha^{m-i} +\sum_{j\geq 1} \delta(p_{-j}) \alpha^{-j}\quad\Longleftrightarrow\quad (x)_{\sigma,a}=\delta(p_{m})\cdots\delta(p_0).\delta(p_{-1})\delta(p_{-2})\cdots
\]
Set $X=\bigcup_{a\in\mathcal{A}} \big([0,\delta(a)) \times \{a\}\big)$  and define the map
\[ T_\sigma: X \to X \text{,}\quad (y,b) \mapsto \big(\alpha y - \delta(p), a\big)\text{,} \]
where $a\in\mathcal{A}$ and $p\in\mathcal{A}^*$ are uniquely determined by $\sigma(b)=p a s$ and $\alpha y-\delta(p)\in[0,\delta(a))$. 
Given any $(y,b)\in X$ we get its $(\sigma,b)$-expansion by computing its $T_\sigma$-orbit. Notice that $T_\sigma$ is not injective and the pre-image has the form
\begin{equation}\label{T-1} T_\sigma^{-1}(x,a)=\bigcup_{b\stackrel{p}{\longrightarrow}a}\{(\alpha^{-1}(x+\delta(p)),b)\}\text{, }\quad \text{for }(x,a)\in X\text{.}
\end{equation}
It is easy to see from this identity that for $(x,a)\in X$ we have
\begin{equation}\label{trick}
\alpha^mT_\sigma^{-m}(x,a)=x+\alpha^mT_\sigma^{-m}(0,a)\text{, }\quad\forall m\in\N\text{,}
\end{equation}
where $x+(z,a)=(x+z,a)$ is used.

\subsection{Integers and fractional parts}\label{fracint}
We will be interested in \,\textquotedblleft integers\textquotedblright\, and \,\textquotedblleft fractional parts\textquotedblright\, obtained from the Dumont-Thomas numeration system generated by the substitution $\sigma$, \emph{i.e.}, all those $x\in\R^+$ such that only non-negative (respectively negative) powers of $\alpha$ occur in its $(\sigma,a)$-expansion, for some $a\in\mathcal{A}$. 

In the sequel, for a sequence of subsets $\{A_k\}_{k\in\N}$ of a topological space,  we write $\mathop{\rm Lim}_{k\to\infty} A_k$ for the \emph{topological limit}  
(if it exists; see \emph{e.g.} \cite[Chapter~II, \S29]{K:66} for the definition of this object).

Let $\Z_{\sigma,a}^{(k)}=\alpha^k\cdot T_\sigma^{-k}(0,a)\subset\R$ be the set of real numbers corresponding to all finite walks of length $k$ in the prefix automaton 
ending at state $a$, \emph{i.e.}, the sums $\sum_{i=0}^{k-1}\delta(p_i)\alpha^i$, 
where $a_k\stackrel{p_{k-1}}{\longrightarrow}\cdots\stackrel{p_1}{\longrightarrow} a_1\stackrel{p_0}{\longrightarrow}a$. To such an element we associate the left-sequence of digits $\delta(p_{k-1})\cdots \delta(p_1)\delta(p_0).\in \phantom{}^*\mathcal{D}$.
Observe that these sets are \textbf{not} nested (see Example \ref{ex1}).

\begin{definition}
The set of $(\sigma,a)$-\emph{integers} is the topological limit $\Z_{\sigma,a}= \mathop{\rm Lim}_{k\to\infty} \Z_{\sigma,a}^{(k)}$.  
We call the union of the $\Z_{\sigma,a}$  for $a\in\mathcal{A}$  $\sigma$-\emph{integers} and denote it by $\Z_\sigma$.
\end{definition}
The topological limit in the definition exists since for every interval $[0,\ell]\subset\R^+$ there exists $k_0\in\N$ such that 
$\mathop{\rm Lim}_{k\to\infty} \Z_{\sigma,a}^{(k)}\cap[0,\ell] = \Z_{\sigma,a}^{(k)}\cap[0,\ell]$, for each $k\geq k_0$.

Notice that the set of $\sigma$-integers is a subset of $V$ and is discrete and closed. 
The set $\Z_{\sigma,a}$ is the set of those finite sums $\sum_{i=0}^{k-1}\delta(p_i)\alpha^i\in \Z_{\sigma,a}^{(k)} $, $k\in\N$, 
whose associated sequence of digits can be left-padded by zeros. In particular, $\Z_{\sigma,a}\subsetneq \bigcup_{k\geq 0} \Z_{\sigma,a}^{(k)}$, that is, 
not every \emph{approximation} is a $(\sigma,a)$-integer.

\begin{definition} The set of $(\sigma,a)$-\emph{fractional parts} is defined as
\begin{equation}
\text{Frac}(\sigma,a)=V\cdot\Z[\alpha^{-1}]\cap[0,\delta(a))\text{,}
\end{equation}
and $\text{Frac}(\sigma)=\bigcup_{a\in\mathcal{A}}\text{Frac}(\sigma,a)=V\cdot\Z[\alpha^{-1}]\cap[0,\max_{a\in\mathcal{A}}\delta(a))$, will be called the set of $\sigma$-\emph{fractional parts}.
\end{definition}

An element $x\in\text{Frac}(\sigma,a)$ has a $(\sigma,a)$-expansion $(x)_{\sigma,a}=.\delta(p_{-1})\delta(p_{-2})\cdots$, where $(p_{-k})_{k\geq 1}$ is the label of an infinite walk in the prefix automaton starting at state $a$.

\begin{lemma}\label{le:T} $T_\sigma$ maps $\text{Frac}(\sigma)$ onto $\text{Frac}(\sigma)$.
\end{lemma}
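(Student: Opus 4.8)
The plan is to work with $\text{Frac}(\sigma)$ in its natural incarnation inside $X$, namely as $\bigcup_{a\in\mathcal{A}}(\text{Frac}(\sigma,a)\times\{a\})$, so that $T_\sigma$ can actually be applied; the statement then splits into the inclusion $T_\sigma(\text{Frac}(\sigma))\subseteq\text{Frac}(\sigma)$ and a surjectivity claim. The only algebraic input I will need is that $V\cdot\Z[\alpha^{-1}]$ is a (fractional) ideal of $\Z[\alpha^{-1}]$ (as recorded just before Lemma~\ref{finind}), hence closed under multiplication by $\alpha$ and by $\alpha^{-1}$, together with the observation that $\delta(p)=\langle\V{P}(p),\V{v}_\alpha\rangle$ is a non-negative integer combination of the $v_i$ and therefore lies in $V\subseteq V\cdot\Z[\alpha^{-1}]$.

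For the inclusion I would take $(x,b)$ with $x\in\text{Frac}(\sigma,b)=V\cdot\Z[\alpha^{-1}]\cap[0,\delta(b))$. By definition $T_\sigma(x,b)=(\alpha x-\delta(p),a)$, where $\sigma(b)=pas$ and $\alpha x-\delta(p)\in[0,\delta(a))$. Since $\alpha x\in V\cdot\Z[\alpha^{-1}]$ and $\delta(p)\in V\cdot\Z[\alpha^{-1}]$, the first coordinate lies in $V\cdot\Z[\alpha^{-1}]\cap[0,\delta(a))=\text{Frac}(\sigma,a)$, so $T_\sigma(x,b)\in\text{Frac}(\sigma)$.

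For surjectivity I would use the explicit preimage formula \eqref{T-1}. Fix $(x,a)$ with $x\in\text{Frac}(\sigma,a)$. Because $\sigma$ is primitive, every row of $M_\sigma$ is non-zero, so $a$ occurs in $\sigma(b)$ for at least one $b$; equivalently $a$ has an incoming edge $b\xrightarrow{p}a$ in the prefix automaton and $T_\sigma^{-1}(x,a)$ is non-empty. For any such edge \eqref{T-1} produces the preimage $(y,b)$ with $y=\alpha^{-1}(x+\delta(p))$, and since \eqref{T-1} describes the preimage of $T_\sigma$ as a self-map of $X$, we automatically have $y\in[0,\delta(b))$. Moreover $x+\delta(p)\in V\cdot\Z[\alpha^{-1}]$, hence $y=\alpha^{-1}(x+\delta(p))\in V\cdot\Z[\alpha^{-1}]$, so $(y,b)\in\text{Frac}(\sigma,b)$ with $T_\sigma(y,b)=(x,a)$. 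This shows the map is onto.

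The computations are routine; the only point deserving care is the surjectivity step, where I must guarantee both that a preimage exists—this is exactly where primitivity enters, ensuring $a$ has an incoming edge—and that the candidate $\alpha^{-1}(x+\delta(p))$ genuinely lands in $[0,\delta(b))$ rather than merely in $V\cdot\Z[\alpha^{-1}]$. I expect this interval membership to be the subtle part: it is immediate if one invokes \eqref{T-1}, but a direct verification rests on the inequality $\delta(p)+\delta(a)\le\delta(\sigma(b))=\alpha\delta(b)$, which holds because $pa$ is a prefix of $\sigma(b)$, $\delta$ is additive, and $\delta(\sigma(b))=\alpha\delta(b)$ follows from $\V{v}_\alpha$ being a left eigenvector together with $\V{P}\circ\sigma=M_\sigma\circ\V{P}$.
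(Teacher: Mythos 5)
Your proof is correct and follows essentially the same route as the paper: the explicit preimage formula \eqref{T-1}, primitivity to guarantee an incoming edge $b\xrightarrow{p}a$, and membership of $\alpha^{-1}(x+\delta(p))$ in $V\cdot\Z[\alpha^{-1}]$ via the ideal property. The only (cosmetic) difference is how $y\in[0,\delta(b))$ is justified --- the paper prepends the digit $\delta(p)$ to the Dumont--Thomas expansion of $x$, whereas you verify the inequality $\delta(p)+\delta(a)\le\alpha\delta(b)$ directly; both are valid, and you additionally record the easy forward inclusion $T_\sigma(\mathrm{Frac}(\sigma))\subseteq\mathrm{Frac}(\sigma)$, which the paper leaves implicit.
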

\begin{proof}
Given $(x,a)\in \text{Frac}(\sigma,a)$, $T_\sigma(y,b)=(x,a)$ for all $(y,b)$ such that $y=\alpha^{-1}(x+\delta(p))$ and $\sigma(b)=pas$. 
Notice that there exists at least one $(y,b)$ of this form since the prefix automaton is strongly connected by the primitivity of $\sigma$. 
It is clear that $y\in V\cdot\Z[\alpha^{-1}]$. Furthermore, if $(x)_{\sigma,a}=.\delta(p_1)\delta(p_2)\cdots$, then $(y)_{\sigma,b}=.\delta(p)\delta(p_1)\delta(p_2)\cdots$ 
which implies that $y\in[0,\delta(b))$.
\end{proof}

\begin{ex}\label{ex1}
Let $\sigma$ be the substitution $\sigma(1)=121$, $\sigma(2)=11$.
We have \[ M_\sigma=\begin{pmatrix} 2 & 2 \\ 1 & 0 \end{pmatrix}\text{, }\qquad
\det(xI-M_\sigma)=x^2-2x-2\text{.} \]
This substitution is an irreducible non-unit Pisot substitution with associated Pisot root $\alpha=1+\sqrt{3}$.
A left eigenvector associated to $\alpha$ for $M_\sigma$ is $\V{v}_\alpha=(\frac{\alpha}{2},1)$. From the prefix automaton of the substitution depicted in Figure~\ref{presufpicture} we can see that the set of digits is $\mathcal{D}=\{\delta(\epsilon),\delta(1),\delta(12)\}$.
\begin{figure}[h]
\includegraphics[scale=0.5]{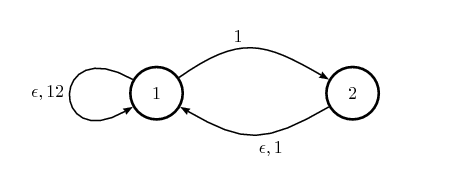}
\caption{The prefix automaton of the substitution $\sigma(1)=121$, $\sigma(2)=11$. \label{presufpicture}}
\end{figure}
In Figure~\ref{tsigmafigure} we illustrate the map $T_\sigma$ and its combinatorial structure.
If the letter $a$ of a point $(x,a)\in X$ is equal to $1$ then the function depicted in the left square is used to compute the first coordinate of its image, if it is $2$, we use the one in the right square. Moreover, the second coordinate of the image of $(x,a)$ is $1$ if the line of the linear piece is black, and $2$ if it is light gray. For example, given $(x,1)\in [1,\frac{\alpha-1}{2})\times\{1\}$, which is in the left square, after one iteration of $T_\sigma$ it will jump to the right square.
\begin{figure}
\begin{center}
\begin{tikzpicture}[scale=2]
\draw(0,0)node[below]{$0$}--(1.366,0)node[below]{$\frac{\alpha}{2}$}--(1.366,1.366)--(0,1.366)node[left]
{$\frac{\alpha}{2}$}--(0,0) (0,1)node[left]{$1$}--(1.366,1) (1/2,0)node[below]{$\frac{1}{2}$}--(1/2,1.366) (0.866,0)node[below]{$\frac{\alpha-1}{2}$}--(0.866,1.366);
\draw[very thick,black](0,0)--(1/2,1.366) (0.866,0)--(1.366,1.366);
\draw[very thick,lightgray](1/2,0)--(0.866,1);
\draw[dashed](0,0)--(1.366,1.366);
\end{tikzpicture}\qquad\qquad
\begin{tikzpicture}[scale=2]
\draw(0,0)node[below]{$0$}--(1.366,0)node[below]{$\frac{\alpha}{2}$}--(1.366,1.366)--(0,1.366)node[left]
{$\frac{\alpha}{2}$}--(0,0) (0,1)node[left]{$1$}--(1.366,1) (1/2,0)node[below]{$\frac{1}{2}$}--(1/2,1.366) (1,0)node[below]{$1$}--(1,1.366);
\draw[very thick,black](0,0)--(1/2,1.366) (1/2,0)--(1,1.366);
\draw[dashed](0,0)--(1.366,1.366);
\end{tikzpicture}
\end{center}
\caption{The map $T_\sigma$ \label{tsigmafigure}}
\end{figure}
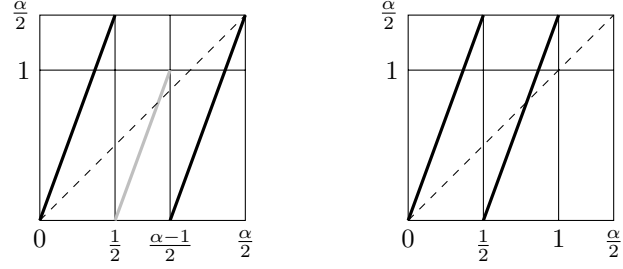
We compute as an example the orbit of $(\frac{1}{4},1)$:
\[ \xymatrix{ (\frac{1}{4},1)\ar[r]^{T_\sigma} & (\frac{\alpha}{4},1)\ar[r]^{T_\sigma} & (\frac{1}{2},2)\ar[r]^{T_\sigma} & *[l]{\,(0,1)\,} \ar@(dr,ur)[]_{T_\sigma} } \]
Thus we have $(\frac{1}{4})_{\sigma,1}=.0\delta(1)\delta(1)$. Observe that $(\frac{\alpha^3}{4},1)=(\tfrac{3\alpha}{2}+1,1)\in\Z_{\sigma,1}^{(k)}$ for all $k\geq 2$, thus $\tfrac{3\alpha}{2}+1$ is a $(\sigma,1)$-integer. On the other hand $(\tfrac{3\alpha}{2}+1,2)\in \Z_{\sigma,2}^{(2)}$, with associated walk $2\stackrel{\delta(1)}{\longrightarrow}1\stackrel{\delta(1)}{\longrightarrow}2$,
but $(\tfrac{3\alpha}{2}+1,2)\notin \Z_{\sigma,2}^{(3)}$, and this is due to the fact that we cannot left-pad its expansion by $0$s, \emph{i.e.}, we can extend the walk in the automaton on the left only by adding a digit $\delta(1)$. As another example we have $\delta(1)\delta(12).\in\Z_{\sigma,1}^{(2)}$, with associated walk $2\stackrel{\delta(1)}{\longrightarrow}1\stackrel{\delta(12)}{\longrightarrow}1$, but it cannot be extended to any infinite walk. In this sense, it remains an approximation. We list some other expansions:
\[
(\tfrac{\alpha-1}{3})_{\sigma,2}=.\overline{\delta(1)0}\qquad (\alpha-2)_{\sigma,1}=.\delta(1)\delta(1)\overline{0\delta(12)}\text{,}\qquad  (\tfrac{\alpha-1}{4})_{\sigma,1}=.0\delta(12)\delta(12)
\]

\end{ex}

\subsection{Tiles associated with Dumont-Thomas Numeration}
We are now in a position to define tiles for Dumont-Thomas numeration. These tiles form a generalization of the tiles defined in Akiyama~\cite{A:02} and Akiyama {\it et al.}~\cite{ABBS:08} in the context of beta-numeration.

\begin{definition}\label{subtiles}
The \emph{Dumont-Thomas subtiles} associated with an irreducible Pisot substitution $\sigma$ are defined as
\begin{equation}
\mathcal{R}_\sigma(a)=\overline{\Phi'(\Z_{\sigma,a})} \qquad (a\in \mathcal{A})\text{.}
\end{equation}
The \emph{Dumont-Thomas central tile} is defined as
\begin{equation}
\mathcal{R}_\sigma=\bigcup_{a\in\mathcal{A}}\mathcal{R}_\sigma(a)\text{.}
\end{equation}
\end{definition}
An example of a Dumont-Thomas central tile is depicted in Figure~\ref{fig:DT}.

For $\V{z}=(z_\p)_{\p\in S_\alpha\setminus\{\p_1\}}\in K_\sigma$ define the norm $\|\V{z}\|=\max\{|z_\p|_\p : \p\in S_\alpha\setminus\{\p_1\} \}$, and set
\begin{equation}\label{ball} M=\frac{\max\{\|\Phi'(\delta(p))\|:\delta(p)\in\mathcal{D}\}} {1-\|\Phi'(\alpha)\|}\text{.} \end{equation}
Note that  the Dumont-Thomas subtiles $\mathcal{R}_\sigma(a)$ are closed by definition. Furthermore they are contained in the closed ball 
$B(\V{0},M)=\{\V{z}\in K_\sigma:\|\V{z}\|\leq M\}$. Thus they are non-empty compact sets. We will prove more properties of these tiles later.

Again generalizing \cite{A:02} (see also \cite{ABBS:08} and \cite{BS:05})  for $x\in\text{Frac}(\sigma)$ we define the $x$-\emph{tiles} as
\[
\mathcal{R}_x=\bigcup_{\{a\in \mathcal{A}: \, x\in[0,\delta(a))\}}
\overline{\Phi'(\mathop{\rm Lim}_{k\to\infty}\alpha^k T_\sigma^{-k}(x,a))}\text{.}
\]
Using \eqref{trick} we easily see that they are unions of subtiles translated by $\Phi'(x)$ that depend on the number of basic intervals which contain $x$, {\it i.e.},
\[
\mathcal{R}_x=\bigcup_{\{a\in \mathcal{A}: \, x\in[0,\delta(a))\}}\mathcal{R}_\sigma(a)+\Phi'(x)\text{.}
\]
For this reason it suffices to consider subtiles in the sequel.
\footnotetext[1]{Ambiguity with colors and grayscale is due on whether you are reading an electronic version of the paper or a printed one.}
\begin{figure}[h!]
\includegraphics[scale=0.35]{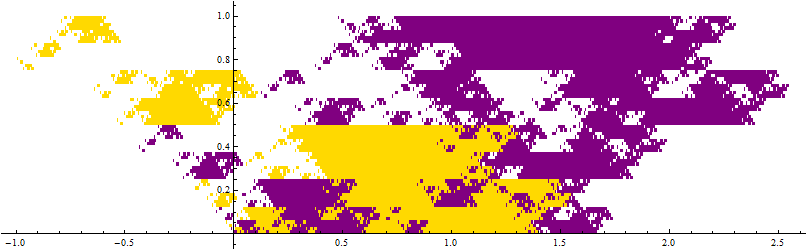}
\begin{picture}(0,0)
\put(0,0){$\mathbb{R}$}
\put(-200,85){$\mathbb{Z}_2$}
\end{picture}
\caption[Caption for LOF]{The central tile $\mathcal{R}_\sigma$ for $\sigma(1)=2121^3$, $\sigma(2)=12$ subdivided in the purple (dark gray) subtile 
$\mathcal{R}_\sigma(1)$ and the yellow (light gray) subtile $\mathcal{R}_\sigma(2)$.\footnotemark[1] \label{fig:DT} Here we represented each 
$\sum_{i=0}^\infty d_i\alpha^i\in\Z_2$ by $\sum_{i=0}^\infty d_i2^{-i-1}\in[0,1]$, with $d_i\in\{0,1\}$.}
\end{figure}

\begin{definition}[{see also \cite[Definition~6.80]{Sin:06}}] The \emph{stepped surface} for an irreducible Pisot substitution $\sigma$ with associated Pisot root $\alpha$ is
\begin{equation}\label{stepped} \mathcal{S}=\{(\Phi(x),a)\in K_\alpha\times\mathcal{A} : x\in \text{Frac}(\sigma,a) \}\text{.}  \end{equation}
The set of projected points of the stepped surface into $K_\sigma$ given by
\begin{equation} \label{GammaDef}
\Gamma=\pi(\mathcal{S})=\{(\Phi'(x),a)\in K_\sigma\times\mathcal{A} :
x\in \text{Frac}(\sigma,a) \}
\end{equation}
will be called the \emph{translation set}.
\end{definition}

As we will see later, this set is the natural translation set for a (multiple) tiling induced by the subtiles. Sing~ \cite[Definition~6.80]{Sin:06} defines stepped surfaces in the context of cut and project schemes; we will come back to this in Section~\ref{sec:MS}.

For our purposes (particularly in Section~\ref{Sec:Ex}) we will interpret $(\gamma,a)\in K_\alpha\times\mathcal{A}$ either as a colored translation vector or as a colored face of the fundamental domain $K_\alpha/\Phi(V\cdot\Z[\alpha^{-1}])$ (see Lemma \ref{fd}). To be more precise, in this latter case, $(\gamma,a)$ will be represented as $\gamma+D_a$, where
\[ D_a=\bigg\{\sum_{i\neq a} r_i\Phi_\infty(v_i) : r_i\in[0,1)\bigg\} \times\prod_{\mathfrak{p}\mid(\alpha)}\mathfrak{p}^{d_\mathfrak{p}}\text{,} \]
and $d_\mathfrak{p}=\min\{ v_\mathfrak{p}(x) : x\in V \}$.

The set function $T_\sigma^{-1}$ defined in (\ref{T-1}) is defined on subsets of $\mathbb{R}\times\mathcal{A}$. 
Thus $T_\sigma^{-1}$ cannot be extended to $K_\alpha\times\mathcal{A}$ in a natural way. 
However, its restriction to subsets of $\mathbb{Q}(\alpha)\times\mathcal{A}$ admits a natural extension to $K_\alpha\times\mathcal{A}$. 
We call this extension $T_{\rm ext}^{-1}$. Its precise definition is
\begin{equation} \label{Text}
\T: K_\alpha\times\mathcal{A}\longrightarrow 2^{K_\alpha\times\mathcal{A}}\text{, } \quad \T(\gamma,a)=\bigcup_{b\stackrel{p}{\longrightarrow} a} 
\{(\alpha^{-1}(\gamma+\Phi(\delta(p))),b)\}\text{.}
\end{equation}

We can iterate $\T$ for $m$ times and get
\begin{align}
(\T)^m(\gamma,a)=&\bigcup_{b_m\stackrel{p_{m-1}}{\rightarrow} \cdots\stackrel{p_1}{\rightarrow} b_1 \stackrel{p_0}{\rightarrow} a}
(\alpha^{-m}(\gamma+\Phi(\delta(p_0)+\alpha\delta(p_1)+\cdots+\alpha^{m-1}\delta(p_{m-1}))),b_m) \label{E1fold} \\
=&\bigcup_{\sigma^m(b)=pas}(\alpha^{-m}(\gamma+\Phi(\delta(p))),b)\text{.} \notag
\end{align}

\begin{proposition}\label{inv}
The set $\mathcal{S}$ is invariant under $\T$.
\end{proposition}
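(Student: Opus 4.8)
The plan is to transport the one-dimensional result of Lemma~\ref{le:T} to $K_\alpha$ through the diagonal embedding $\Phi$, interpreting \emph{invariant} as the set equation $\T(\mathcal{S})=\mathcal{S}$, where $\T(\mathcal{S}):=\bigcup_{(\gamma,a)\in\mathcal{S}}\T(\gamma,a)$.

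First I would record the compatibility of $\Phi$ with the building blocks of $\T$. Since $\Phi$ is a ring homomorphism and multiplication by $\alpha^{-1}$ acts diagonally on $K_\alpha$, for every $x\in\Q(\alpha)$ and every edge $b\xrightarrow{p}a$ of the prefix automaton one has $\alpha^{-1}(\Phi(x)+\Phi(\delta(p)))=\Phi(\alpha^{-1}(x+\delta(p)))$. Comparing the defining formulas \eqref{T-1} and \eqref{Text}, this yields $\T(\Phi(x),a)=\{(\Phi(y),b):(y,b)\in T_\sigma^{-1}(x,a)\}$ for all $x\in\Q(\alpha)$. Writing $F=\{(x,a)\in X:x\in\text{Frac}(\sigma,a)\}$, so that $\mathcal{S}=\{(\Phi(x),a):(x,a)\in F\}$ by \eqref{stepped}, and using that $\Phi$ is injective, the proposition becomes equivalent to the purely one-dimensional set equation $\bigcup_{(x,a)\in F}T_\sigma^{-1}(x,a)=F$.

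For the inclusion $\subseteq$ I would fix $(x,a)\in F$ and an edge $b\xrightarrow{p}a$ and set $y=\alpha^{-1}(x+\delta(p))$. As in the proof of Lemma~\ref{le:T}, $y\in V\cdot\Z[\alpha^{-1}]$ because $x\in V\cdot\Z[\alpha^{-1}]$, $\delta(p)\in\mathcal{D}\subset V$ and $\alpha^{-1}\in\Z[\alpha^{-1}]$. The point that needs care is $y\in[0,\delta(b))$; this is exactly what is shown in Lemma~\ref{le:T}, by observing that prepending the digit $\delta(p)$ to the admissible $(\sigma,a)$-expansion of $x$ produces the admissible $(\sigma,b)$-expansion of $y$. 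Alternatively one computes it directly from the left-eigenvector relation $\V{v}_\alpha M_\sigma=\alpha\V{v}_\alpha$: since $\sigma(b)=pas$ and $M_\sigma\circ\V{P}=\V{P}\circ\sigma$ we get $\alpha\delta(b)=\delta(\sigma(b))=\delta(p)+\delta(a)+\delta(s)$, hence $x+\delta(p)<\delta(a)+\delta(p)\le\alpha\delta(b)$ by positivity of the entries of $\V{v}_\alpha$, and dividing by $\alpha$ gives $0\le y<\delta(b)$. Thus $(y,b)\in F$. For the reverse inclusion $\supseteq$ I would take $(y,b)\in F$ and put $(x,a)=T_\sigma(y,b)$; by the very definition of $T_\sigma$ we have $x=\alpha y-\delta(p)\in[0,\delta(a))$ for the prefix $p$ with $\sigma(b)=pas$, and $x\in V\cdot\Z[\alpha^{-1}]$, so $(x,a)\in F$, while $(y,b)\in T_\sigma^{-1}(x,a)$ by construction. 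Combining the two inclusions gives the displayed set equation, and applying $\Phi$ together with the first paragraph yields $\T(\mathcal{S})=\mathcal{S}$.

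I expect the only genuinely delicate step to be the membership $y\in[0,\delta(b))$ in the inclusion $\subseteq$, i.e.\ checking that the affine preimage lands in the correct fundamental range; this is precisely the content already packaged in Lemma~\ref{le:T}. The remaining work --- commuting $\Phi$ past the affine contraction $z\mapsto\alpha^{-1}(z+\Phi(\delta(p)))$ and keeping track of the set-valued nature of $\T$, so that both inclusions are genuinely required --- is routine once this reduction is in place.
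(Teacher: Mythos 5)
Your reduction of the statement, via the injective embedding $\Phi$ and the intertwining $\T\circ\Phi=\Phi\circ T_\sigma^{-1}$, to the one\--dimensional set equation $\bigcup_{(x,a)}T_\sigma^{-1}(x,a)=\{(x,a):x\in\mathrm{Frac}(\sigma,a)\}$ is sound, and both inclusions are proved correctly: the inclusion $\subseteq$ is exactly the content of Lemma~\ref{le:T} (your direct verification $\alpha\delta(b)=\delta(p)+\delta(a)+\delta(s)\ge x+\delta(p)$ is a clean alternative to the digit\--prepending argument there), and you also supply the reverse inclusion, which the paper's proof of the proposition leaves implicit in the word ``onto'' of Lemma~\ref{le:T}. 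What you omit, and what the paper's proof explicitly establishes as its second step, is that \emph{distinct faces of $\mathcal{S}$ have disjoint images under $\T$}. This is part of what ``invariant'' is meant to carry here: $\T$ is set\--valued and $\mathcal{S}$ is to be read as a union of faces each occurring with coefficient $1$ (recall the convention at the end of Section~\ref{subsec51} that only elements of $\mathcal{F}^*$ with all coefficients equal to $1$ are considered), so invariance must also guarantee that applying $\T$ to all of $\mathcal{S}$ does not produce any face with multiplicity greater than one. The repair is one line in your framework: if $(\Phi(y),b)\in\T(\Phi(x_1),a_1)\cap\T(\Phi(x_2),a_2)$, then $T_\sigma(y,b)=(x_1,a_1)$ and $T_\sigma(y,b)=(x_2,a_2)$, which forces $(x_1,a_1)=(x_2,a_2)$ because $T_\sigma$ is single\--valued, equivalently because $y$ has a unique $(\sigma,b)$\--expansion. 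With that sentence added, your argument covers everything the paper's proof does.
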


\begin{proof}
We prove the proposition in two steps:

\begin{itemize}
\item {\it If $(\Phi(x),a)\in\mathcal{S}$ then $\T(\Phi(x),a)\in\mathcal{S}$.} This is equivalent in showing that every element of $T_\sigma^{-1}(x,a)\in\text{Frac}(\sigma)$. 
But this is a direct consequence of Lemma \ref{le:T}.

\item {\it Distinct faces have disjoint images.}
Suppose $(\Phi(y),b)\in \T(\Phi(x_1),a_1)\cap \T(\Phi(x_2),a_2)$, that is \[ (y,b)\in T_\sigma^{-1}(x_1,a_1)\cap T_\sigma^{-1}(x_2,a_2)\text{.} \] 
This implies that $T_\sigma(y,b)=(x_1,a_1)$ and $T_\sigma(y,b)=(x_2,a_2)$, which is impossible unless $(x_1,a_1)=(x_2,a_2)$, since $y$ has a unique $(\sigma,b)$-expansion. \qedhere
\end{itemize}
\end{proof}

Observe that we can write $\mathcal{R}_\sigma(a)$ in terms of the extended mapping $\T$:
\begin{equation}\label{subnew}
\mathcal{R}_\sigma(a) = \overline{\Phi'(\mathop{\rm Lim}_{k\to\infty} \Z_{\sigma,a}^{(k)})} = \mathop{\rm Lim}_{k\to\infty} \Phi'(\Z_{\sigma,a}^{(k)}) =
\mathop{\rm lim_H}_{k\to\infty} \Phi'(\Z_{\sigma,a}^{(k)}) = \mathop{\rm lim_H}_{k\to\infty} \pi(\alpha^k T_{\rm ext}^{-k}(\mathbf{0},a))\text{,}
\end{equation}
where $\mathop{\rm lim_H}$ denotes the limit with respect to the Hausdorff metric. Indeed, the third equality holds since all $\Phi'(\Z_{\sigma,a}^{(k)})$ are contained in a compact set, and the fourth follows easily recalling the definition of $\Z_{\sigma,a}^{(k)}$ and observing that $\T\circ\Phi=\Phi\circ T_\sigma^{-1}$ and $\Phi'=\pi\circ\Phi$.

\begin{figure}[h]
\includegraphics[scale=0.4]{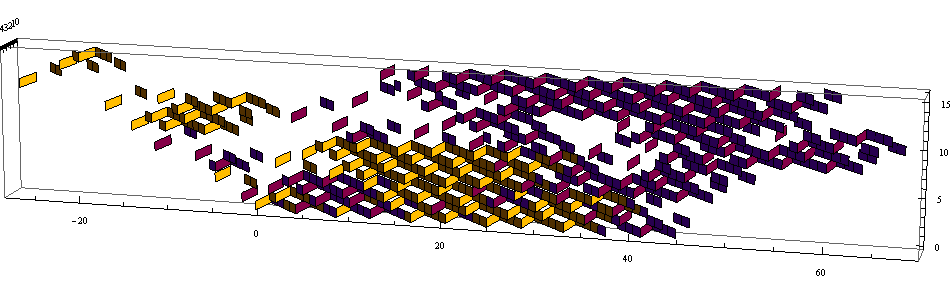}
\caption{$T_{\rm ext}^{-4}((\V{0},1)\cup(\V{0},2))$ for the substitution $\sigma(1)=2121^3$, $\sigma(2)=12$.}
\end{figure}
\begin{figure}[h]
\includegraphics[scale=0.35]{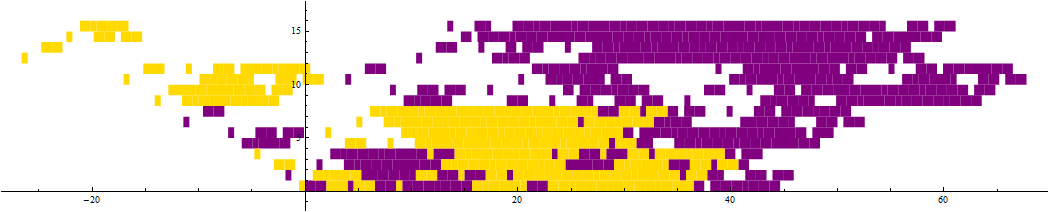}
\begin{picture}(0,0)
\put(0,0){$\mathbb{R}$}
\put(-260,75){$\mathbb{Q}_2$}
\end{picture}
\caption{$T_{\rm ext}^{-4}((\V{0},1)\cup(\V{0},2))$ projected into $K_\sigma$.}
\end{figure}

\subsection{Projective limit}
For $a\in\mathcal{A}$, let us consider again the sets $\Z_{\sigma,a}^{(i)}=\alpha^i\cdot T_\sigma^{-i}(0,a)$, $i\geq 1$, which can be thought as \emph{approximations} of the set of integers $\Z_{\sigma,a}$.
The sets $\Z_{\sigma,a}^{(i)}$ together with the continuous maps $f_i^j: \Z_{\sigma,a}^{(j)} \to \Z_{\sigma,a}^{(i)}$, $\sum_{k=0}^{j-1}\delta(p_k)\alpha^k \mapsto \sum_{k=0}^{i-1}\delta(p_k)\alpha^k$, for $i,j\in\N^+$ and $j\geq i$, form a projective system. Consider the projective limit
\begin{equation} \hat{\Z}_{\sigma,a}=\varprojlim_i \Z_{\sigma,a}^{(i)} = \Big\{ (x_i)_{i=1}^\infty\in\prod_{i\geq 1} \Z_{\sigma,a}^{(i)}: \text{ for all }j\geq i,\; f_i^j(x_j)=x_i \Big\}\text{,} \end{equation}
and the union $\hat{\Z}_\sigma=\bigcup_{a\in\mathcal{A}} \hat{\Z}_{\sigma,a}$.

If we give to each $\Z_{\sigma,a}^{(i)}$ the discrete topology and to $\prod_{i\geq 1} \Z_{\sigma,a}^{(i)}$ the product topology, the space $\hat{\Z}_{\sigma,a}$ inherits a topology which turns it into a compact space.

We can equip $\hat{\Z}_{\sigma,a}$ with two other topologies. Indeed, equip $\hat{\Z}_{\sigma,a}$ with the topology defined by the distance \[ d(x,y)=2^{-\max\{m\in\N:\; x_m=y_m\}}\text{, }\quad \text{for }x=(x_i)_{i\geq 1}\text{, }y=(y_i)_{i\geq 1}\in \hat{\Z}_{\sigma,a}\text{.}\] In this way $\hat{\Z}_{\sigma,a}$ is a compact Cantor set isomorphic to the subshift of finite type formed by the left-infinite sequences $(\delta(p_i))_{i\geq 0}\in\phantom{}^\omega\mathcal{D}$ such that $(p_i)_{i\geq 0}$ is the labeling of a left-infinite walk $\cdots\xrightarrow{p_2} a_2 \xrightarrow{p_1} a_1 \xrightarrow{p_0} a$ in the prefix automaton of $\sigma$.

On the other hand we can equip $\hat{\Z}_{\sigma,a}$ with the topology defined by the distance $d(x,y)=\norm{\Phi'(x)-\Phi'(y)}$. An element of $\hat{\Z}_{\sigma,a}$ can be represented as an infinite sum
$\sum_{i\geq 0}\delta(p_i)\alpha^i$, where each truncation $\sum_{i=0}^\ell\delta(p_i)\alpha^i$ is contained in $\Z_{\sigma,a}^{(\ell+1)}$. Extending $\Phi'$ continuously, we can use this mapping in order to map the infinite sums in $\hat{\Z}_{\sigma,a}$ to $K_\sigma$. The image  $\Phi'(\hat{\Z}_{\sigma,a})$ is exactly the Dumont-Thomas subtile $\mathcal{R}_\sigma(a)$.

The definition of the projective limit $\hat{\Z}_{\sigma,a}$ encompasses these two points of view, thus we can consider one of its elements either as an admissible left infinite sequence or as an infinite sum. Another advantage is that we have all the approximations (\emph{i.e.}, the truncations) of the elements included in this vision. In both interpretations multiplication by $\alpha$ acts as a contraction. We come back to $\hat{\Z}_{\sigma,a}$ in Section~\ref{sec-adic}.

\end{section}

\begin{section}{Geometric realizations of a substitution}

The aim of this section is to generalize the formalism of one-dimensional geometric realizations of substitutions and their dual (see Arnoux and Ito~\cite{AI:00}) to the non-unit case. This will lead to another definition of Rauzy fractal and stepped surface for a non-unit Pisot substitution.

\subsection{The maps $E_1$ and $E_1^*$}\label{subsec51}

A consequence of dealing with non-unit Pisot substitutions is that $M_\sigma:\mathbb{Z}^n\rightarrow\mathbb{Z}^n$ is not invertible.
We define a suitable bigger space where $M_\sigma$ acts and is invertible.

Indeed, let $Z=\bigcup_{k\geq 0}M_\sigma^{-k}\mathbb{Z}^n$. We denote by $\mathcal{F}$ the infinite dimensional real vector space of the maps $Z\times \mathcal{A}\rightarrow\R$ that take value zero except for a finite set.
For $\V{x}\in Z$, $a\in\mathcal{A}$ denote by $[\V{x},a]$ the element of $\mathcal{F}$ which takes value $1$ at $(\V{x},a)$ and $0$ elsewhere; the set $\{[\V{x},a] : \V{x}\in Z, a\in\mathcal{A}\}$ is a basis of $\mathcal{F}$. The support of an element of $\mathcal{F}$ is the set of $(\V{x},a)$ on which it is not zero.

We define the \emph{one-dimensional geometric realization} $E_1$ of $\sigma$ on $\mathcal{F}$ by
\[
E_1[\V{y},b]=\sum_{b \stackrel{p}{\longrightarrow} a}[M_\sigma \V{y}-\V{P}(p),a]\text{.}
\]
Denote by $\mathcal{F}^*$ the space of linear forms on $\mathcal{F}$ with finite support, \emph{i.e.}, those linear forms for which there exists a finite subset $X$ of $Z\times\mathcal{A}$ such that the form is $0$ on any element of $\mathcal{F}$ whose support does not intersect $X$; this space admits as basis the set $\{[\V{x},a]^* : \V{x}\in Z, a\in\mathcal{A}\}$. We can associate to $E_1$ its \emph{dual} map $E_1^*$ on $\mathcal{F}^*$.
\begin{proposition}
The dual map $E_1^*$ is defined on $\mathcal{F}^*$ by
\begin{equation}\label{geosub}
E_1^*[\mathbf{x},a]^*=\sum_{b\stackrel{p}{\longrightarrow} a}[M_\sigma^{-1} (\mathbf{x}+\mathbf{P}(p)),b]^*\text{.} \end{equation}
\end{proposition}
\begin{proof}
By definition of the dual map we have
\begin{align*}
\langle E_1^*[\V{x},a]^*,[\V{y},b] \rangle &=\langle [\V{x},a]^*,E_1[\V{y},b] \rangle \\
&=\langle [\V{x},a]^*, \sum_{b\stackrel{p}{\longrightarrow} c}[M_\sigma \V{y}-\V{P}(p),c] \rangle\text{.}
\end{align*}
This product can take only $0$ and $1$ as values and is not zero if and only if $c=a$ and $M_\sigma \V{y}-\V{P}(p)=\V{x}$. Since $M_\sigma$  is invertible as a map from $Z$ to $Z$, this implies $\V{y}=M_\sigma^{-1}(\V{x}+\V{P}(p))$.
\end{proof}

Denote by $\mathbb{H}$ the hyperplane of $\R^n$ orthogonal to $\V{v}_\alpha$, and let $\mathbb{H}^\geq$ be the set $\{\V{x}\in\R^n : \langle \V{x},\V{v}_\alpha\rangle\geq 0 \}$, \emph{i.e.}, the half-space above $\mathbb{H}$. The half-space $\mathbb{H}^<$ strictly below $\mathbb{H}$ is defined in the same fashion.
We look for all those $\V{x}\in Z$ that are close to the hyperplane $\mathbb{H}$, in particular, we want $\V{x}\in \mathbb{H}^\geq$ and $\V{x}-\V{e}_a\in \mathbb{H}^<$ to be true for some $a\in\mathcal{A}$.
One problem arises: we get too many points with this property and we would like to have a discrete set. Thus we enlarge again our space by adding the non-Archimedean completions in order to distribute the points with respect to their $\p$-adic height. Define $Z_\text{ext}=Z\times\prod_{\mathfrak{p}\mid(\alpha)}K_\mathfrak{p}$ and a map
\begin{align*}
\Psi:Z & \longrightarrow Z_\text{ext}\text{,} \\
\V{x} & \longmapsto (\V{x},\langle\V{x},\V{v}_\alpha\rangle,\ldots,\langle\V{x},\V{v}_\alpha\rangle)\text{.}
\end{align*}
As $M_\sigma^t$ acts as a multiplication by $\alpha$ along $\mathbf{v}_\alpha$ we can extend $M_\sigma$ naturally to $Z_\text{ext}$. Indeed, it acts as a
multiplication by $\alpha$ in each of the non-Archimedean factors. Without  risk of confusion this extension will again be denoted by $M_\sigma$.
Moreover, set $\mathbb{H}_\text{ext} = \mathbb{H} \times\prod_{\mathfrak{p}\mid(\alpha)}K_\mathfrak{p}$. This is the space where the subtiles will live.
We can also carry over the definition of $\mathcal{F}$ to the space $\mathcal{F}_{\text{ext}}$ of mappings from $\Psi(Z)\times\mathcal{A}$ to $\mathbb{R}$:
\[ 
\xymatrix{Z\times\mathcal{A} \ar[d]_{[\V{x},a]}\ar[r]^\Psi & \Psi(Z)\times\mathcal{A} \ar[dl]^{[\Psi(\V{x}),a]} \\ \R & } 
\]
Similarly we carry over the space $\mathcal{F}^*$ to $\mathcal{F}^*_\text{ext}$. Thus we can extend the maps $E_1$ and $E_1^*$ respectively on $\mathcal{F}_\text{ext}$ and $\mathcal{F}^*_\text{ext}$, denoting them again by the same names.

We can interpret geometrically an element $[\V{x},a]\in \mathcal{F}_\text{ext}$ as a segment $\{\V{x}-\theta\Psi(\V{e}_a) : \theta\in[0,1]\}$ in $Z_\text{ext}$.
In this way we get the \textquotedblleft broken line\textquotedblright\, with reversed orientation associated with the fixed point $u=u_0u_1\ldots$ of the substitution in terms of $E_1$:
\begin{equation}\label{LuStar}
-L_u=\bigcup_{n\geq 0}E_1^{n}[\V{0},u_0]\text{.}
\end{equation}

From now on we will only consider elements of $\mathcal{F}^*$ that are of the form $\sum_k  [\V{x}_k,a_k]^*$ (all the coefficients will be $1$). Thus we shall
consider $E_1^*$ as a transformation acting directly on subsets of $\Psi(Z)\times\mathcal{A}$.

\subsection{$E_1^*$-subtiles and the stepped surface}

\begin{definition}\label{subtiles2}
Let $\pi_{\mathbf{u}}: \R^n\to \mathbb{H}$ be the projection onto $\mathbb{H}$ along the right eigenvector $\mathbf{u}_\alpha$ of $M_\sigma$ corresponding to $\alpha$, renormalized such that $\langle \V{u}_\alpha , \V{v}_\alpha\rangle=1$. Let us consider $\pi_\mathbf{u}\times\text{id}: Z_\text{ext}\to \mathbb{H}_\text{ext}$, where $\text{id}$ is the identity map on $\prod_{\p\mid(\alpha)} K_\p$.
The $E_1^*$-\emph{subtiles} associated with an irreducible Pisot substitution $\sigma$ are defined as
\begin{equation}\label{sub2}
\mathcal{T}_\sigma(a)=\mathop{\rm lim_H}_{k\to\infty} (\pi_{\mathbf{u}}\times\text{id})(M_\sigma^k\cdot (E_1^*)^k[\V{0},a]^*)
\qquad (a\in\mathcal{A})\text{.}
\end{equation}
Furthermore the \emph{central tile} of the substitution is defined as
\begin{equation} \mathcal{T}_\sigma=\bigcup_{a\in\mathcal{A}}\mathcal{T}_\sigma(a)\text{.}
\end{equation}
\end{definition}

Observe that the limit in Equation (\ref{sub2}) exists by a similar argument as the one in \cite[Lemma 3.4]{BSSST:10}.

We now generalize the notion of \emph{stepped surface} given in \cite{AI:00} for the unit case. Define it as the set
\begin{equation}\label{step}  \mathcal{G}_\text{ext}=\{[\Psi(\V{x}),a]^*\in Z_\text{ext}\times\mathcal{A} : \V{x}\in \mathbb{H}^\geq,\;\V{x}-\V{e}_a\in \mathbb{H}^<\}\text{.} \end{equation}
We will provide in Section \ref{7} connections between the two different notions of tiles and stepped surfaces seen so far.
\end{section}

\begin{section}{Model sets}\label{sec:MS}
In this section we describe Sing's approach (see \cite{Sin:06}) of studying Rauzy fractals from the view point of cut-and-project schemes and model sets. We also want to justify here the terminology of calling the set $\Gamma$ defined in \eqref{GammaDef} a {\it translation set}.

\begin{definition}[{see {\it e.g.}~\cite[Section~5]{BM:04}}]
A \emph{cut and project scheme}, or \emph{CPS}, is a triple $(G,H,\tilde{L})$
consisting of a locally compact group $G$  which is the union of countably many compact sets, called the \emph{physical space}, a locally compact group $H$ called the \emph{internal space} and a lattice $\tilde{L}$ in $G\times H$, such that two natural projections $\pi_1:G\times H\rightarrow G$, $\pi_2:G\times H\rightarrow H$ satisfy the following properties:
\begin{enumerate}[(i)]
\item The restriction $\pi_1|_{\tilde{L}}$ is injective.
\item The image $\pi_2(\tilde{L})$ is dense in $H$.
\end{enumerate}
Setting $L=\pi_1(\tilde{L})$, the \emph{star-map} is defined as $(\cdot)^\star=\pi_2\circ(\pi_1|_{\tilde{L}})^{-1}:L\rightarrow H$, and is well-defined on $L$ by injectivity of $\pi_1|_{\tilde{L}}$. With these definitions, we have $\tilde{L}=\{(x,x^\star) : x\in L\}$. We say that a cut and project scheme $(G,H,\tilde{L})$ is \emph{symmetric} if $(H,G,\tilde{L})$ is a cut and project scheme as well.
Given a cut and project scheme $(G,H,\tilde{L})$ and a subset $W\subset H$ define $\Lambda(W)=\{x\in L: x^\star\in W\}$. We call such a set $\Lambda(W)$, or more generally any translate of such a set, a \emph{model set} if $W$ is a non-empty compact set and $W=\overline{\text{int}(W)}$. We say that a model set is \emph{regular} if $\partial W$ has zero Haar measure. In addition, we say that a set $Q$ is an {\it inter model set} if $\Lambda(\text{int}(W))\subset Q \subset \Lambda(W)$.
\end{definition}

A finite family $\underline{\varLambda}=(\varLambda_1,\ldots,\varLambda_n)$ is a \emph{multi-component Delone set} if $\text{supp}(\underline{\varLambda})=\bigcup_{a=1}^n\varLambda_a$ is a Delone set. Similarly we say that $\underline{\varLambda}$ is a \emph{multi-component model set} if each $\varLambda_a=\Lambda(W_a)$ is a model set with respect to the same CPS. The next results can be found in \cite{Sin:06}.

\begin{lemma}\label{le:bef}
$\Phi(V\cdot\Z[\alpha^{-1}])=\alpha\Phi(V\cdot\Z[\alpha^{-1}])$.
\end{lemma}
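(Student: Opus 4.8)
The plan is to reduce the adelic identity to a purely module-theoretic statement about $V\cdot\Z[\alpha^{-1}]$ inside $K=\Q(\alpha)$. Since $\Phi$ is a ring homomorphism and $K$ acts on $K_\alpha$ by diagonal multiplication, one has $\alpha\,\Phi(x)=\Phi(\alpha x)$ for every $x\in K$; hence $\alpha\,\Phi(V\cdot\Z[\alpha^{-1}])=\Phi(\alpha\cdot V\cdot\Z[\alpha^{-1}])$. Thus it suffices to prove the set equality $\alpha\cdot(V\cdot\Z[\alpha^{-1}])=V\cdot\Z[\alpha^{-1}]$ as subsets of $\Q(\alpha)$.

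First I would record that, as observed just before Lemma~\ref{finind}, the set $W:=V\cdot\Z[\alpha^{-1}]$ is a (fractional) ideal of $\Z[\alpha^{-1}]$, in particular a $\Z[\alpha^{-1}]$-module. The whole point is then the elementary observation that both $\alpha$ and $\alpha^{-1}$ lie in $\Z[\alpha^{-1}]$. For $\alpha^{-1}$ this is immediate. For $\alpha$, I would use that $\alpha$ is an algebraic integer: writing its minimal polynomial as $x^n+c_{n-1}x^{n-1}+\cdots+c_1x+c_0$ with $c_i\in\Z$ and dividing the relation $\alpha^n+c_{n-1}\alpha^{n-1}+\cdots+c_0=0$ by $\alpha^{n-1}$ gives $\alpha=-\big(c_{n-1}+c_{n-2}\alpha^{-1}+\cdots+c_0\alpha^{-(n-1)}\big)$, which is a $\Z$-linear combination of $1,\alpha^{-1},\dots,\alpha^{-(n-1)}$ and hence lies in $\Z[\alpha^{-1}]$.

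Given this, both inclusions are immediate from the module structure: $\alpha\in\Z[\alpha^{-1}]$ yields $\alpha W\subseteq W$, while $\alpha^{-1}\in\Z[\alpha^{-1}]$ yields $\alpha^{-1}W\subseteq W$, and multiplying the latter inclusion by $\alpha$ gives $W\subseteq\alpha W$. Combining the two inclusions gives $\alpha W=W$, and applying $\Phi$ as above finishes the proof.

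There is essentially no serious obstacle here; the only piece of genuine content is the membership $\alpha\in\Z[\alpha^{-1}]$, and the one point to be careful about is the bookkeeping that translates multiplication by $\alpha$ in $K_\alpha$ into multiplication by $\alpha$ in $\Q(\alpha)$ under $\Phi$, so that the statement really becomes an assertion about the module $W$ rather than about the embedding.
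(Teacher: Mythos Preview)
Your proof is correct and takes a genuinely different, more elementary route than the paper's.

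The paper argues only one inclusion algebraically (namely $\alpha\Phi(V\cdot\Z[\alpha^{-1}])\subset\Phi(V\cdot\Z[\alpha^{-1}])$), and then obtains equality by a measure-theoretic argument: using that $\Phi(V\cdot\Z[\alpha^{-1}])$ is a lattice in $K_\alpha$ (Lemma~\ref{delalpha}), it compares the covolumes of the lattice and its $\alpha$-scaled sublattice via the product formula $\prod_{\mathfrak{p}\in S_\alpha}|\alpha|_{\mathfrak{p}}=1$, concluding that a sublattice of equal covolume must coincide with the ambient lattice. Your argument instead stays entirely inside $\Q(\alpha)$ and observes that $\alpha$ is a \emph{unit} in $\Z[\alpha^{-1}]$ (via the minimal-polynomial identity), so multiplication by $\alpha$ is automatically a bijection on any $\Z[\alpha^{-1}]$-module. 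This is shorter, avoids invoking the lattice structure and the product formula, and makes the result independent of the ambient adelic setup. The paper's approach, on the other hand, fits more naturally into the surrounding narrative where the product formula and the Haar measures on $K_\alpha$ are being put to work, and it foreshadows the covolume computations used later.
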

\begin{proof}
We know that $\alpha V\cdot\Z[\alpha^{-1}]\subset V\cdot\Z[\alpha^{-1}]$, therefore $\alpha\Phi(V\cdot\Z[\alpha^{-1}])\subset \Phi(V\cdot\Z[\alpha^{-1}])$. The set $\alpha\Phi(V\cdot\Z[\alpha^{-1}])$ is a sublattice of $\Phi(V\cdot\Z[\alpha^{-1}])$. Let $D'$ be a fundamental domain of $K_\alpha / \alpha\Phi(V\cdot\Z[\alpha^{-1}])$ and recall that $D$ is a fundamental domain of $K_\alpha / \Phi(V\cdot\Z[\alpha^{-1}])$ (see Lemma \ref{fd}). Then, by (\ref{serre}) and (\ref{product})
\[ \mu_{K_\alpha}(D')=\prod_{\p\in S_\alpha}\abs{\alpha}_\p \cdot \mu_{K_\alpha}(D) = \mu_{K_\alpha}(D)\text{,} \] and the claim follows.
\end{proof}

\begin{proposition}
$(\R,K_\sigma,\Phi(V\cdot\Z[\alpha^{-1}]))$ forms a symmetric cut and project scheme:
\begin{align*}
&\mathbb{R}&\stackrel{\pi_{\p_1}}{\longleftarrow} &\qquad\qquad\quad K_\alpha & \stackrel{\pi}{\longrightarrow} & \quad K_\sigma=
\prod_{\mathfrak{p}\in S_\alpha\setminus\{\p_1\}} K_\mathfrak{p}  \\
&\cup & & \qquad\qquad\quad\cup & & \qquad\qquad\cup   \\
V\cdot \Z& [\alpha^{-1}]&\stackrel{1-1}{\longleftrightarrow} &\qquad\Phi(V\cdot\Z[\alpha^{-1}])
&\stackrel{1-1}{\longleftrightarrow} & \quad\Phi'(V\cdot\Z[\alpha^{-1}])
\end{align*}
\end{proposition}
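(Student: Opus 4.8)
The plan is to verify the two defining properties of a CPS for the triple $(\R,K_\sigma,\Phi(V\cdot\Z[\alpha^{-1}]))$, identifying the physical space $\R$ with the factor $K_{\p_1}\cong\R$ of $K_\alpha=K_{\p_1}\times K_\sigma$, the internal space with $K_\sigma$, and the projections $\pi_1,\pi_2$ with $\pi_{\p_1}$ and $\pi$. First note that $\tilde L:=\Phi(V\cdot\Z[\alpha^{-1}])$ is a lattice in $K_\alpha$ by Lemma~\ref{delalpha} and the remark following it, and that $\R$ and $K_\sigma$ are $\sigma$-compact locally compact groups, so the ambient objects fit the definition. Condition~(i) is immediate: the composition $\pi_{\p_1}\circ\Phi$ restricted to $K$ equals the real embedding $\tau_{\p_1}:K\to\R$ attached to $\p_1$, which is injective; hence $\pi_1|_{\tilde L}$ is injective.

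The heart of the proof is condition~(ii), the density of $\pi(\tilde L)=\Phi'(V\cdot\Z[\alpha^{-1}])$ in $K_\sigma$. I would let $N=\overline{\Phi'(V\cdot\Z[\alpha^{-1}])}$, a closed subgroup of $K_\sigma$, and argue in three steps. First, $K_\sigma/N$ is compact: every $\ell\in\tilde L$ lies in $K_{\p_1}\times N$, so $K_\sigma/N\cong K_\alpha/(K_{\p_1}\times N)$ is a continuous image of the compact group $K_\alpha/\tilde L$. Second, Lemma~\ref{le:bef} gives $\alpha\tilde L=\tilde L$; since $\pi$ commutes with multiplication by $\alpha$ and the latter is a homeomorphism of $K_\sigma$, the subgroup $N$ is $\alpha$-invariant, i.e. $\alpha N=N$, so multiplication by $\alpha$ descends to a continuous automorphism $\bar\alpha$ of the compact group $K_\sigma/N$. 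Third, because multiplication by $\alpha$ is a uniform contraction on $K_\sigma$, every $z\in K_\sigma$ satisfies $\alpha^kz\to 0$, whence $\bar\alpha^k\bar z\to\bar 0$ for every $\bar z\in K_\sigma/N$ by continuity of the quotient map. An automorphism of a compact group preserves its normalized Haar measure $m$, so for every continuous $f$ we would have $\int f\,dm=\int (f\circ\bar\alpha^k)\,dm\to f(\bar 0)$ as $k\to\infty$; hence $m$ is the point mass at $\bar 0$ and $K_\sigma/N$ is trivial. This yields $N=K_\sigma$, establishing condition~(ii).

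For symmetry I would check that $(K_\sigma,\R,\tilde L)$ is itself a CPS. Injectivity of $\pi|_{\tilde L}$ follows exactly as for $\pi_{\p_1}$: the set $S_\alpha\setminus\{\p_1\}$ is non-empty (it contains the primes dividing $(\alpha)$), and the embedding of $K$ into any single factor $K_\p$ with $\p\neq\p_1$ is injective, so $\Phi'$ is injective on $K$. Density of $\pi_{\p_1}(\tilde L)=\tau_{\p_1}(V\cdot\Z[\alpha^{-1}])$ in $\R$ is classical: this subgroup contains the elements $\tau_{\p_1}(v_1)\alpha^{-k}$, which are non-zero and tend to $0$ since $\alpha>1$, so the subgroup is non-discrete, and a non-discrete subgroup of $\R$ is dense. (Alternatively one could rerun the contraction argument of the previous paragraph with $\alpha^{-1}$, which contracts on $\R$.)

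The only genuinely delicate point is the density in $K_\sigma$. Its essential input is the self-similarity $\alpha\tilde L=\tilde L$ of Lemma~\ref{le:bef} together with the contractivity of multiplication by $\alpha$ on $K_\sigma$; these jointly forbid any proper closed $\alpha$-invariant cocompact subgroup, which is exactly what makes the projected lattice dense. Everything else reduces to routine checks against the CPS axioms.
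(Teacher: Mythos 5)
Your proof is correct, and it rests on the same two essential inputs as the paper's argument: the lattice property of $\Phi(V\cdot\Z[\alpha^{-1}])$ (Lemma~\ref{delalpha}) and the self-similarity $\alpha\Phi(V\cdot\Z[\alpha^{-1}])=\Phi(V\cdot\Z[\alpha^{-1}])$ of Lemma~\ref{le:bef} combined with the contractivity of multiplication by $\alpha$ on $K_\sigma$. The execution of the key density step differs, though. The paper argues metrically and more directly: relative denseness of the projected lattice gives $B(0,R)+\Phi'(V\cdot\Z[\alpha^{-1}])=K_\sigma$ for some $R>0$, and multiplying this identity by $\alpha^k$ shrinks the radius to $0$ while leaving the translation set fixed by Lemma~\ref{le:bef}, which is already density. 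You instead pass to the closed subgroup $N=\overline{\Phi'(V\cdot\Z[\alpha^{-1}])}$, show that $K_\sigma/N$ is compact, and use invariance of its normalized Haar measure under the induced automorphism $\bar\alpha$ together with the pointwise convergence $\bar\alpha^k\bar z\to\bar 0$ to force the quotient to be trivial. Both are valid; the paper's version is shorter and avoids measure theory, while yours isolates the cleaner structural fact that no proper closed $\alpha$-invariant cocompact subgroup of $K_\sigma$ can exist. For the density of $V\cdot\Z[\alpha^{-1}]$ in $\R$ (needed for the symmetric CPS) the paper invokes Kronecker's theorem for the rank-$n$ group $V$, whereas you observe that the subgroup contains the nonzero elements $\tau_{\p_1}(v_1)\alpha^{-k}\to 0$ and is therefore non-discrete, hence dense --- a slightly more elementary route. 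One cosmetic remark: your justification that $S_\alpha\setminus\{\p_1\}\neq\emptyset$ via the primes dividing $(\alpha)$ is specific to the non-unit case (the setting of the paper); in general one would also point to the remaining Archimedean places when $n\geq 2$.
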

\begin{proof}
The set $\Phi(V\cdot\Z[\alpha^{-1}])$ is a lattice by Lemma \ref{delalpha}. The projections $\pi_{\p_1}$ and $\pi$ are injective on $\Phi(V\cdot\Z[\alpha^{-1}])$ by construction. By Kronecker's theorem $V$ is dense in $\R$ and so is $V\cdot\Z[\alpha^{-1}]$. It remains to prove that $\Phi'(V\cdot\Z[\alpha^{-1}])$ is dense in $K_\sigma$ (see \cite[Lemma~6.55]{Sin:06}). Since $\Phi(V\cdot\Z[\alpha^{-1}])$ is a lattice in $K_\alpha$, it is relatively dense. Hence $\Phi'(V\cdot\Z[\alpha^{-1}])$ must be relatively dense in $K_\sigma$, \emph{i.e.}, there exists a radius $R> 0$ such that $B(0,R)+\Phi'(V\cdot\Z[\alpha^{-1}])=K_\sigma$. Multiplying this equation by $\alpha$ (which is equivalent to a contraction in $K_\sigma$) and by Lemma \ref{le:bef} we get the denseness.
\end{proof}

For $(Y,d)$ metric space, let $\mathcal{H}(Y)$ be the space of non-empty compact subsets of $Y$, equipped with the Hausdorff metric $d_\mathcal{H}$. In the model set setting, Sing \cite{Sin:06} associates to each primitive substitution $\sigma$ an \emph{expanding matrix function system} $\boldsymbol{\varTheta}$ on $\R^n$ defined by
\begin{equation}
\varTheta_{ab} = \bigcup_{b\stackrel{p}{\longrightarrow}a} \{t_{\delta(p)}\circ f_0 \}\text{, }\quad \text{for }
a,b\in\mathcal{A}\text{,}
\end{equation}
where $f_0(x)=\alpha x$ and $t_{\delta(p)}(x)=x+\delta(p)$. Then its substitution matrix $\boldsymbol{S}\boldsymbol{\varTheta}=(\abs{\varTheta_{ab}})_{a,b\in\mathcal{A}}$ equals $M_\sigma$.
Given such a $\boldsymbol{\varTheta}$ we can define the \emph{adjoint iterated function system} $\boldsymbol{\varTheta}^\#$ on $\mathcal{H}(\R)^n$ by
\begin{equation}
\varTheta^\#_{ab} = \bigcup_{a\stackrel{p}{\longrightarrow}b} \{f^{-1}_0\circ t_{\delta(p)} \}
\text{, }\quad \text{for }a,b\in\mathcal{A}\text{.}
\end{equation}
Then obviously $\boldsymbol{S}\boldsymbol{\varTheta}^\#=M_\sigma^t$. Note that $\boldsymbol{\varTheta}^\#$ is just a way to write a {\it graph directed iterated function system} in the sense of Mauldin and Williams~\cite{MW:88} in matrix form. By the general theory of graph directed iterated function systems (see \cite{MW:88}) there exists a unique attractor for $\boldsymbol{\varTheta}^\#$, and it is easy to see that it is $\underline{A}=(A_a)_{a\in\mathcal{A}}\subset\mathcal{H}(\R)^n$, where the $A_a=[0,\delta(a)]$ are called \emph{natural intervals}.

Geometrically we can interpret $\sigma$ as a \emph{tiling of the line}: given a fixed point $u=u_0u_1\cdots\in\mathcal{A}^\omega$ of $\sigma$, we represent each letter $a$ by the ``type $a$'' interval $A_a$; starting with the first of these intervals we can construct the entire line inflating repetitively $A_a$ by $\alpha$ and subdividing it into the corresponding intervals given by the substitution (compare this to the action of the one-dimensional geometric realization $E_1$ defined in Section~\ref{subsec51}; in particular, we refer to \eqref{LuStar}).

Given the tiling of the line, denote the set of left endpoints of the type $a$ intervals by $\varLambda_a$. Precisely, define $\underline{\varLambda}=(\varLambda_a)_{a\in\mathcal{A}}$ by
\begin{equation}\label{lambda} \underline{\varLambda}=\bigcup_{k\geq 0} \boldsymbol{\varTheta}^k(\emptyset,\cdots,\emptyset,\{0\},
\emptyset,\cdots,\emptyset)^t\text{,} \end{equation} where $\{0\}$ is at position $u_0$.
Then $\underline{\varLambda}=(\varLambda_a)_{a\in\mathcal{A}}$ is a \emph{substitution multi-component Delone set}, \emph{i.e.}, $\underline{\varLambda}=\boldsymbol{\varTheta}(\underline{\varLambda})$ and together with $\underline{A}=\boldsymbol{\varTheta^\#}(\underline{A})$ this forms the \emph{representation with natural intervals} $\underline{\varLambda}+\underline{A}$ of a fixed point $u$ of $\sigma$.\footnotemark[2]

\begin{ex}
Consider the substitution of Example \ref{ex1}, $\sigma(1)=121$, $\sigma(2)=11$. Then we obtain the following expanding matrix function system $\boldsymbol{\varTheta}$ and its adjoint iterated function system $\boldsymbol{\varTheta}^\#$:
\[  \boldsymbol{\varTheta}=\begin{pmatrix} \{f_0,f_{\delta(12)}\} & \{f_0,f_{\delta(1)}\} \\ \{f_{\delta(1)}\} & \emptyset \end{pmatrix}\text{, }\qquad
\boldsymbol{\varTheta}^\#=\begin{pmatrix} \{g_0,g_{\delta(12)}\} & \{g_{\delta(1)}\} \\ \{g_0,g_{\delta(1)}\} & \emptyset \end{pmatrix}\text{,}  \] where $f_d(x)=\alpha x+ d$ and $g_d(x)=\alpha^{-1}(x+d)$, for $d\in\mathcal{D}=\{\delta(\epsilon),\delta(1),\delta(12)\}$.
We get the tiling of the line applying repetitively the process of inflation and subdivision on the interval $[0,\delta(1)]$: 
\[
\begin{tikzpicture}[scale=0.6]
\draw (0,-.1)--(0,.1)node[above]{$0$} (2.732/2,-.1)--(2.732/2,.1)node[above]{$\frac{\alpha}{2}$};
\draw[very thick,blue]  (0,0)--(2.732/2,0);
\end{tikzpicture} \;\stackrel{\cdot\alpha}{\longmapsto}\;
\begin{tikzpicture}[scale=0.6]
\draw (0,-.1)--(0,.1)node[above]{$0$} (2.732/2,-.1)--(2.732/2,.1) (2.732/2+1,-.1)--(2.732/2+1,.1) (2.732+1,-.1)--(2.732+1,.1)node[above]{$\alpha+1$};
\draw[very thick,blue]  (0,0)--(2.732/2,0) (2.732/2+1,0)--(2.732+1,0);
\draw[thick,green] (2.732/2,0)--(2.732/2+1,0);
\end{tikzpicture} \;\stackrel{\cdot\alpha}{\longmapsto} \;
\begin{tikzpicture}[scale=0.6]
\draw (0,-.1)--(0,.1)node[above]{$0$} (2.732/2,-.1)--(2.732/2,.1) (2.732/2+1,-.1)--(2.732/2+1,.1) (2.732+1,-.1)--(2.732+1,.1)
(3.732+2.732/2,-.1)--(3.732+2.732/2,.1) (3.732+2.732,-.1)--(3.732+2.732,.1)
(3.732+3*2.732/2,-.1)--(3.732+3*2.732/2,.1) (4.732+3*2.732/2,-.1)--(4.732+3*2.732/2,.1) (4.732+2*2.732,-.1)--(4.732+2*2.732,.1)node[above]{$3\alpha+2$};
\draw[very thick,blue]  (0,0)--(2.732/2,0) (2.732/2+1,0)--(2.732+1,0) (2.732+1,0)--(3.732+2.732/2,0)
(3.732+2.732/2,0)--(3.732+2.732,0) (3.732+2.732,0)--(3.732+3*2.732/2,0) (4.732+3*2.732/2,0)--(4.732+2*2.732,0);
\draw[thick,green] (2.732/2,0)--(2.732/2+1,0) (3.732+3*2.732/2,0)--(4.732+3*2.732/2,0);
\end{tikzpicture} \;\stackrel{\cdot\alpha}{\longmapsto}\; \cdots
 \]
Furthermore we have that the sets $\varLambda_a$ of left endpoints of the type $a$ intervals, for $a\in\mathcal{A}$, satisfy the point set equations
\begin{align*}
\varLambda_1 &= \alpha\varLambda_1 \cup  \alpha\varLambda_1 + \delta(12) \cup \alpha\varLambda_2 \cup \alpha\varLambda_2 + \delta(1)\text{,}   \\
\varLambda_2 &= \alpha\varLambda_1 + \delta(1)\text{,}
\end{align*}
and the natural intervals satisfy
\begin{align*}
A_1 &= \alpha^{-1}A_1 \cup  \alpha^{-1}(A_1 + \delta(12)) \cup \alpha^{-1}(A_2+\delta(1))\text{,}   \\
A_2 &= \alpha^{-1}A_1 \cup \alpha^{-1}(A_1+\delta(1))\text{.}
\end{align*}
\end{ex}
\footnotetext[2]{We could have defined $\boldsymbol{\varTheta}$ using the functions $t_{-\delta(p)}\circ f_0$, to obtain by duality $\boldsymbol{\varTheta}^\#$. 
In this way, we would have obtained a negative tiling of the line $-(\underline{\varLambda}+\underline{A})$, with $\varLambda_a$ set of right endpoints of the type $a$ intervals.}

We can extend $\boldsymbol{\varTheta}$ to the graph directed iterated function system $\boldsymbol{\varTheta}^\star$ on $\mathcal{H}(K_\sigma)^n$
 relative to the CPS $(\R,K_\sigma,\Phi(V\cdot\Z[\alpha^{-1}]))$ with star-map $\Phi'$. 
As done before we can consider the adjoint $(\boldsymbol{\varTheta}^\star)^\#$ on $K_\sigma^n$ relative to the CPS $(\R,K_\sigma,\Phi(V\cdot\Z[\alpha^{-1}]))$, 
which is an expanding matrix function system. This can now be used to define Rauzy fractals in this context.

\begin{definition}
Let $\underline{\varOmega}=(\varOmega_a)_{a\in\mathcal{A}}\subset\mathcal{H}(K_\sigma)^n$ be the solution of the graph directed iterated function system $\boldsymbol{\varTheta}^\star(\underline{\varOmega})=\underline{\varOmega}$. $\underline{\varOmega}$ is called the \emph{dual prototile}.
The regular multi-component inter model set $\underline{\varUpsilon}=(\varUpsilon_a)_{a\in\mathcal{A}}$ in $K_\sigma^n$ associated with the CPS $(K_\sigma,\R,\Phi(V\cdot\Z[\alpha^{-1}]))$, defined by $\varUpsilon_a=\Lambda([0,\delta(a)))$, is called \emph{translation set}.
\end{definition}

Observe that, for $a\in\mathcal{A}$,
\begin{equation}\label{tran}
\varUpsilon_a=\Lambda([0,\delta(a)))=\{\pi(\V{z})\in K_\sigma: \V{z}=(z_\p)_\p\in \Phi(V\cdot\Z[\alpha^{-1}]), z_{\p_1}\in [0,\delta(a)) \}=\Phi'(\text{Frac}(\sigma,a))\text{,}
\end{equation}
which shows that the translation set $\Gamma=\text{supp}(\underline{\varUpsilon})$.

\begin{lemma}\label{del}
$\Gamma$ is a Delone set.
\end{lemma}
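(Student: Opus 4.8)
The plan is to recognize $\Gamma=\mathrm{supp}(\underline{\varUpsilon})$ as the support of a multi-component model set and to verify the two defining properties of a Delone set directly from the lattice structure established in Lemma~\ref{delalpha}. By \eqref{tran} we have $\Gamma=\bigcup_{a\in\mathcal A}\Phi'(\mathrm{Frac}(\sigma,a))=\Phi'(\mathrm{Frac}(\sigma))=\Phi'(V\cdot\Z[\alpha^{-1}]\cap[0,w))$, where $w=\max_{a\in\mathcal A}\delta(a)$; thus $\Gamma=\Lambda(W)$ is the model set with window $W=[0,w)$ for the CPS $(K_\sigma,\R,\Phi(V\cdot\Z[\alpha^{-1}]))$. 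Throughout I identify $K_\alpha=K_{\p_1}\times K_\sigma$ with $K_{\p_1}\cong\R$, so that $\pi_{\p_1}(\Phi(x))=x$ and $\pi(\Phi(x))=\Phi'(x)$ for $x\in\Q(\alpha)$. I first record that $\Phi'$ is injective on $V\cdot\Z[\alpha^{-1}]$: if $\Phi'(v)=0$ then $v$ vanishes in some completion $K_\p$ with $\p\in S_\alpha\setminus\{\p_1\}$, and since the embedding $\Q(\alpha)\hookrightarrow K_\p$ is injective this forces $v=0$.

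For uniform discreteness I would estimate differences of points of $\Gamma$. Two distinct points of $\Gamma$ are of the form $\Phi'(x),\Phi'(y)$ with $x,y\in\mathrm{Frac}(\sigma)$, so that $v:=x-y\in V\cdot\Z[\alpha^{-1}]$ satisfies $\abs{v}<w$ and $v\neq0$. If in addition $\norm{\Phi'(v)}\le 1$, then the lattice point $\Phi(v)=(v;\Phi'(v))$ lies in $[-w,w]\times\overline{B(\V{0},1)}\subset K_\alpha$, which is compact because it is bounded in each of the finitely many local factors (for a finite prime a bounded set is contained in a compact $\p^{-k}\mathcal O_\p$, and for an infinite prime boundedness gives a compact closure). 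Since $\Phi(V\cdot\Z[\alpha^{-1}])$ is uniformly discrete by Lemma~\ref{delalpha}, this compact set contains only finitely many lattice points, hence $\{\Phi'(v):v\in V\cdot\Z[\alpha^{-1}],\ \abs{v}<w,\ \norm{\Phi'(v)}\le1\}$ is finite; as its nonzero elements have strictly positive norm (by injectivity of $\Phi'$), their minimum $r_0>0$ bounds $\norm{\Phi'(x)-\Phi'(y)}$ from below whenever $\Phi'(x)\neq\Phi'(y)$. Thus every ball of radius $r_0/2$ contains at most one point of $\Gamma$.

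The main obstacle is relative denseness, the difficulty being that the window $W$ is narrow: its width $w$ need not exceed the diameter of the $\p_1$-projection of the fundamental domain $D$ of Lemma~\ref{fd}, so one cannot simply round an arbitrary target into the slab $[0,w)\times K_\sigma$. I would resolve this by the standard model-set mechanism, using that $\Phi(V\cdot\Z[\alpha^{-1}])$ is cocompact in $K_\alpha$ (Lemmas~\ref{delalpha} and~\ref{fd}) and that $\pi_{\p_1}(\Phi(V\cdot\Z[\alpha^{-1}]))=V\cdot\Z[\alpha^{-1}]$ is dense in $\R$ by Kronecker's theorem. Concretely, fix $h_0\in\mathrm{int}(W)$ and let $\overline D$ be the compact closure of the fundamental domain, with $C_H=\pi_{\p_1}(\overline D)$ its compact $\p_1$-projection. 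Density of $\pi_{\p_1}$ on the lattice makes $\{\,\mathrm{int}(W)-\pi_{\p_1}(\Phi(v))\,\}_{v\in V\cdot\Z[\alpha^{-1}]}$ an open cover of $\R$, from which I extract a finite subcover of $C_H$ indexed by $v_1,\dots,v_N$. Writing an arbitrary $(g,h_0)\in K_\sigma\times\R$ as $\Phi(v)+d$ with $d\in\overline D$ and correcting the $\p_1$-component by the appropriate $v_j$ then yields a lattice point whose $\p_1$-coordinate lies in $W$ and whose $K_\sigma$-coordinate is within a fixed radius $R$ (depending only on $\overline D$ and on $v_1,\dots,v_N$) of $g$. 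Hence every ball of radius $R$ in $K_\sigma$ meets $\Gamma=\Lambda(W)$, which is relative denseness; this is exactly the content of the general model-set theorem (see \cite{Sin:06} and the references therein). Combining the two parts shows that $\Gamma$ is a Delone set.
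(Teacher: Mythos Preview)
Your proof is correct and follows the same conceptual route as the paper: identify $\Gamma$ as (the support of) a model set in the CPS $(K_\sigma,\R,\Phi(V\cdot\Z[\alpha^{-1}]))$ and conclude that it is Delone. The paper dispatches this in one line by citing \cite[Proposition~2.6]{MO:97} (model sets are Delone), whereas you unpack that citation and reprove it directly from Lemma~\ref{delalpha} and Lemma~\ref{fd}: uniform discreteness via the finiteness of lattice points in a compact slab, and relative denseness via the cocompactness of the lattice combined with density of $\pi_{\p_1}(\tilde L)$ in $\R$. This buys self-containment at the cost of length; the paper's version is terse but relies on an external reference.

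One small slip in your relative-denseness sketch: after writing $(g,h_0)=\Phi(v)+d$, the lattice element you need to land in $W$ is $v=h_0-\pi_{\p_1}(d)\in h_0-C_H$, so the finite subcover should be extracted over $h_0-C_H$ rather than $C_H$ itself. This is a bookkeeping issue only; the mechanism (finite correction by $v_1,\dots,v_N$ coming from density of $\pi_{\p_1}$ on the lattice) is exactly right, and you correctly flag that this is the standard model-set argument.
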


\begin{proof}
It suffices to prove that each $\Phi'(\text{Frac}(\sigma,a))$ is a model set, because by \cite[Proposition 2.6]{MO:97} model sets are Delone sets. 
But this follows from (\ref{tran}).
\end{proof}

\end{section}

\begin{section}{Relations between different approaches}\label{7}

The aim of this section is to provide connections between the different approaches seen so far. We start establishing relations between the geometric realization of a 
substitution and Dumont-Thomas numeration.

\begin{lemma}
The $\Q$-basis $\{v_1,\ldots,v_n\}$ of $\Q(\alpha)$ has the property that multiplication by $\alpha$ in $\Q(\alpha)$ is conjugate to the action of $M_\sigma$ on $\Q^n$, that is, the diagram
\[
\xymatrix{
\Q(\alpha) \ar[r]^{\cdot\alpha} \ar[d]_f & \Q(\alpha) \ar[d]^f\\
\Q^n  \ar[r]^{M_\sigma} & \Q^n  }
\]
commutes. Here $f:\Q(\alpha)\rightarrow\Q^n$
maps $z=z_1v_1+\cdots+z_nv_n$ to the vector $(z_1,\ldots,z_n)^t$.
Furthermore $V\cdot\Z[\alpha^{-1}]\cong Z$ as $\Z$-modules.
\end{lemma}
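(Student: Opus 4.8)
The plan is to verify the commutativity of the square by a direct computation with the left-eigenvector relation, and then to transport the module $V\cdot\Z[\alpha^{-1}]$ through the resulting conjugacy $f$. First I note that the lemma asserting $\operatorname{rank}V=n$ already guarantees that $v_1,\dots,v_n$ are $\Q$-linearly independent; since $[\Q(\alpha):\Q]=n$ they form a $\Q$-basis, so $f$ is a well-defined $\Q$-linear isomorphism $\Q(\alpha)\to\Q^n$ with $f(v_j)=\V{e}_j$.

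For the commutativity, I would use that $\V{v}_\alpha=(v_1,\dots,v_n)$ is a \emph{left} eigenvector, so $\V{v}_\alpha M_\sigma=\alpha\V{v}_\alpha$, that is $\sum_i v_i(M_\sigma)_{ij}=\alpha v_j$ for every $j$. Taking $z=\sum_j z_j v_j$ and expanding
\[
\alpha z=\sum_j z_j(\alpha v_j)=\sum_j z_j\sum_i (M_\sigma)_{ij}v_i=\sum_i (M_\sigma\V{z})_i\,v_i,
\]
where $\V{z}=(z_1,\dots,z_n)^t$, one reads off the coordinates to obtain $f(\alpha z)=M_\sigma\V{z}=M_\sigma f(z)$, which is exactly the commutativity of the diagram. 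The only delicate point here is the index bookkeeping that makes $M_\sigma$ appear rather than its transpose; this is precisely what the \emph{left} eigenvector relation delivers.

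For the second assertion, since multiplication by $\alpha$ is invertible on the field $\Q(\alpha)$, the commuting square forces $M_\sigma$ to be invertible on $\Q^n$ and yields $f(\alpha^{-k}z)=M_\sigma^{-k}f(z)$ for all $k\geq 0$. I would then observe that $V\cdot\Z[\alpha^{-1}]=\bigcup_{k\geq 0}\alpha^{-k}V$: the inclusion $\supseteq$ is immediate, while for $\subseteq$ one writes each $r\in\Z[\alpha^{-1}]$ as a $\Z$-polynomial in $\alpha^{-1}$ and uses $\alpha V\subseteq V$ (hence $\alpha^{-j}V\subseteq\alpha^{-k}V$ for $j\leq k$) to place any finite sum $\sum v_i r_i$ inside a single $\alpha^{-k}V$. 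Applying $f$ and using $f(V)=\langle\V{e}_1,\dots,\V{e}_n\rangle_\Z=\Z^n$ then gives
\[
f\big(V\cdot\Z[\alpha^{-1}]\big)=\bigcup_{k\geq 0}M_\sigma^{-k}f(V)=\bigcup_{k\geq 0}M_\sigma^{-k}\Z^n=Z.
\]
As $f$ is injective and $\Z$-linear, its restriction is the claimed isomorphism $V\cdot\Z[\alpha^{-1}]\cong Z$ of $\Z$-modules.

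The computations are essentially routine linear algebra and module bookkeeping, so I do not expect a genuine obstacle; the two points that require care are keeping the transpose conventions straight (ensuring $M_\sigma$, not $M_\sigma^t$, governs the square) and justifying the rewriting $V\cdot\Z[\alpha^{-1}]=\bigcup_{k}\alpha^{-k}V$ cleanly from $\alpha V\subseteq V$.
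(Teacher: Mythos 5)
Your proposal is correct and follows the same route as the paper's (much terser) proof: the commutativity is exactly the left-eigenvector computation $\sum_i v_i(M_\sigma)_{ij}=\alpha v_j$, and the isomorphism $V\cdot\Z[\alpha^{-1}]\cong Z$ is obtained by transporting $V\cong\Z^n$ through the conjugacy, with your identification $V\cdot\Z[\alpha^{-1}]=\bigcup_{k\geq 0}\alpha^{-k}V$ merely making explicit a step the paper leaves implicit.
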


\begin{proof}
The commutativity of the diagram follows because $\mathbf{v}_\alpha$ is a left eigenvector of $M_\sigma$.
The isomorphism $V\cdot\Z[\alpha^{-1}]\cong Z$ follows because $V=\langle v_1,\ldots,v_n\rangle_\Z\cong \Z^n$ and the multiplication by $\alpha$ on $V$ is conjugate to the action of $M_\sigma$ on $Z$.
\end{proof}

Regarding $\mathbb{Q}(\alpha)$ as a vector space with basis $\{v_1,\ldots,v_n\}$ is particularly convenient because, for every $\V{x}\in Z$, the scalar product $\langle\V{x},\V{v}_\alpha\rangle$ is an element of $\mathbb{Q}(\alpha)$.
We use the connection between $V\cdot\Z[\alpha^{-1}]$ and $Z$ to associate with every $\Psi(\V{x})\in Z_\text{ext}$ the embedding of $\langle\V{x},\V{v}_\alpha\rangle\in\mathbb{Q}(\alpha)$ in $K_\alpha$. Precisely we have the following statement.

\begin{lemma}\label{comm}
The following diagram is commutative:
\[ \xymatrix{
Z\ar[r]^\Psi \ar[d]_{f^{-1}} & Z_{\rm ext} \ar[d]_F \ar[r]^{\pi_\mathbf{u}\times\rm id} & \mathbb{H}_{\rm ext}
\ar[d]^G \\ V\cdot\Z[\alpha^{-1}] \ar[r]^\Phi & K_\alpha \ar[r]^{\pi} & K_\sigma } \]
where
\begin{equation}\label{F}
F=\begin{pmatrix}
\Phi_\infty(v_1)^t & \cdots & \Phi_\infty(v_n)^t &  & \mathbf{0} &  \\
 & \mathbf{0} &  &  & I &
\end{pmatrix}\text{,}
\end{equation}
$I$ denotes the identity matrix whose size equals the number of finite primes $\p$ satisfying $\p \mid(\alpha)$, and $G$ is the matrix obtained from $F$ by erasing the first row.
\end{lemma}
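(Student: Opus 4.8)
The plan is to first make the block matrices $F$ and $G$ explicit as linear maps, and then verify the two squares separately: the left one is a mere rewriting, while the right one hinges on the eigenvector geometry of $M_\sigma$ and is where the real work lies.

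Reading off the block form \eqref{F}, the map $F$ sends $(\V{x},(w_\p)_\p)\in Z_{\rm ext}$ to $\big(\sum_{i=1}^n x_i\Phi_\infty(v_i),(w_\p)_\p\big)\in K_\infty\times\prod_{\p\mid(\alpha)}K_\p=K_\alpha$; that is, on the Archimedean coordinates it is the $\R$-linear map determined by $\V{e}_i\mapsto\Phi_\infty(v_i)$, and on the non-Archimedean coordinates it is the identity. Since $G$ is obtained from $F$ by deleting the row corresponding to the distinguished infinite place $\p_1$, the map $G$ carries $(\V{h},(w_\p)_\p)\in\mathbb{H}_{\rm ext}$ to $\pi\big(\sum_i h_i\Phi_\infty(v_i),(w_\p)_\p\big)$, where $\V{h}$ is read off in the coordinates of $\mathbb{H}\subset\R^n$. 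In particular, both $F$ and $G$ transport the non-Archimedean coordinates identically, and $\pi$ leaves them untouched (it discards only the infinite place $\p_1$); hence in either square these coordinates match trivially, and it suffices to control the Archimedean part.

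For the left square, fix $\V{x}\in Z\subset\Q^n$ and put $\xi=\langle\V{x},\V{v}_\alpha\rangle=\sum_i x_iv_i\in\Q(\alpha)$, so that $f^{-1}(\V{x})=\xi$. Then $\Phi(\xi)$ has Archimedean component $\Phi_\infty(\xi)$ and $\p$-component the image of $\xi$ under $K\hookrightarrow K_\p$. On the other hand $\Psi(\V{x})=(\V{x},\xi,\dots,\xi)$, and applying $F$ produces the Archimedean component $\sum_i x_i\Phi_\infty(v_i)=\Phi_\infty(\xi)$ — here one uses that $\Phi_\infty$ is a ring homomorphism, hence $\Q$-linear, together with $x_i\in\Q$ — and the $\p$-components $\xi$. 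Thus $F\circ\Psi=\Phi\circ f^{-1}$, which is routine.

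The right square, $\pi\circ F=G\circ(\pi_{\mathbf{u}}\times\mathrm{id})$, is the main obstacle. By the first paragraph it reduces to showing that for every infinite place $\p\neq\p_1$ the $\p$-component of $\sum_i(x_i-h_i)\Phi_\infty(v_i)$ vanishes, where $\V{h}=\pi_{\mathbf{u}}(\V{x})$. Using the normalization $\langle\V{u}_\alpha,\V{v}_\alpha\rangle=1$ together with $\langle\V{h},\V{v}_\alpha\rangle=0$ one finds $\V{x}-\pi_{\mathbf{u}}(\V{x})=\langle\V{x},\V{v}_\alpha\rangle\,\V{u}_\alpha$, whence $\sum_i(x_i-h_i)\Phi_\infty(v_i)=\langle\V{x},\V{v}_\alpha\rangle\sum_i (u_\alpha)_i\Phi_\infty(v_i)$. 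Writing $\tau_\p$ for the embedding attached to $\p$, the $\p$-component of $\sum_i(u_\alpha)_i\Phi_\infty(v_i)$ equals $\sum_i\tau_\p(v_i)(u_\alpha)_i$. Applying $\tau_\p$ to $\V{v}_\alpha M_\sigma=\alpha\V{v}_\alpha$ (legitimate because $M_\sigma$ has rational entries) shows that $(\tau_\p(v_1),\dots,\tau_\p(v_n))$ is a left eigenvector of $M_\sigma$ for the conjugate eigenvalue $\tau_\p(\alpha)$. Since $\V{u}_\alpha$ is a right eigenvector for $\alpha$ and $\tau_\p(\alpha)\neq\alpha$ for $\p\neq\p_1$ (the conjugates of the Pisot number $\alpha$ are pairwise distinct), the standard orthogonality of left and right eigenvectors for distinct eigenvalues forces $\sum_i\tau_\p(v_i)(u_\alpha)_i=0$. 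Therefore $\sum_i(x_i-h_i)\Phi_\infty(v_i)$ is supported only on the $\p_1$-coordinate, which $\pi$ removes, and the right square commutes. The crux is thus the identification of the $\p$-components of the $\Phi_\infty(v_i)$ with the Galois conjugates of $\V{v}_\alpha$ and the ensuing eigenvector orthogonality.
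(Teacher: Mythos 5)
Your proof is correct and follows essentially the same route as the paper's: the left square is the same bookkeeping, and the right square in both cases rests on identifying the non-$\p_1$ rows of $F$ with the Galois conjugates $(\tau_\p(v_1),\dots,\tau_\p(v_n))$ of $\V{v}_\alpha$, which are left eigenvectors of $M_\sigma$ for the conjugate eigenvalues, and then invoking left/right eigenvector biorthogonality. The only cosmetic difference is that you subtract the projection and kill the residual $\langle\V{x},\V{v}_\alpha\rangle\V{u}_\alpha$ with the single relation $\langle\V{u}_\alpha,\V{v}_{\alpha^{(i)}}\rangle=0$, whereas the paper expands $\V{x}$ in the full dual eigenbasis and compares both sides componentwise.
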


\begin{proof}
For the left-square diagram, let $\V{x}\in Z$, then $\Psi(\V{x})=(\V{x},\langle \V{x},\V{v}_\alpha \rangle,\ldots,\langle \V{x},\V{v}_\alpha \rangle)$ and applying $F$ 
we get the vector $(\Phi_\infty(\langle \V{x},\V{v}_{\alpha}\rangle),\langle \V{x},\V{v}_\alpha \rangle,\ldots,\langle \V{x},\V{v}_\alpha \rangle)$.
Now, embedding $f^{-1}(\V{x})=\langle \V{x},\V{v}_\alpha\rangle$ by $\Phi$ we get exactly the same vector.

For the right-square diagram, let $\V{x}=(\V{x}_1,\V{x}_2)\in Z_\text{ext}$, where $\V{x}_1\in Z$ and $\V{x}_2\in\prod_{\p\mid(\alpha)}K_\p$. 
Let $\alpha^{(i)}$, $i=2,\ldots,n$, be the Galois conjugates of $\alpha=\alpha^{(1)}$. 
Let $\mathbf{v}_{\alpha^{(i)}}$ and $\mathbf{u}_{\alpha^{(i)}}$ be respectively the left and right eigenvectors for $\alpha^{(i)}$. 
Taking $\mathbf{u}_{\alpha}$ such that $\langle\mathbf{u}_{\alpha},\mathbf{v}_{\alpha}\rangle=1$, it follows easily that $\{\mathbf{u}_{\alpha^{(i)}}\}_{i=1}^n$, 
$\{\mathbf{v}_{\alpha^{(i)}}\}_{i=1}^n$ are dual bases. Then every $\mathbf{x}_1\in Z\subset\R^n$ admits the decomposition (cf. \cite[Section 2.1]{CS:01})
\[ \mathbf{x}_1 = \langle\mathbf{x}_1,\mathbf{v}_\alpha\rangle \mathbf{u}_\alpha + \sum_{i=2}^n \langle\mathbf{x}_1,\mathbf{v}_{\alpha^{(i)}}\rangle \mathbf{u}_{\alpha^{(i)}}\text{.} \]
Therefore
\[ 
(\pi_\mathbf{u}\times\text{id})(\mathbf{x})=(\langle \mathbf{x}_1,\mathbf{v}_{\alpha^{(2)}} \rangle \mathbf{u}_{\alpha^{(2)}}+\ldots+
\langle \mathbf{x}_1,\mathbf{v}_{\alpha^{(n)}}\rangle \mathbf{u}_{\alpha^{(n)}},\V{x}_2) \text{.}
\]
Applying $G$ to this vector we get
\[ G(\pi_\mathbf{u}\times\text{id})(\mathbf{x})=(\langle \mathbf{x}_1,\mathbf{v}_{\alpha^{(2)}} \rangle,\ldots,
\langle \mathbf{x}_1,\mathbf{v}_{\alpha^{(n)}}\rangle,\V{x}_2)\text{,}  \]
since $\{\mathbf{u}_{\alpha^{(i)}}\}_{i=1}^n$ and $\{\mathbf{v}_{\alpha^{(i)}}\}_{i=1}^n$ are dual bases.
We obtain the same result projecting $F\V{x}=(\Phi_\infty(\langle \V{x}_1,\V{v}_{\alpha}\rangle),\V{x}_2)$ by $\pi$.
\end{proof}

Notice that $F$ is invertible because $\{v_1,\ldots,v_n\}$ are rationally independent.
The following result surveys and clarifies the connection established between the Dumont-Thomas numeration and geometric representations of substitutions.

\begin{theorem}\label{diagramtheorem}
Each square in the diagram
\[
\xymatrix@!0{
& & Z \ar[ddll]_{\Psi}\ar[rrrrr]^{M_\sigma} \ar@{.>}'[dd][dddd]        
& & & & & Z \ar[ddll]_{\Psi} \ar@{.>}[dddd]^{f^{-1}}
\\ & & & & & & & \\
Z_{\text{\rm ext}} \ar[rrrrr]^{\qquad M_\sigma}\ar@{.>}[dddd]_{F}
& & & & & Z_{\text{\rm ext}}\ar@{.>}[dddd] & & \\
&&&&&&&\\
& & V\cdot\Z[\alpha^{-1}]  \ar'[rrr]^(.7){\cdot\alpha}[rrrrr]\ar[ddll]^\Phi
& & & & & V\cdot\Z[\alpha^{-1}] \ar[ddll]^\Phi
\\ & & & & & & & \\
K_\alpha \ar[rrrrr]^{\cdot\alpha}
& & & & & K_\alpha & &
}
\]
is commutative.
\end{theorem}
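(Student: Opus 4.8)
The diagram is a cube and the claim is that each of its six square faces commutes; the strategy is to dispatch them in three groups according to which earlier result trivializes them, leaving only one genuine computation.

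First, the left and the right face of the cube are literally the left-hand square of Lemma~\ref{comm}: each is the square spanned by $\Psi$, $F$, $f^{-1}$ and $\Phi$, and both vertical pairs of the cube carry the same labels, so both faces commute by the already established identity $F\circ\Psi=\Phi\circ f^{-1}$. The back face, spanned by the two copies of $M_\sigma$ on $Z$, the two copies of $f^{-1}$, and multiplication by $\alpha$ on $V\cdot\Z[\alpha^{-1}]$, is the content of the preceding lemma: there the coordinate map $f$ is shown to conjugate multiplication by $\alpha$ on $\Q(\alpha)$ to $M_\sigma$ on $\Q^n$, i.e. $f^{-1}(M_\sigma\V{x})=\alpha\,f^{-1}(\V{x})$, and one only has to note that this conjugacy restricts to the sub-$\Z$-modules $Z$ and $V\cdot\Z[\alpha^{-1}]$.

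The remaining three faces I would settle directly, all of them resting on the single relation
\[ \langle M_\sigma\V{x},\V{v}_\alpha\rangle = \alpha\,\langle\V{x},\V{v}_\alpha\rangle, \]
which holds because $\V{v}_\alpha M_\sigma=\alpha\V{v}_\alpha$. For the top face recall that the extension of $M_\sigma$ to $Z_{\rm ext}=Z\times\prod_{\p\mid(\alpha)}K_\p$ acts as the original $M_\sigma$ on the $Z$-component and as multiplication by $\alpha$ on each non-Archimedean factor; hence applying $M_\sigma$ and then $\Psi$ gives $(M_\sigma\V{x},\langle M_\sigma\V{x},\V{v}_\alpha\rangle,\ldots)$, which by the displayed relation is exactly the result of applying $\Psi$ and then the extended $M_\sigma$. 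The bottom face is immediate, since $\Phi$ is a ring homomorphism and $\alpha\in K$, so that $\Phi(\alpha\xi)=\alpha\,\Phi(\xi)$ under the multiplicative action of $K$ on $K_\alpha$.

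The front face is the only step needing a little bookkeeping, and I expect it to be the main (if modest) obstacle. Writing $F(\V{x}_1,\V{x}_2)=(\Phi_\infty(\langle\V{x}_1,\V{v}_\alpha\rangle),\V{x}_2)$ as in the proof of Lemma~\ref{comm}, one applies the extended $M_\sigma$ first: the Archimedean block becomes $\Phi_\infty(\langle M_\sigma\V{x}_1,\V{v}_\alpha\rangle)=\Phi_\infty(\alpha\langle\V{x}_1,\V{v}_\alpha\rangle)=\alpha\,\Phi_\infty(\langle\V{x}_1,\V{v}_\alpha\rangle)$, using that $\Phi_\infty$ is a ring homomorphism, while the non-Archimedean block is multiplied by $\alpha$ by the definition of the extension; comparing with $\alpha\cdot F(\V{x}_1,\V{x}_2)$ yields $F\circ M_\sigma=(\cdot\alpha)\circ F$. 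The only care required is to treat the two blocks of $F$ separately and to track the two routes by which $\alpha$ enters, through the eigenvalue relation on the real part and directly on the $\p$-adic part. As a consistency check one may note that, $f^{-1}$ and $F$ being isomorphisms, the front face also follows from the other five faces by a chase around the cube on the image $\Psi(Z)$, but the direct computation is shorter and valid on all of $Z_{\rm ext}$.
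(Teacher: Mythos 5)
Your proof is correct. The paper states this theorem without any proof, treating it as an immediate consequence of the two preceding lemmas; your face-by-face verification supplies exactly the omitted details --- the left and right faces from the first square of Lemma~\ref{comm}, the back face from the conjugacy $f(\alpha z)=M_\sigma f(z)$ of the lemma before it (restricted to $Z$ and $V\cdot\Z[\alpha^{-1}]$), and the short computations with $\mathbf{v}_\alpha M_\sigma=\alpha\mathbf{v}_\alpha$ for the top, front and bottom faces --- and each step checks out against the paper's definitions of $\Psi$, $F$ and the extension of $M_\sigma$ to $Z_{\rm ext}$.
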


Theorem \ref{diagramtheorem} provides the connection between the two stepped surfaces $\mathcal{G}_\text{ext}$ in (\ref{step}) and $\mathcal{S}$ in (\ref{stepped}). 
Indeed, considering a point $\Psi(\V{x})\in Z_\text{ext}$ is equivalent to taking a point $\Phi(\langle\V{x},\V{v}_\alpha\rangle)\in \Phi(V\cdot\Z[\alpha^{-1}])$, 
up to conjugation. Furthermore, the condition $\V{x}\in \mathbb{H}^\geq$, $\V{x}-\V{e}_a\in \mathbb{H}^<$ can be translated in the $\mathbb{Q}(\alpha)$-world to
$0\leq\langle \V{x},\V{v}_\alpha\rangle<\langle \V{e}_a,\V{v}_\alpha\rangle=\delta(a)$.

The advantage of dealing in the number field world, \emph{i.e.}, with $K_\alpha$ instead of the space $Z_{\text{ext}}$, is that the stepped surface and the translation 
set are parametrized by one coordinate $x\in V\cdot\Z[\alpha^{-1}]\cap[0,\delta(a))$, for some $a\in\mathcal{A}$.

By Theorem \ref{diagramtheorem}, $\T$ and the dual geometrical substitution $E_1^*$ are conjugate.

\begin{lemma}\label{conj}
The following diagram is commutative:
\[
\xymatrix{  \Psi(Z)\times\mathcal{A}\; \ar[r]^{E_1^*}\ar[d]_F & \; 2^{\Psi(Z)\times\mathcal{A}} \ar[d]^F  \\
K_\alpha\times\mathcal{A}\; \ar[r]^\T &  \; 2^{K_\alpha\times\mathcal{A}}  }
\]
with $F$ defined in Equation (\ref{F}) and where by convention $F[\mathbf{x},a]^*$ equals $(F(\mathbf{x}),a)$.
\end{lemma}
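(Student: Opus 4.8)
The plan is to verify the identity $F\circ E_1^* = \T\circ F$ directly on a single basis element $[\Psi(\mathbf{x}),a]^*$ (with $\mathbf{x}\in Z$), since both composites are determined by their values on such elements and both are unions indexed by exactly the same set of prefix-automaton edges $b\xrightarrow{p}a$. It therefore suffices to match, edge by edge, the $b$-component produced along each route. First I would apply the extended $E_1^*$, obtaining $\bigcup_{b\xrightarrow{p}a}\{[M_\sigma^{-1}(\Psi(\mathbf{x})+\Psi(\V{P}(p))),b]^*\}$; here I use that $\Psi$ is linear and intertwines the extended $M_\sigma$, so that $M_\sigma^{-1}(\Psi(\mathbf{x})+\Psi(\V{P}(p)))=\Psi(M_\sigma^{-1}(\mathbf{x}+\V{P}(p)))$ again lies in $\Psi(Z)$, consistent with the stated domain and codomain of $E_1^*$.

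Next I would push this through $F$ and invoke Theorem~\ref{diagramtheorem}, whose bottom square gives the conjugacy $F\circ M_\sigma=\alpha\cdot F$ on $Z_{\rm ext}$ and hence, since $M_\sigma$ is invertible there, $F\circ M_\sigma^{-1}=\alpha^{-1}\cdot F$ (this conjugacy rests on the left-eigenvector relation $M_\sigma^t\V{v}_\alpha=\alpha\V{v}_\alpha$). Combined with linearity of $F$ this yields
\[
F\big(M_\sigma^{-1}(\Psi(\mathbf{x})+\Psi(\V{P}(p)))\big)=\alpha^{-1}\big(F\Psi(\mathbf{x})+F\Psi(\V{P}(p))\big).
\]
The left square of Lemma~\ref{comm} identifies $F\circ\Psi$ with $\Phi\circ f^{-1}$, i.e.\ $F\Psi(\mathbf{w})=\Phi(\langle\mathbf{w},\V{v}_\alpha\rangle)$; in particular $F\Psi(\mathbf{x})=F(\Psi(\mathbf{x}))$ is exactly the input appearing on the right branch, while $F\Psi(\V{P}(p))=\Phi(\langle\V{P}(p),\V{v}_\alpha\rangle)=\Phi(\delta(p))$ by the definition $\delta(p)=\langle\V{P}(p),\V{v}_\alpha\rangle$. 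Substituting gives $\alpha^{-1}\big(F(\Psi(\mathbf{x}))+\Phi(\delta(p))\big)$ for the $b$-component.

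Finally I would compare with the other route: by the definition of $\T$ in \eqref{Text}, $\T(F(\Psi(\mathbf{x})),a)=\bigcup_{b\xrightarrow{p}a}\{(\alpha^{-1}(F(\Psi(\mathbf{x}))+\Phi(\delta(p))),b)\}$, whose $b$-component is precisely the expression just obtained. Since the two unions range over the same edge set, they coincide and the diagram commutes.

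I expect no genuine obstacle here—the statement is essentially a repackaging of Theorem~\ref{diagramtheorem} and Lemma~\ref{comm}. The one point requiring care is the bookkeeping across the change of spaces: the translation by $\V{P}(p)$ (living in $Z_{\rm ext}$ and transported by the extended $M_\sigma^{-1}$) must be matched with the translation by $\Phi(\delta(p))$ (living in $K_\alpha$ and scaled by $\alpha^{-1}$), and the bridge is exactly $F\circ\Psi=\Phi\circ\langle\,\cdot\,,\V{v}_\alpha\rangle$ together with $\delta(p)=\langle\V{P}(p),\V{v}_\alpha\rangle$.
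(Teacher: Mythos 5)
Your argument is correct and matches what the paper does: the paper gives no separate proof of Lemma~\ref{conj}, deriving it as an immediate consequence of Theorem~\ref{diagramtheorem} and Lemma~\ref{comm}, and your edge-by-edge computation simply makes that derivation explicit using exactly the same ingredients ($F\circ M_\sigma=\alpha\cdot F$, $F\circ\Psi=\Phi\circ f^{-1}$, and $\delta(p)=\langle\V{P}(p),\V{v}_\alpha\rangle$). No gaps.
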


Next we provide the relations between the Dumont-Thomas numeration and the model set approach, and we connect the latter with the maps $E_1$ and $E_1^*$.

We can refine the sets $\Z_{\sigma,a}^{(k)}$ defined in Section~\ref{fracint} by taking only those finite integers associated with walks in the prefix automaton starting 
at a state $b$ and ending at state $a$. Call these sets $\Z_{b,a}^{(k)}$. By definition $\Z_{\sigma,a}^{(k)}=\bigcup_{b\in\mathcal{A}}\Z_{b,a}^{(k)}$. 
Moreover, if the word $\sigma(b)$ starts with $b$ we easily see that the sequence $(\Z_{b,a}^{(k)})_{k\ge 0}$ is nested.

\begin{lemma}\label{u0}
Let $u=u_0u_1\cdots$ be the fixed point of $\sigma$ and let $\underline{\varLambda}$ be as in (\ref{lambda}).
Then we have $(\Z_{u_0,a})_{a\in\mathcal{A}}=\underline{\varLambda}$. 
Furthermore $(\Z_{\sigma,a})_{a\in\mathcal{A}}=\mathop{\rm Lim}_{k\to\infty}\boldsymbol{\varTheta}^k(\{0\})_{a\in\mathcal{A}}$, 
and in particular $(\Z_{\sigma,a})_{a\in\mathcal{A}}=\boldsymbol{\varTheta}(\Z_{\sigma,a})_{a\in\mathcal{A}}$.
\end{lemma}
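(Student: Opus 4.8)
The plan is to first make the action of $\boldsymbol{\varTheta}$ on a vector $\underline{X}=(X_b)_{b\in\mathcal{A}}$ of subsets of $\R$ explicit. Unwinding the definition of $\varTheta_{ab}=\bigcup_{b\xrightarrow{p}a}\{t_{\delta(p)}\circ f_0\}$ gives $(\boldsymbol{\varTheta}(\underline{X}))_a=\bigcup_{b\xrightarrow{p}a}(\alpha X_b+\delta(p))$, the union running over all edges of the prefix automaton ending at $a$. This is exactly the recursion satisfied by the approximation sets, namely $\Z_{\sigma,a}^{(k+1)}=\bigcup_{b\xrightarrow{p}a}(\alpha\Z_{\sigma,b}^{(k)}+\delta(p))$ and likewise $\Z_{u_0,a}^{(k+1)}=\bigcup_{b\xrightarrow{p}a}(\alpha\Z_{u_0,b}^{(k)}+\delta(p))$: multiplication by $\alpha$ shifts every digit exponent up by one, while adding $\delta(p)$ for the edge $b\xrightarrow{p}a$ appends the final digit of a length-$(k+1)$ walk. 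Hence I would prove by induction on $k$ that $(\boldsymbol{\varTheta}^k(\underline{e}_{u_0}))_a=\Z_{u_0,a}^{(k)}$, where $\underline{e}_{u_0}$ denotes the vector carrying $\{0\}$ in position $u_0$ and $\emptyset$ elsewhere, and that $(\boldsymbol{\varTheta}^k((\{0\})_{a\in\mathcal{A}}))_a=\Z_{\sigma,a}^{(k)}$; the base case $k=0$ records the empty walk at $a$, which contributes the empty sum $0$.

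For the first assertion I would then invoke that $\sigma(u_0)$ starts with $u_0$, since $u$ is a fixed point, so the prefix automaton carries a loop $u_0\xrightarrow{\epsilon}u_0$. Prepending this loop left-pads any length-$k$ walk starting at $u_0$ by the digit $\delta(\epsilon)=0$ without changing the associated real number; this is precisely the remark preceding the lemma that shows $(\Z_{u_0,a}^{(k)})_{k\geq0}$ to be nested and increasing. Combined with the local stabilization of these sets (on every bounded interval the intersection is eventually constant), the topological limit of a nested increasing sequence coincides with its union, so $\Z_{u_0,a}=\bigcup_{k\geq0}\Z_{u_0,a}^{(k)}$. Since $\varLambda_a=\bigcup_{k\geq0}(\boldsymbol{\varTheta}^k(\underline{e}_{u_0}))_a$ by \eqref{lambda}, the induction yields $\varLambda_a=\Z_{u_0,a}$, that is $(\Z_{u_0,a})_{a\in\mathcal{A}}=\underline{\varLambda}$.

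The second assertion is then immediate from the induction together with the definition $\Z_{\sigma,a}=\mathop{\rm Lim}_{k\to\infty}\Z_{\sigma,a}^{(k)}$, giving $(\Z_{\sigma,a})_{a\in\mathcal{A}}=\mathop{\rm Lim}_{k\to\infty}\boldsymbol{\varTheta}^k((\{0\})_{a\in\mathcal{A}})$. For the fixed-point statement I would pass the recursion $\boldsymbol{\varTheta}((\Z_{\sigma,b}^{(k)})_b)=(\Z_{\sigma,a}^{(k+1)})_a$ to the limit. Each affine map $x\mapsto\alpha x+\delta(p)$ is a homeomorphism of $\R$ and therefore commutes with the Kuratowski limit, and a finite union commutes with the Kuratowski limit as soon as the limits of the individual sequences exist (which they do here); since passing from index $k$ to $k+1$ leaves the limit unchanged, this produces $\boldsymbol{\varTheta}((\Z_{\sigma,b})_b)=(\Z_{\sigma,a})_a$.

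The genuinely delicate points are the two interchanges with the topological limit: identifying $\mathop{\rm Lim}$ with the union in the nested case, which relies on the local finiteness of the $\sigma$-integers already established after their definition, and justifying that $\mathop{\rm Lim}$ distributes over the finite union and over the affine maps in the last step. Everything else reduces to a bookkeeping induction on the walk length, so I expect the main obstacle to be phrasing these limit-interchange arguments cleanly rather than any substantive difficulty.
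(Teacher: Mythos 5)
Your proposal is correct and follows essentially the same route as the paper: identify $(\boldsymbol{\varTheta}^k(\underline{e}_{u_0}))_a$ with $\Z_{u_0,a}^{(k)}$ and $(\boldsymbol{\varTheta}^k((\{0\})_{a}))_a$ with $\Z_{\sigma,a}^{(k)}$ via the walk-sum description, then pass to the (topological) limit using the nestedness coming from $\sigma(u_0)$ beginning with $u_0$ and the local finiteness of the approximation sets. The paper's proof is just a terser version of this, and your explicit treatment of the limit interchanges (nested union versus $\mathop{\rm Lim}$, and $\mathop{\rm Lim}$ commuting with finite unions and the affine maps) fills in steps the paper leaves implicit.
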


\begin{proof}
We have $\boldsymbol{\varTheta}^k(\emptyset,\dots,\emptyset,\{0\},\emptyset,\dots,\emptyset)^t=
(\boldsymbol{\varTheta}^k_{a,u_0}(0))_{a\in\mathcal{A}}$, whose elements are of the form $\alpha^k\delta(p_k)+\cdots+\delta(p_0)$ where 
$u_0\stackrel{p_k}{\rightarrow}\cdots\stackrel{p_0}{\rightarrow}a$. But these are elements of $\Z_{u_0,a}^{(k)}$. 
Recalling that $\Z_{\sigma,a}^{(k)}=\bigcup_{u_0\in\mathcal{A}}\Z_{u_0,a}^{(k)}$, we get the second statement.
\end{proof}

\begin{lemma}\label{another}
Let $h: \Psi(Z)\times\mathcal{A}  \to \R^n$, $[\Psi(\mathbf{y}),a]\mapsto(\emptyset,\ldots,\emptyset,\{\langle -\mathbf{y},\mathbf{v}_\alpha\rangle\},\emptyset,\ldots,\emptyset)^t$,
$v: X \to \R^n$, $(x,a)\mapsto (\emptyset,\ldots,\emptyset,\{x\},\emptyset,\ldots,\emptyset)^t$, with $x$ at position $a$, and let 
$C=v\circ G\circ (\pi_\textbf{u}\times \text{id})$, where $G$ is as in Lemma \ref{comm} and $\pi_\mathbf{u}\times \text{id}$ as in Definition \ref{subtiles2}. 
Then the diagrams
\[  \xymatrix{
\Psi(Z)\times\mathcal{A} \ar[r]^{E_1} \ar[d]_h  & 2^{\Psi(Z)\times\mathcal{A}} \ar[d]^h \\
\R^n \ar[r]^{\boldsymbol{\varTheta}}  &  \R^n  }\qquad
\xymatrix{
X  \ar[r]^{T_\sigma^{-1}}\ar[d]_{v}  & 2^X \ar[d]^{v} \\
\R^n \ar[r]^{\boldsymbol{\varTheta^\#}}  &  \R^n    }\qquad
\xymatrix{
\Psi(Z)\times\mathcal{A} \ar[r]^{E_1^*} \ar[d]_{C}  & 2^{\Psi(Z)\times\mathcal{A}} \ar[d]^{C} \\
K_\sigma^n \ar[r]^{\boldsymbol{(\varTheta^\star)^\#}}  &  K_\sigma^n  }  \]
are commutative.
\end{lemma}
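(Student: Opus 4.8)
The plan is to verify each of the three squares by pushing a single basis element through both composites and comparing the resulting vectors coordinate by coordinate. Three elementary facts drive everything. Since $\V{v}_\alpha$ is a left eigenvector of $M_\sigma$ we have $\langle M_\sigma\V{z},\V{v}_\alpha\rangle=\alpha\langle\V{z},\V{v}_\alpha\rangle$ and, because $M_\sigma$ is invertible on $Z$, also $\langle M_\sigma^{-1}\V{z},\V{v}_\alpha\rangle=\alpha^{-1}\langle\V{z},\V{v}_\alpha\rangle$. By definition of $\delta$ we have $\delta(p)=\langle\V{P}(p),\V{v}_\alpha\rangle$. Finally, $\Psi$ is linear and commutes with $M_\sigma$ on $Z_\text{ext}$, so $E_1$ and $E_1^*$ act at the ext-level through the same formulas with $\V{x}$ replaced by $\Psi(\V{x})$. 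I write $(\,\cdot\,)_a$ for the $a$-th coordinate of a vector and recall that a matrix $\boldsymbol{N}=(N_{ab})$ of set-valued maps acts by $(\boldsymbol{N}\underline{\xi})_a=\bigcup_b N_{ab}(\xi_b)$.

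For the first square I would compute both routes from $[\Psi(\V{y}),b]$. Applying $E_1$ and then $h$ gives, in coordinate $a$, the set $\bigcup_{b\xrightarrow{p}a}\{\langle-(M_\sigma\V{y}-\V{P}(p)),\V{v}_\alpha\rangle\}=\bigcup_{b\xrightarrow{p}a}\{\alpha\langle-\V{y},\V{v}_\alpha\rangle+\delta(p)\}$ by the first two facts. Going the other way, $h[\Psi(\V{y}),b]$ carries $\{\langle-\V{y},\V{v}_\alpha\rangle\}$ in coordinate $b$, and since $\varTheta_{ab}=\bigcup_{b\xrightarrow{p}a}\{t_{\delta(p)}\circ f_0\}$ with $f_0(x)=\alpha x$, coordinate $a$ of $\boldsymbol{\varTheta}(h[\Psi(\V{y}),b])$ is exactly $\bigcup_{b\xrightarrow{p}a}\{\alpha\langle-\V{y},\V{v}_\alpha\rangle+\delta(p)\}$. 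The second square is dual: from $T_\sigma^{-1}(x,a)=\bigcup_{b\xrightarrow{p}a}\{(\alpha^{-1}(x+\delta(p)),b)\}$ the map $v$ places $\alpha^{-1}(x+\delta(p))$ in coordinate $b$, while the entry $\varTheta^\#_{ba}=\bigcup_{b\xrightarrow{p}a}\{f_0^{-1}\circ t_{\delta(p)}\}$ sends the coordinate-$a$ input $\{x\}$ of $v(x,a)$ to the same $\bigcup_{b\xrightarrow{p}a}\{\alpha^{-1}(x+\delta(p))\}$ in coordinate $b$.

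The third square is where the earlier lemmas enter. First I would make $C$ explicit: by the right square of Lemma \ref{comm}, $G\circ(\pi_{\mathbf{u}}\times\text{id})=\pi\circ F$ on $Z_\text{ext}$, and by the left square $F\circ\Psi=\Phi\circ f^{-1}$ with $f^{-1}(\V{x})=\langle\V{x},\V{v}_\alpha\rangle$; hence $G\circ(\pi_{\mathbf{u}}\times\text{id})\circ\Psi=\Phi'\circ\langle\,\cdot\,,\V{v}_\alpha\rangle$, so $C[\Psi(\V{x}),a]^*$ is the vector carrying $\Phi'(\langle\V{x},\V{v}_\alpha\rangle)$ in coordinate $a$. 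Applying $E_1^*$ and then $C$ yields in coordinate $b$ the sets $\bigcup_{b\xrightarrow{p}a}\{\Phi'(\langle M_\sigma^{-1}(\V{x}+\V{P}(p)),\V{v}_\alpha\rangle)\}=\bigcup_{b\xrightarrow{p}a}\{\Phi'(\alpha^{-1}(\langle\V{x},\V{v}_\alpha\rangle+\delta(p)))\}$. On the other route, the star-extension turns $f_0$ into the contraction $z\mapsto\alpha z$ on $K_\sigma$ and $t_{\delta(p)}$ into $z\mapsto z+\Phi'(\delta(p))$, so $(\varTheta^\star)^\#_{ba}=\bigcup_{b\xrightarrow{p}a}\{z\mapsto\alpha^{-1}(z+\Phi'(\delta(p)))\}$; feeding it the coordinate-$a$ entry $\Phi'(\langle\V{x},\V{v}_\alpha\rangle)$ of $C[\Psi(\V{x}),a]^*$ produces $\bigcup_{b\xrightarrow{p}a}\{\alpha^{-1}(\Phi'(\langle\V{x},\V{v}_\alpha\rangle)+\Phi'(\delta(p)))\}$ in coordinate $b$. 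The two coincide because $\Phi'=\pi\circ\Phi$ is a ring homomorphism, so $\Phi'(\alpha^{-1}(w+\delta(p)))=\alpha^{-1}(\Phi'(w)+\Phi'(\delta(p)))$.

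I expect the only genuine obstacle to be the third square, and there the subtlety is entirely one of bookkeeping: one must first invoke Lemma \ref{comm} to replace the geometric composite $C$ by the arithmetic map $\Phi'\circ\langle\,\cdot\,,\V{v}_\alpha\rangle$, and then use the multiplicativity of $\Phi'$ to pull the contraction by $\alpha^{-1}$ in $K_\sigma$ inside the embedding. Once the eigenvector identity and the definitions of $\boldsymbol{\varTheta}$, $\boldsymbol{\varTheta}^\#$ and their star-analogues are unwound, the first two squares are immediate.
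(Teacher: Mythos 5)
Your verification is correct and follows essentially the same route as the paper: the first two squares are checked by exactly the same coordinatewise computation, using the left-eigenvector identity $\langle M_\sigma^{\pm 1}\mathbf{z},\mathbf{v}_\alpha\rangle=\alpha^{\pm 1}\langle\mathbf{z},\mathbf{v}_\alpha\rangle$ and the definitions of $\boldsymbol{\varTheta}$ and $\boldsymbol{\varTheta}^\#$. The only (cosmetic) difference is in the third square, where the paper chains the star-lift of the second diagram with the conjugacy $F\circ E_1^*=\T\circ F$ of Lemma \ref{conj} and the identity $\pi\circ F=G\circ(\pi_{\mathbf{u}}\times\mathrm{id})$, while you unwind $C$ explicitly via Lemma \ref{comm} to $\Phi'\circ\langle\,\cdot\,,\mathbf{v}_\alpha\rangle$ and verify the square by direct computation; both arguments rest on the same ingredients.
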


\begin{proof}
For the first diagram we have
\begin{align*}
\boldsymbol{\varTheta}(h([\Psi(\V{x}),a])) &= (\varTheta_{1a}(\langle -\V{x},\V{v}_\alpha\rangle),\ldots,\varTheta_{na}(\langle -\V{x},\V{v}_\alpha\rangle) )^t \\
&=\Big(\bigcup_{a\stackrel{p}{\longrightarrow}1}\{\alpha \langle -\V{x},\V{v}_\alpha\rangle+\delta(p)\},\cdots,\bigcup_{a\stackrel{p}{\longrightarrow}n}\{\alpha \langle -\V{x},\V{v}_\alpha\rangle+\delta(p)\}\Big)^t \\
&=h\Big(\sum_{a\stackrel{p}{\longrightarrow}b}[\Psi(M_\sigma\V{x}-\V{P}(p)),b]\Big)=h(E_1[\Psi(\V{x}),a])\text{.}
\end{align*}
For the second diagram 
\begin{align*}
\boldsymbol{\varTheta}^\#(v(x,a)) &= (\varTheta^\#_{1a}(x),\cdots,\varTheta^\#_{na}(x) )^t
= \Big(\bigcup_{1\stackrel{p}{\longrightarrow}a} \{\alpha^{-1}(x+\delta(p))\},\cdots,\bigcup_{n\stackrel{p}{\longrightarrow}a} \{\alpha^{-1}(x+\delta(p))\} \Big)^t \\
&= v\Big( \bigcup_{b\stackrel{p}{\longrightarrow}a} \{(\alpha^{-1}(x+\delta(p)),b)\} \Big)
= v(T_\sigma^{-1}(x,a))\text{.}
\end{align*}
By the second diagram $(\boldsymbol{\varTheta}^\star)^\#\circ (v\circ \pi)=(v\circ \pi)\circ \T$, if we denote again by $v$ the map $v$ acting on $K_\sigma\times\mathcal{A}$. Moreover we know that $\T$ and $E_1^*$ are conjugate by $F$. Then, using $\pi\circ F=G\circ (\pi_\textbf{u}\times \text{id})$, the commutativity of the third diagram follows by $C$.
\end{proof}

It has been already shown in Section \ref{sec:MS} that the two translation sets $\underline{\varUpsilon}$ and $\Gamma$ are the same.
By Lemma \ref{another} we can translate Proposition \ref{inv} to $\underline{\varUpsilon}=(\boldsymbol{\varTheta}^\star)^\#(\underline{\varUpsilon})$
({\it cf.}~\cite[Proposition 6.72]{Sin:06}).

We end this section by giving the relation between all kinds of Rauzy fractals defined so far.

\begin{theorem}
Let $\sigma$ be an irreducible Pisot substitution over the alphabet $\mathcal{A}$ and fix $a\in\mathcal{A}$. Let $\mathcal{R}_\sigma(a)$ be the associated Dumont-Thomas subtile, $\mathcal{T}_\sigma(a)$ the associated $E_1^*$-subtile and
$\varOmega_a$ the associated dual prototile. If $G$ is the matrix defined in Lemma \ref{comm}, then
\[ 
\mathcal{R}_\sigma(a)=\varOmega_a=G\;\mathcal{T}_\sigma(a)\text{.} 
\] 
\end{theorem}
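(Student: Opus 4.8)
The plan is to prove the two equalities $\mathcal{R}_\sigma(a)=\varOmega_a$ and $G\,\mathcal{T}_\sigma(a)=\mathcal{R}_\sigma(a)$ separately. The first will follow from uniqueness of the attractor of the graph directed iterated function system $\boldsymbol{\varTheta}^\star$, and the second by transporting the iterates defining $\mathcal{T}_\sigma(a)$ to $K_\alpha$ through the conjugacies established in Lemma~\ref{comm}, Lemma~\ref{conj} and Theorem~\ref{diagramtheorem}.

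First I would show that $(\mathcal{R}_\sigma(a))_{a\in\mathcal{A}}$ solves the set equation defining $\underline{\varOmega}$. By Lemma~\ref{u0} the $\sigma$-integers obey $(\Z_{\sigma,a})_{a}=\boldsymbol{\varTheta}(\Z_{\sigma,a})_{a}$, that is,
\[ \Z_{\sigma,a}=\bigcup_{b\stackrel{p}{\longrightarrow}a}\bigl(\alpha\,\Z_{\sigma,b}+\delta(p)\bigr). \]
Applying the ring homomorphism $\Phi'$ and passing to closures --- which is legitimate since each $\V{z}\mapsto\alpha\V{z}+\Phi'(\delta(p))$ is a homeomorphism of $K_\sigma$ and the union is finite --- gives
\[ \mathcal{R}_\sigma(a)=\overline{\Phi'(\Z_{\sigma,a})}=\bigcup_{b\stackrel{p}{\longrightarrow}a}\bigl(\alpha\,\mathcal{R}_\sigma(b)+\Phi'(\delta(p))\bigr). \]
This is exactly the $a$-th component of $\boldsymbol{\varTheta}^\star(\mathcal{R}_\sigma(\cdot))$. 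Since multiplication by $\alpha$ is a uniform contraction on $K_\sigma$, the system $\boldsymbol{\varTheta}^\star$ is contracting and therefore has a unique attractor by \cite{MW:88}. As the $\mathcal{R}_\sigma(a)$ are non-empty compact sets (contained in the ball \eqref{ball}), the identity above forces $(\mathcal{R}_\sigma(a))_a$ to be this attractor, whence $\mathcal{R}_\sigma(a)=\varOmega_a$.

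Next I would compare $\mathcal{T}_\sigma(a)$ with $\mathcal{R}_\sigma(a)$ by pushing the iterates through $F$. Iterating the conjugacy $F\circ E_1^*=\T\circ F$ of Lemma~\ref{conj} and the relation $F\circ M_\sigma=\alpha\cdot F$ given by the face of Theorem~\ref{diagramtheorem} that connects $M_\sigma$ on $Z_{\text{\rm ext}}$ with multiplication by $\alpha$ on $K_\alpha$ through $F$, and using $F([\V{0},a]^*)=(\V{0},a)$, one obtains for every $k$
\[ F\bigl(M_\sigma^{k}\,(E_1^*)^{k}[\V{0},a]^*\bigr)=\alpha^{k}F\bigl((E_1^*)^{k}[\V{0},a]^*\bigr)=\alpha^{k}(\T)^{k}(\V{0},a). \]
Projecting by $\pi$ and using the identity $\pi\circ F=G\circ(\pi_{\mathbf{u}}\times\mathrm{id})$ from Lemma~\ref{comm} turns this into
\[ G\bigl((\pi_{\mathbf{u}}\times\mathrm{id})(M_\sigma^{k}(E_1^*)^{k}[\V{0},a]^*)\bigr)=\pi\bigl(\alpha^{k}(\T)^{k}(\V{0},a)\bigr). \]
Because $G$ is a continuous linear map it commutes with the Hausdorff limit of the (uniformly bounded) compact sets on either side; letting $k\to\infty$ and recalling that \eqref{subnew} expresses $\mathcal{R}_\sigma(a)$ precisely as the Hausdorff limit of $\pi(\alpha^{k}(\T)^{k}(\V{0},a))$, we conclude $G\,\mathcal{T}_\sigma(a)=\mathcal{R}_\sigma(a)$. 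Together with the first step this yields $\mathcal{R}_\sigma(a)=\varOmega_a=G\,\mathcal{T}_\sigma(a)$.

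I expect the main obstacle to be the careful justification of interchanging the limit and closure operations with the algebraic maps, since the algebraic backbone is already in place. Concretely, in the first step one must verify that closure commutes with the finite-union contraction step (using that each affine map is a homeomorphism of $K_\sigma$ and that the sets involved are compact), and in the second step that the linear map $G$ commutes with the Hausdorff limit (continuity of $G$ together with the fact, guaranteed by \eqref{ball}, that all the iterates remain in a fixed compact ball). The remaining difficulty is thus one of topological bookkeeping rather than of supplying new structural input, the conjugacies $F\circ E_1^*=\T\circ F$, $F\circ M_\sigma=\alpha\cdot F$ and $\pi\circ F=G\circ(\pi_{\mathbf{u}}\times\mathrm{id})$ being furnished by the earlier lemmas.
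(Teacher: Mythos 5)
Your proposal is correct and follows essentially the same route as the paper: the first equality via Lemma~\ref{u0} and uniqueness of the attractor of $\boldsymbol{\varTheta}^\star$, the second by chaining the conjugacies of Lemma~\ref{conj}, Theorem~\ref{diagramtheorem} and Lemma~\ref{comm} through the Hausdorff limit in \eqref{subnew}. The only difference is presentational: you make explicit the topological bookkeeping (closure versus finite unions of affine homeomorphisms, continuity of $G$ against the Hausdorff limit) that the paper leaves implicit.
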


\begin{proof}
We start with proving the first identity. Recall that
$\underline{\varOmega}=(\varOmega_a)_{a\in\mathcal{A}}$ is the attractor of the graph directed iterated function system $\boldsymbol{\varTheta}^\star$, 
and the Dumont-Thomas subtiles are defined by  $\mathcal{R}_\sigma(a)=\overline{\Phi'(\Z_{\sigma,a})}$. Then by Lemma \ref{u0}
\[ (\mathcal{R}_\sigma(a))_{a\in\mathcal{A}}=
\boldsymbol{\varTheta}^\star(\overline{\Phi'(\Z_{\sigma,a})})_{a\in\mathcal{A}}=
\boldsymbol{\varTheta}^\star(\mathcal{R}_\sigma(a))_{a\in\mathcal{A}} \] and the result follows by uniqueness of the attractor of $\boldsymbol{\varTheta}^\star$.

To prove the second identity, starting from \eqref{subnew} we get
\begin{align*}
\mathcal{R}_\sigma(a)&= \mathop{\rm lim_H}_{k\to\infty} \pi(\alpha^k T_{\rm ext}^{-k}(\V{0},a)) \\
&=  \mathop{\rm lim_H}_{k\to\infty} \pi(\alpha^k T_{\rm ext}^{-k}(F(\V{0}),a)) \\
&=  \mathop{\rm lim_H}_{k\to\infty} \pi(\alpha^k F(E_1^*)^k[\V{0},a]^*) \;\;\quad\qquad\qquad\text{ (by Lemma }\ref{conj}\text{)} \\
&=  \mathop{\rm lim_H}_{k\to\infty} \pi(F M_\sigma^k(E_1^*)^k[\V{0},a]^*) \;\qquad\quad\qquad\text{ (by Theorem }\ref{diagramtheorem}\text{)} \\
&= \mathop{\rm lim_H}_{k\to\infty} G (\pi_\V{u}\times\text{id})(M_\sigma^k(E_1^*)^k[\V{0},a]^*) \qquad\text{ (by Lemma }\ref{comm}\text{)}  \\
&= G\; \mathcal{T}_\sigma(a)\text{.} \qedhere
\end{align*}
\end{proof}

\begin{remark}
Rauzy fractals can be defined alternatively as the closure of the projections onto the contracting hyperplane of vertices of the broken line associated 
to the fixed point $u=u_0u_1\cdots$ of the substitution (see \emph{e.g.} \cite{BS:05,BST:10}). It turns out that this is equivalent to taking the closure of the embedding by $\Phi'$ of
$\bigcup_{k\geq 0} \Z_{u_0,a}^{(k)}$, for each $a\in\mathcal{A}$. In fact, these Rauzy fractals coincide with the Dumont-Thomas central tiles, since they are both solutions of
the graph directed iterated function system $\boldsymbol{\varTheta}^\star$.
\end{remark}

\end{section}

\begin{section}{Basic properties of the tiles}
In all what follows we will work in the number field setting, \emph{i.e.}, with $\T$. For the sake of simplicity we will denote $\T$ acting on $K_\sigma\times\mathcal{A}$ again by $\T$, instead of writing $\pi\circ \T$.
For this reason we write the Dumont-Thomas subtiles as
\begin{equation}\label{ssub}
\mathcal{R}_\sigma(a)=\mathop{\rm lim_H}_{k\to\infty} \alpha^k\cdot T_{\rm ext}^{-k}(\V{0},a)\text{, }\quad\text{for }a\in\mathcal{A}\text{.}
\end{equation}
We show now some of their properties.

\subsection{Topological properties}

The following proposition gives information on the $\p$-adic height of the central tile.
\begin{proposition}\label{pheight}
If $\mathbf{z}=(z_\p)_{\p\in S_\alpha\setminus\{\p_1\}}\in\mathcal{R}_\sigma$, for every $\p\mid(\alpha)$ we have $z_\p\in\mathfrak{p}^{d_\mathfrak{p}}$, where $d_\mathfrak{p}=\min\{v_\mathfrak{p}(x) : x\in V\}$.
\end{proposition}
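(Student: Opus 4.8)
The plan is to reduce the statement to two facts: that every $\sigma$-integer already lies in $V$, and that the admissible $\p$-adic heights form a closed condition that is therefore preserved when one passes to the closure defining $\mathcal{R}_\sigma(a)$. First I would recall that, by construction, each $x\in\Z_{\sigma,a}$ is a finite sum $\sum_{i=0}^{k-1}\delta(p_i)\alpha^i$ belonging to some approximation $\Z_{\sigma,a}^{(k)}$. Since $\delta(p_i)\in\mathcal{D}\subset V$ and $\alpha V\subset V$, every such sum lies in $V$, so that $\Z_{\sigma,a}\subset V$ (as already noted in Section~\ref{fracint}).

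Next I would control the non-Archimedean coordinates of the embedding. Fix $\p\mid(\alpha)$ and write $x=\sum_i c_i v_i\in V$ with $c_i\in\Z$. As $c_i\in\mathcal{O}$ we have $v_\p(c_i)\geq 0$, so the ultrametric inequality gives $v_\p(x)\geq\min_i v_\p(v_i)$ for all $x\in V$. Hence the set $\{v_\p(x):x\in V\setminus\{0\}\}$ is a set of integers bounded below, its minimum $d_\p$ is attained, and $V$ embeds into $\p^{d_\p}=\{z\in K_\p:v_\p(z)\geq d_\p\}$. Since $\Phi'=\pi\circ\Phi$ is the diagonal embedding, the $\p$-component of $\Phi'(x)$ is just the image of $x$ under $K\hookrightarrow K_\p$, which therefore lies in $\p^{d_\p}$. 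Consequently $\Phi'(\Z_{\sigma,a})$ is contained in the set
\[
C=\{\mathbf{z}=(z_\p)_{\p\in S_\alpha\setminus\{\p_1\}}\in K_\sigma : z_\p\in\p^{d_\p}\text{ for all }\p\mid(\alpha)\}\text{.}
\]

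Finally I would pass to the closure. Each $\p^{d_\p}$ is a fractional ideal of $\mathcal{O}_\p$ and hence is both open and closed in $K_\p$; therefore $C$, being a finite intersection of preimages of such clopen sets under the coordinate projections $K_\sigma\to K_\p$, is closed in $K_\sigma$. Since $\mathcal{R}_\sigma(a)=\overline{\Phi'(\Z_{\sigma,a})}$ and $\Phi'(\Z_{\sigma,a})\subset C$, the closure satisfies $\mathcal{R}_\sigma(a)\subset C$ as well. Taking the union over $a\in\mathcal{A}$ gives $\mathcal{R}_\sigma=\bigcup_{a\in\mathcal{A}}\mathcal{R}_\sigma(a)\subset C$, which is exactly the claim.

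The argument is essentially elementary; the only delicate point will be the last step, namely ensuring that the height constraint survives the closure operation. Here the non-Archimedean nature of $K_\p$ is what makes the proof work: because $\p^{d_\p}$ is clopen, the constraint set $C$ is closed, and so passing to the closure cannot produce points of smaller $\p$-adic valuation.
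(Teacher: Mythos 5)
Your proof is correct, and it reaches the conclusion by a slightly different route than the paper. The paper writes an arbitrary point of $\mathcal{R}_\sigma$ directly as a convergent series $\sum_{k\geq 0}\Phi'(\delta(p_k)\alpha^k)$ and applies the ultrametric inequality to that infinite sum, using $v_\p(\alpha)>0$ and the bound $v_\p(\delta(p))\geq d_\p$ for digits; the closure operation is thus absorbed into the series representation. You instead bound the valuation only on the dense subset $\Phi'(\Z_{\sigma,a})$, deriving $v_\p(x)\geq d_\p$ from the $\Z$-basis decomposition $x=\sum_i c_iv_i$ of elements of $V$, and then observe that the constraint set $C$ is closed because each $\p^{d_\p}$ is clopen in $K_\p$, so the bound survives passage to $\mathcal{R}_\sigma(a)=\overline{\Phi'(\Z_{\sigma,a})}$. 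Your version has two small advantages: it does not presuppose that every limit point admits an infinite digit expansion (a fact the paper justifies elsewhere via the projective limit), and it ties the bound directly to the definition $d_\p=\min\{v_\p(x):x\in V\}$ rather than to the digit set $\mathcal{D}$ (the paper's intermediate claim $\min_{\delta(p)\in\mathcal{D}}v_\p(\delta(p))=d_\p$ is only obviously an inequality $\geq$, though that suffices there too). The paper's version is shorter and makes the self-affine structure of the tile visible. Both arguments are sound.
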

\begin{proof}
If $\V{z}\in\mathcal{R}_\sigma$, we can write $\V{z}=\sum_{k=0}^\infty\Phi'(\delta(p_k)\alpha^k)$.
For $\p\mid(\alpha)$, the $\p$-th component of $\V{z}$ is $z_\p=\sum_{k=0}^\infty \delta(p_k)\alpha^k\in K_\p$. We deduce that $v_\p(z_\p)\geq \min_{k\geq 0}\{v_\p(\delta(p_k)\alpha^k)\}=\min_{\delta(p)\in\mathcal{D}}\{v_\p(\delta(p))\}=d_\p$, thus $z_\p\in \p^{d_\p}$.
\end{proof}

Next we prove that the subtiles cover the representation space ({\it cf.}~\cite{ABBS:08} for the non-unit beta-expansion setting).
\begin{proposition}\label{coverprop}
Let $\sigma$ be an irreducible Pisot substitution. The subtiles $\mathcal{R}_\sigma(a)$
provide a uniformly locally finite covering of the representation space $K_\sigma$ governed by $\Gamma$, in particular
\[ K_\sigma= \bigcup_{(\gamma,a)\in\Gamma}\mathcal{R}_\sigma(a)+\gamma\text{.} \]
\end{proposition}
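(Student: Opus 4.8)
The statement has two parts: the covering identity and its uniform local finiteness. I would dispose of local finiteness first, as it follows quickly from the Delone property of $\Gamma$ established in Lemma~\ref{del}. Since every subtile is contained in the closed ball $B(\V 0,M)$ of \eqref{ball}, a translate $\mathcal{R}_\sigma(a)+\gamma$ can meet a ball $B(\V w,\rho)$ only if $\gamma\in B(\V w,\rho+M)$; as $\Gamma$ is uniformly discrete and $\mathcal{A}$ is finite, only boundedly many pairs $(\gamma,a)$ satisfy this, with the bound uniform in $\V w$, which is precisely uniform local finiteness. The essential content is therefore the covering $K_\sigma=\bigcup_{(\gamma,a)\in\Gamma}\mathcal{R}_\sigma(a)+\gamma$, and the plan is to prove it by an approximation argument.

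Fix $\V z\in K_\sigma$. Since $\Phi'(V\cdot\Z[\alpha^{-1}])$ is dense in $K_\sigma$, I would first produce \emph{positive} approximants: choosing some $c\in V\cdot\Z[\alpha^{-1}]$ with $c>0$ and recalling that multiplication by $\alpha$ contracts $K_\sigma$ while expanding the $\p_1$-coordinate, I can pick $x_n\in V\cdot\Z[\alpha^{-1}]$ with $x_n>n$ (so $x_n\to\infty$ in $\R$) and $\Phi'(x_n)\to\V z$ in $K_\sigma$. Concretely one takes $x_n=\xi_n+\alpha^{M_n}c$ with $\Phi'(\xi_n)$ close to $\V z$ and $M_n$ large, since then $\alpha^{M_n}c\to+\infty$ in $\R$ while $\norm{\alpha^{M_n}\Phi'(c)}\to 0$ simultaneously. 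Each positive $x_n$ now admits a Dumont--Thomas expansion (Proposition~\ref{DT}), splitting it as $x_n=N_n+\phi_n$ into its integer part $N_n\in\Z_{\sigma,a_n}^{(k_n)}$ (the digits at non-negative powers of $\alpha$, a finite walk of length $k_n$ ending at the junction state $a_n$) and its fractional part $\phi_n\in\text{Frac}(\sigma,a_n)$. Applying the additive embedding $\Phi'$ yields $\Phi'(x_n)=\Phi'(N_n)+\Phi'(\phi_n)$ with $(\Phi'(\phi_n),a_n)\in\Gamma$.

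Passing to a subsequence I may assume the junction letter $a_n\equiv a$ is constant. Because $x_n\to\infty$, the number of integer digits satisfies $k_n\to\infty$, so by the Hausdorff convergence $\Phi'(\Z_{\sigma,a}^{(k)})\to\mathcal{R}_\sigma(a)$ recorded in \eqref{subnew} the points $\Phi'(N_n)$ eventually lie in the fixed compact ball $B(\V 0,M+1)$ and their distance to $\mathcal{R}_\sigma(a)$ tends to $0$. Extracting a further convergent subsequence $\Phi'(N_n)\to\V r$, closedness of $\mathcal{R}_\sigma(a)$ forces $\V r\in\mathcal{R}_\sigma(a)$. Then $\Phi'(\phi_n)=\Phi'(x_n)-\Phi'(N_n)\to\V z-\V r=:\gamma$, and since $\Gamma$ is a Delone set, hence closed, the limit satisfies $(\gamma,a)\in\Gamma$. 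Thus $\V z=\V r+\gamma\in\mathcal{R}_\sigma(a)+\gamma$, which proves the covering.

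The main obstacle is the non-nestedness of the approximations $\Z_{\sigma,a}^{(k)}$ observed after Example~\ref{ex1}: an arbitrary integer part $N_n$ need not be a genuine $(\sigma,a)$-integer, so $\Phi'(N_n)$ need not lie in $\mathcal{R}_\sigma(a)$ itself. Forcing $k_n\to\infty$ is exactly what repairs this, since \eqref{subnew} guarantees that the approximations collapse onto the subtile in the Hausdorff metric; the other delicate point is knowing that $\Gamma$ (equivalently each $\Phi'(\text{Frac}(\sigma,a))$) is closed, which I would extract from the Delone property of Lemma~\ref{del}. I expect the most technical bookkeeping to be verifying that the integer/fractional split meets consistently at the junction state $a_n$, so that simultaneously $N_n\in\Z_{\sigma,a_n}^{(k_n)}$ and $\phi_n\in\text{Frac}(\sigma,a_n)$ hold for the \emph{same} letter $a_n$.
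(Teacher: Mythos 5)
Your proof is correct, but it follows a genuinely different route from the paper's. The paper argues globally: it sets $\mathcal{C}_\sigma=\bigcup_{(\gamma,a)\in\Gamma}\mathcal{R}_\sigma(a)+\gamma$, shows $\alpha\,\mathcal{C}_\sigma\subseteq\mathcal{C}_\sigma$ by the digit-shift mechanism of Lemma~\ref{le:T} (multiplication by $\alpha$ moves one digit from the fractional to the integer part, landing again in a translated subtile), deduces denseness from the relative denseness of $\Gamma$ (Lemma~\ref{del}) together with the fact that $\alpha$ contracts $K_\sigma$, and finally upgrades dense to equal via compactness of the subtiles and uniform discreteness of $\Gamma$. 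You instead run a pointwise approximation: you realize an arbitrary $\V{z}$ as a limit of $\Phi'(x_n)$ with $x_n\in V\cdot\Z[\alpha^{-1}]$ positive and tending to $+\infty$ in $\R$, split each $x_n$ by Dumont--Thomas into an integer part in $\Z_{\sigma,a_n}^{(k_n)}$ and a fractional part in $\text{Frac}(\sigma,a_n)$ anchored at the same junction state, and pass to the limit using the Hausdorff convergence \eqref{subnew} (which is exactly what neutralizes the non-nestedness of the $\Z_{\sigma,a}^{(k)}$) and the closedness of $\Phi'(\text{Frac}(\sigma,a))$ coming from uniform discreteness. Both arguments ultimately rest on Lemma~\ref{del}; the paper's is shorter and exploits the self-similar structure directly, while yours is more constructive (it exhibits the decomposition $\V{z}=\V{r}+\gamma$ explicitly), treats the uniform local finiteness claim more carefully than the paper's one-line remark, and replaces the ``dense invariant union is everything'' step by a compactness/subsequence extraction. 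All the steps you flag as delicate (the $k_n\to\infty$ forcing, the common junction letter, the closedness of $\Gamma$) do go through as you describe, so I see no gap.
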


\begin{proof}
Let $\mathcal{C}_\sigma=\bigcup_{(\gamma,a)\in\Gamma}\mathcal{R}_\sigma(a)+\gamma$. Every point of $\mathcal{C}_\sigma$ is of the form $\mathbf{z}+\Phi'(x)$, where $\mathbf{z}=\sum_{i\geq 0}\Phi'(\delta(p_i)\alpha^i)\in\mathcal{R}_\sigma(a)$, $x=\sum_{i\geq 1}\delta(p_{-i})\alpha^{-i}\in\text{Frac}(\sigma,a)$. We have $\alpha \mathcal{C}_\sigma\subseteq \mathcal{C}_\sigma$, since by Lemma~\ref{le:T} we have $T_\sigma(x,a)\in\text{Frac}(\sigma,b)$ and $\alpha \mathbf{z}+\Phi'(\delta(p_{-1}))\in\mathcal{R}_\sigma(b)$, for some $b\in\mathcal{A}$. By Lemma \ref{del}, $\mathcal{C}_\sigma$ is relatively dense in $K_\sigma$. Furthermore, as $\alpha \mathcal{C}_\sigma\subseteq \mathcal{C}_\sigma$ and $\alpha$ is a contraction, $\mathcal{C}_\sigma$ is dense in $K_\sigma$. By compactness of the subtiles and uniformly discreteness of $\Gamma$ we obtain $K_\sigma=\mathcal{C}_\sigma$.
\end{proof}

In the following theorem we state important properties of our tiles ({\it cf.}~\cite[Corollary 6.66]{Sin:06}).

\begin{theorem}\label{basictheorem}
The following assertions hold for the subtiles $\mathcal{R}_\sigma(a)$, $a \in \mathcal{A}$, of an irreducible Pisot substitution.

\begin{enumerate}[(i)]
\item  The subtiles $\mathcal{R}_\sigma(a)$ are the solution of the graph directed iterated function system
\begin{equation}\label{gifs}
\mathcal{R}_\sigma(a)=\bigcup_{(\gamma,b)\in\T(\V{0},a)} \alpha(\mathcal{R}_\sigma(b)+\gamma)=\bigcup_{b\stackrel{p}{\longrightarrow}a}
\alpha\mathcal{R}_\sigma(b)+\Phi'(\delta(p))\text{,}
\end{equation}
where the union is measure disjoint. \label{basicone}

\item Each subtile $\mathcal{R}_\sigma(a)$ is the closure of its interior.  \label{basictwo}

\item The boundary of each subtile $\mathcal{R}_\sigma(a)$ has Haar measure zero.  \label{basicthree}
\end{enumerate}
\end{theorem}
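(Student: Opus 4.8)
The plan is to prove the three assertions in order, each building on the previous one. For part \eqref{basicone}, I would start from the representation \eqref{ssub}, writing $\mathcal{R}_\sigma(a)=\mathop{\rm lim_H}_{k\to\infty}\alpha^k T_{\rm ext}^{-k}(\V{0},a)$. Applying one step of $\T$ according to \eqref{Text} and factoring out $\alpha$, I would obtain $\mathcal{R}_\sigma(a)=\bigcup_{b\stackrel{p}{\to}a}\alpha\mathcal{R}_\sigma(b)+\Phi'(\delta(p))$, exactly the set equation governed by the prefix automaton; this is essentially the identity $\mathcal{R}_\sigma(a)=\boldsymbol{\varTheta}^\star(\mathcal{R}_\sigma(a))$ already established via uniqueness of the attractor in the previous section. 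The substantive content is the \emph{measure disjointness} of the union. Here I would compute measures: by Lemma~1 (the scaling lemma $\mu(\alpha\cdot M)=\alpha^{-1}\mu(M)$) each piece $\alpha\mathcal{R}_\sigma(b)+\Phi'(\delta(p))$ has measure $\alpha^{-1}\mu(\mathcal{R}_\sigma(b))$. Summing over the edges $b\stackrel{p}{\to}a$ and using that the number of such edges equals $(M_\sigma)_{a,b}=\abs{\sigma(b)}_a$, the total measure of the right-hand side is $\alpha^{-1}\sum_b (M_\sigma)_{a,b}\,\mu(\mathcal{R}_\sigma(b))$. The vector $(\mu(\mathcal{R}_\sigma(a)))_{a\in\mathcal{A}}$ therefore satisfies $\mu(\mathcal{R}_\sigma(a))\le \alpha^{-1}\sum_b (M_\sigma)_{a,b}\mu(\mathcal{R}_\sigma(b))$ with equality if and only if the union is measure disjoint; since $\alpha^{-1}M_\sigma^t$ (or the appropriate transpose) has Perron eigenvalue $1$ with the measure vector as a candidate positive eigenvector, the subadditivity must in fact be equality, forcing measure disjointness.

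For part \eqref{basictwo}, I would argue that $\mathcal{R}_\sigma(a)=\overline{\operatorname{int}\mathcal{R}_\sigma(a)}$. First one shows $\operatorname{int}\mathcal{R}_\sigma(a)\neq\emptyset$: from the covering result in Proposition~\ref{coverprop}, $K_\sigma=\bigcup_{(\gamma,a)\in\Gamma}\mathcal{R}_\sigma(a)+\gamma$ is a union of compact sets indexed by a uniformly discrete (hence countable) set, so by Baire's category theorem at least one translate of some subtile has nonempty interior, whence some $\mathcal{R}_\sigma(a)$ does; primitivity of $\sigma$ (strong connectivity of the prefix automaton) together with the set equation \eqref{gifs} then propagates nonempty interior to every subtile. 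To upgrade this to $\mathcal{R}_\sigma(a)=\overline{\operatorname{int}\mathcal{R}_\sigma(a)}$, I would use the graph directed structure: iterating \eqref{gifs} expresses $\mathcal{R}_\sigma(a)$ as a union of arbitrarily small affine copies $\alpha^k\mathcal{R}_\sigma(b)+\Phi'(\delta(\cdot))$, so any point of $\mathcal{R}_\sigma(a)$ lies in such a copy, which contains interior points (images of the interior of $\mathcal{R}_\sigma(b)$ under the open map $z\mapsto\alpha^k z+\gamma$) arbitrarily close to it.

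For part \eqref{basicthree}, $\mu(\partial\mathcal{R}_\sigma(a))=0$, I would proceed via the measure-disjoint set equation of part \eqref{basicone}. The standard mechanism is this: the boundary $\partial\mathcal{R}_\sigma(a)$ is itself the attractor of a graph directed system (obtained by differentiating \eqref{gifs}, since $\partial$ of a measure-disjoint union of sets each equal to the closure of its interior is contained in the union of the boundaries of the pieces). This yields an inequality $\mu(\partial\mathcal{R}_\sigma(a))\le \alpha^{-1}\sum_b N_{ab}\,\mu(\partial\mathcal{R}_\sigma(b))$ for suitable nonnegative integer counts $N_{ab}$ bounded by the incidence data. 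The key point, and the \textbf{main obstacle}, is to show the relevant spectral radius is strictly less than $1$ so that the only nonnegative solution is the zero vector; this is where one must verify that the boundary-governing matrix is strictly dominated by $\alpha^{-1}M_\sigma$, using that the pieces in the interior genuinely overlap only on boundaries (a consequence of measure disjointness from part \eqref{basicone}) and that the covering in Proposition~\ref{coverprop} is uniformly locally finite. Concretely I expect to bound $\sum_a\mu(\partial\mathcal{R}_\sigma(a))$ by comparing the $k$-fold refinement from \eqref{gifs} against the covering multiplicity: the measure-disjointness forces the total overlap mass to vanish in the limit, and uniform local finiteness of $\Gamma$ controls the number of neighbouring tiles, yielding $\mu(\partial\mathcal{R}_\sigma(a))=0$. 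Care is needed in the non-Archimedean factors, where $\mathfrak{p}$-adic balls are clopen; Proposition~\ref{pheight} confines the $\mathfrak{p}$-adic coordinates to $\mathfrak{p}^{d_\mathfrak{p}}$, and the relevant boundaries there behave well under the Haar measure $\mu_\mathfrak{p}$, so the argument transfers coordinatewise across $S_\alpha\setminus\{\p_1\}$.
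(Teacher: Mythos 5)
Your arguments for parts (\ref{basicone}) and (\ref{basictwo}) are essentially the paper's own: the set equation from \eqref{ssub} and \eqref{trick}, the measure count $\mu(\mathcal{R}_\sigma(a))\le\alpha^{-1}\sum_b(M_\sigma)_{ab}\mu(\mathcal{R}_\sigma(b))$ upgraded to equality by Perron--Frobenius, then Baire plus primitivity plus the $k$-fold iteration \eqref{kfold} for the closure-of-interior property. These are fine.

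Part (\ref{basicthree}) is where you diverge, and your route has a genuine gap. You propose to realize $\partial\mathcal{R}_\sigma(a)$ as the attractor of a boundary graph-directed system and to conclude from a strict spectral inequality that the only nonnegative solution of $\mu(\partial\mathcal{R}_\sigma(a))\le\alpha^{-1}\sum_b N_{ab}\,\mu(\partial\mathcal{R}_\sigma(b))$ is zero. But such an inequality alone proves nothing: the measure vector of the full tiles satisfies the same kind of relation with equality, so unless you actually establish that the spectral radius of $(N_{ab})$ is strictly smaller than $\alpha$, no conclusion follows. You flag this as the main obstacle and then only gesture at it (``the measure-disjointness forces the total overlap mass to vanish in the limit''); the missing step is precisely how a statement about overlaps between distinct pieces of the subdivision translates into a statement about the boundary of a single subtile. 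Without a concrete mechanism connecting $\partial\mathcal{R}_\sigma(b)$ to the doubly covered set, the argument does not close.

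The paper supplies exactly that mechanism, and it is much simpler than a boundary-graph analysis. Take an open ball $B\subset{\rm int}(\mathcal{R}_\sigma(a))$ (available by part (\ref{basictwo})) and, by primitivity, choose $k$ so large that some piece $U=\alpha^k(\mathcal{R}_\sigma(b)+\gamma)$ of the $k$-fold subdivision \eqref{kfold} lies entirely inside $B$. Every point of $\partial U$ is a limit of points of $B\setminus U$, each of which is covered by one of the finitely many other (closed) pieces of \eqref{kfold}; hence $\partial U$ is contained in the set covered at least twice. Measure disjointness from part (\ref{basicone}) forces that doubly covered set to be null, so $\mu(\partial U)=0$, and since $U$ is an affine copy of $\mathcal{R}_\sigma(b)$ with nonzero scaling factor, $\mu(\partial\mathcal{R}_\sigma(b))=0$ for every $b$. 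I would replace your part (\ref{basicthree}) by this argument; alternatively, if you insist on the boundary-graph route, you must prove the strict spectral domination, which in effect requires an argument of this very kind anyway.
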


\begin{proof}
\eqref{basicone}
Equation \eqref{gifs} is a direct consequence of Lemma~\ref{u0}, but we prefer to give here an explicit proof.
By \eqref{ssub} and \eqref{trick} we obtain
\begin{align*}
\mathcal{R}_\sigma(a) &= \mathop{\rm lim_H}_{k\to\infty} \alpha^k\cdot T_{\rm ext}^{-k}(\V{0},a) =
\alpha \mathop{\rm lim_H}_{k\to\infty} \bigcup_{(\gamma,b)\in \T(\V{0},a)}\alpha^{k-1}\cdot
T_{\rm ext}^{-(k-1)}(\gamma,b) \\
&= \alpha \bigcup_{(\gamma,b)\in \T(\V{0},a)}(\mathcal{R}_\sigma(b)+\gamma) =
\bigcup_{b\stackrel{p}{\longrightarrow}a}\alpha\mathcal{R}_\sigma(b)+\Phi'(\delta(p))\text{.}
\end{align*}

Let $\V{m}=(\mu(\mathcal{R}_\sigma(a)))_{a\in \mathcal{A}}$. Applying the measure $\mu$ to equation (\ref{gifs}) gives
\begin{equation}\label{disj}
\mu(\mathcal{R}_\sigma(a)) \leq
\sum_{b\stackrel{p}{\longrightarrow} a}
\mu(\alpha\mathcal{R}_\sigma(b)+\Phi'(\delta(p)))
=\alpha^{-1}\sum_{b\stackrel{p}{\longrightarrow} a}\mu(\mathcal{R}_\sigma(b))
=\alpha^{-1}\sum_{b\in \mathcal{A}}(M_\sigma)_{ab}\mu(\mathcal{R}_\sigma(b))\text{.}
\end{equation}
So we showed that the vector $\textbf{m}$ satisfies $M_\sigma\V{m}\geq \alpha \V{m}$, and, as a direct consequence of the Perron-Frobenius Theorem, we get $M_\sigma\V{m}=\alpha\V{m}$. Thus the inequality in (\ref{disj}) is actually an equality, and thus no overlap with positive measure occurs in the union in (\ref{gifs}).

\eqref{basictwo}
Since $K_\sigma$ is locally compact, we deduce from Baire's theorem that there exists $a\in \mathcal{A}$ such that ${\rm int}(\mathcal{R}_\sigma(a))\not=\emptyset$. Therefore (\ref{gifs}) and the primitivity of $\sigma$ yield that ${\rm int}(\mathcal{R}_\sigma(a))\not=\emptyset$ holds for each $a\in \mathcal{A}$. Let now $a\in \mathcal{A}$ and consider $\eta\in \mathcal{R}_\sigma(a)$. Let $B$ be an open ball centered at $\eta$. It suffices to show that $B \cap {\rm int}(\mathcal{R}_\sigma(a))\not=\emptyset$. Using the $k$-fold iteration
\begin{equation}\label{kfold}
\mathcal{R}_\sigma(a)=\bigcup_{(\gamma,b)\in T_{\rm ext}^{-k}(\V{0},a)} \alpha^k(\mathcal{R}_\sigma(b)+\gamma)
\end{equation}
of \eqref{gifs} for $k$ large enough, we obtain that $\alpha^k(\mathcal{R}_\sigma(b)+\gamma)\subseteq B$ holds for some $(\gamma,b)\in T_{\rm ext}^{-k}(\V{0},a)$. As ${\rm int}(\alpha^k(\mathcal{R}_\sigma(b)+ \gamma))\not=\emptyset$ the ball $B$ contains inner points of $\mathcal{R}_\sigma(a)$.

\eqref{basicthree}
Let $B\subset {\rm int}(\mathcal{R}_\sigma(a))$ be an open ball and fix $b\in \mathcal{A}$. By the primitivity of $\sigma$ we may choose $k\in\mathbb{N}$ large enough such that $U:=\alpha^k(\mathcal{R}_\sigma(b) + \gamma)\subseteq B$ holds for some $(\gamma,b)\in T_{\rm ext}^{-k}(\V{0},a)$. The boundary $\partial U$ is a subset of the set that is covered at least twice by the union (\ref{kfold}). We claim that $\mu(\partial U)=0$. Indeed, if $\mu(\partial U)>0$ was true, then
\[ \mu(\mathcal{R}_\sigma(a))\leq
\sum_{(\gamma,b)\in T_{\rm ext}^{-k}(\V{0},a)}
\mu(\alpha^k(\mathcal{R}_\sigma(b)+\gamma))
-\mu(\partial U)\text{,}
\]
contradicting the measure disjointness of the union (\ref{kfold}). Thus $\mu(\partial U)=0$ and, hence, $\mu(\partial \mathcal{R}_\sigma(b))=0$. Since $b\in \mathcal{A}$ was arbitrary, we are done.
\end{proof}

\subsection{Adic transformation and domain exchange}\label{sec-adic}

Siegel \cite{Si:03} shows that, if $\sigma$ satisfies the strong coincidence condition (see Definition \ref{defscc}), the following hold:
\begin{enumerate}
\item The subtiles $\mathcal{R}_\sigma(a)$ are disjoint in measure.
\item $(X_\sigma,S)$ is isomorphic in measure to $(\mathcal{R}_\sigma,\mathcal{E})$, where $\mathcal{E}$ is the \emph{domain exchange} $\mathcal{E}(\V{z})=\V{z}+\Phi'(\delta(a))$, for $\V{z}\in\mathcal{R}_\sigma(a)$.
\end{enumerate}
We give a proof of the second result connecting it also to the \emph{adic transformation} $\hat{\Z}_\sigma\to\hat{\Z}_\sigma$, $x\mapsto x+\delta(w_0)$ on $\hat{\Z}_\sigma$ ({\it cf.} \cite[Proposition 2.3]{CS:01}).

We will need the following lemma (see \cite[Lemma 4.1, Proposition 5.1, Theorem 5.1]{CS:01a}).
\begin{lemma}\label{anlem}
Let $w\in X_\sigma$ and $E_\mathcal{P}(w)=(p_i,a_i,s_i)_{i\geq 0}$ its prefix-suffix development. Then $E_\mathcal{P}(\chi(w))=(p_i,a_i,s_i)_{i\geq 1}$ and $E_\mathcal{P}(\sigma(w))=(q_i,b_i,t_i)_{i\geq 0}$ is such that $q_0=\epsilon$ and $q_{i+1}=p_i$, for every $i\geq 0$. If $Sw$ is a periodic point of $\sigma$ then $E_\mathcal{P}(Sw)=(\epsilon,b_i,t_i)_{i\geq 0}$ and $E_\mathcal{P}(w)=(p_i,a_i,\epsilon)_{i\geq 0}$, with $(p_i)_{i\geq 0}$ periodic. If $Sw$ is not periodic for $\sigma$ then $E_\mathcal{P}(Sw)=(q_i,b_i,t_i)_{i\geq 0}$ is such that there exists an integer $k_0$ with $\sigma^k(p_k)\cdots\sigma^0(p_0)a_0=\sigma^k(q_k)\cdots\sigma^0(q_0)$, for all $k\geq k_0$. 
\end{lemma}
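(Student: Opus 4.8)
The plan is to read the four assertions directly off the definitions of the desubstitution map $\chi$, the partition map $\rho$, and the prefix--suffix development $E_\mathcal{P}(w)=(\rho(\chi^i(w)))_{i\ge 0}$, using the two structural identities $\chi^{i+1}=\chi^i\circ\chi$ and $\chi\circ\sigma=\mathrm{id}$. The first two assertions are purely formal. For the first I would compute $\rho(\chi^i(\chi(w)))=\rho(\chi^{i+1}(w))=(p_{i+1},a_{i+1},s_{i+1})$, so that $E_\mathcal{P}(\chi(w))$ is the reindexed sequence $(p_i,a_i,s_i)_{i\ge 1}$. For the second, writing $\sigma(w)=S^0\sigma(w)$ exhibits the canonical decomposition of $\sigma(w)$ with empty prefix; hence $\chi(\sigma(w))=w$ and $\rho(\sigma(w))=(\epsilon,b_0,t_0)$, giving $q_0=\epsilon$, while iterating $\chi\circ\sigma=\mathrm{id}$ yields $\chi^{i+1}(\sigma(w))=\chi^i(w)$ and therefore $q_{i+1}=p_i$ (together with $b_{i+1}=a_i$ and $t_{i+1}=s_i$) for all $i\ge 0$.

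For the last two assertions I would pass to the combinatorial expansion and write the negative part $w^-=\cdots w_{-2}w_{-1}$ of $w$ as the stabilizing left-limit $\lim_{k\to\infty}\sigma^k(p_k)\cdots\sigma(p_1)p_0$. The governing observation is that passing from $w$ to $Sw$ moves the origin one step to the right, so the negative part of $Sw$ is obtained from that of $w$ by appending the central letter $w_0=a_0$. Reading this same negative part through the expansion of $Sw$ gives $\lim_k\sigma^k(q_k)\cdots q_0=\big(\lim_k\sigma^k(p_k)\cdots p_0\big)a_0$, which after stabilization becomes the claimed identity $\sigma^k(p_k)\cdots p_0\,a_0=\sigma^k(q_k)\cdots q_0$.

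The dichotomy is decided by whether some suffix $s_i$ is nonempty. If $s_{k_0-1}\neq\epsilon$ for some $k_0$, then the origin of $w$ is not at the right boundary of its level-$k$ block for $k\ge k_0$, so $w$ and $Sw$ share the same level-$k$ block and $S$ crosses no level-$k$ boundary; the high-level ancestry $(p_j,a_j,s_j)_{j\ge k}$ then coincides with $(q_j,b_j,t_j)_{j\ge k}$, and comparing the finite prefixes preceding the two origins inside the common block yields the exact equality for all $k\ge k_0$. Otherwise every suffix vanishes, $s_i=\epsilon$ for all $i$; then $S$ jumps to a fresh block at every level, all prefixes of $Sw$ vanish ($q_i=\epsilon$), and $Sw$ is a $\sigma$-periodic point. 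The $m$-periodicity of $(p_i)$ (equivalently of $(b_i,t_i)$) then follows from $\sigma^m(Sw)=Sw$, since applying $\chi\circ\sigma=\mathrm{id}$ repeatedly gives $\chi^m(Sw)=Sw$, which forces the prefix--suffix development to be $m$-periodic.

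The only real difficulty lies in this bookkeeping for the shift: one must verify that the two alternatives above are exhaustive, identify the second precisely with $\sigma$-periodicity of $Sw$, and extract the stabilization level $k_0$ from the convergence of the finite approximations $\sigma^k(p_k)\cdots p_0$ to $w^-$. The first two assertions, by contrast, are immediate from the definitions.
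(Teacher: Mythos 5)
Your proposal is correct and takes essentially the same route as the paper's own sketch: the first two claims are read off the definitions of $\chi$, $\rho$ and $E_\mathcal{P}$, and your block-boundary bookkeeping for $S$ — locate the first index with non-empty suffix, increment there, reset below, leave the higher levels untouched — is exactly the successor (adic) map $\psi$ on $X_\mathcal{P}^l$ that the paper introduces, with the same dichotomy between the case where some $s_i\neq\epsilon$ and the all-empty-suffix (periodic) case. The paper itself only sketches this argument and defers the details to Canterini--Siegel, so your level of detail is comparable.
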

\begin{proof}[Sketch of the proof]
The first two statements follow from the definition of $E_\mathcal{P}$. For the third, a successor map $\psi$ defined on $X_\mathcal{P}^l$ and conjugate to the shift 
on $X_\sigma$ is introduced, \emph{i.e.}, such that $\psi(E_\mathcal{P}(w))=E_\mathcal{P}(Sw)$, for $w\in X_\sigma$. Given $(p_i,a_i,s_i)_{i\geq 0} \in X_\mathcal{P}^l$, 
$\psi((p_i,a_i,s_i)_{i\geq 0})=(q_i,b_i,t_i)_{i\geq 0}$ is defined as follows: let $i_0$ be the first index such that $s_{i_0}\neq\epsilon$; then, 
$(q_{i_0},b_{i_0},t_{i_0})$ is such that $q_{i_0}b_{i_0}t_{i_0}=p_{i_0}a_{i_0}s_{i_0}$ and $\abs{q_{i_0}}=\abs{p_{i_0}}+1$, for $i\geq i_0$, 
$(q_i,b_i,t_i)=(p_i,a_i,s_i)$, and for $0\leq i<i_0$ we take $(\epsilon,b_i,t_i)$ such that $\sigma(b_{i+1})=b_it_i$.
This is precisely an adic transformation, and, for its particular shape, we can deduce the last claim.
\end{proof}

\begin{proposition}
Let $\sigma$ be an irreducible Pisot substitution satisfying the strong coincidence condition. Let
\[
\varphi: X_\sigma \to \hat{\Z}_\sigma\text{,}\quad w \mapsto \sum_{i\geq 0}\delta(p_i)\alpha^i
 \]
where $E_\mathcal{P}(w)=(p_i,a_i,s_i)_{i\geq 0}\in\phantom{}^\omega\mathcal{P}$ is the prefix-suffix development of $w=\cdots w_{-1}.w_0w_1\cdots$. 
Then the action of $\sigma$ on $X_\sigma$ is conjugate to the multiplication by $\alpha$ on $\hat{\Z}_\sigma$ and the following diagram
\[
\xymatrix{ X_\sigma\; \ar[r]^{\varphi} \ar[d]_S & \;\hat{\Z}_\sigma\; \ar[r]^{\Phi'} \ar[d]^{+\delta(w_0)} & \;\mathcal{R}_\sigma \ar[d]^{+\Phi'(\delta(w_0))} \\  
X_\sigma\; \ar[r]^{\varphi} & \;\hat{\Z}_\sigma\; \ar[r]^{\Phi'} & \;\mathcal{R}_\sigma }
\]
is commutative.
\end{proposition}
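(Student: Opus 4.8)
The plan is to verify the two squares of the diagram by direct computation with the prefix--suffix development, and to read off the conjugacy of $\sigma$ with multiplication by $\alpha$ from the combinatorial description of $\hat{\Z}_\sigma$ as admissible left-infinite walks in the prefix automaton. Throughout I write $E_\mathcal{P}(w)=(p_i,a_i,s_i)_{i\geq 0}$; since this lies in $X_\mathcal{P}^l$, the prefixes label a left-infinite walk ending at $a_0=w_0$, so $\varphi(w)=\sum_{i\geq 0}\delta(p_i)\alpha^i$ really is an element of $\hat{\Z}_{\sigma,w_0}\subseteq\hat{\Z}_\sigma$, and $\varphi$ is continuous because $E_\mathcal{P}$ is continuous and the passage to digit sequences is.

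First I would establish the conjugacy of $\sigma$ with $\cdot\,\alpha$. By Lemma~\ref{anlem} we have $E_\mathcal{P}(\sigma(w))=(q_i,b_i,t_i)_{i\geq 0}$ with $q_0=\epsilon$ and $q_{i+1}=p_i$. Since $\delta(\epsilon)=\langle \V{P}(\epsilon),\V{v}_\alpha\rangle=0$, this gives
\[
\varphi(\sigma(w))=\sum_{i\geq 0}\delta(q_i)\alpha^i=\sum_{i\geq 1}\delta(p_{i-1})\alpha^i=\alpha\sum_{j\geq 0}\delta(p_j)\alpha^j=\alpha\,\varphi(w),
\]
so that $\varphi\circ\sigma=(\cdot\,\alpha)\circ\varphi$. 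On digit sequences, multiplication by $\alpha$ is precisely the shift that inserts the digit $\delta(\epsilon)=0$ at index $0$, which matches the effect of $\sigma$ above. That $\varphi$ is a conjugacy follows because $E_\mathcal{P}$ is one-to-one off the orbit of the periodic points and the rational independence of $v_1,\dots,v_n$ lets one recover each prefix from its digit $\delta(p_i)$ along a fixed walk, so the passage to digit sequences is a bijection there.

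Next I would verify the left square, i.e. $\varphi(Sw)=\varphi(w)+\delta(w_0)$ with $+\delta(w_0)$ the adic transformation. Assume first $Sw$ is not periodic and set $E_\mathcal{P}(Sw)=(q_i,b_i,t_i)_{i\geq 0}$. By the last part of Lemma~\ref{anlem} there is $k_0$ with $\sigma^k(p_k)\cdots\sigma^0(p_0)a_0=\sigma^k(q_k)\cdots\sigma^0(q_0)$ for all $k\geq k_0$. Taking abelianisations and pairing with $\V{v}_\alpha$, using $\V{v}_\alpha M_\sigma=\alpha\V{v}_\alpha$ (whence $\langle\V{P}(\sigma^j(p)),\V{v}_\alpha\rangle=\alpha^j\delta(p)$) together with $\langle\V{P}(a_0),\V{v}_\alpha\rangle=\delta(a_0)=\delta(w_0)$, I obtain for every $k\geq k_0$
\[
\sum_{j=0}^k\delta(p_j)\alpha^j+\delta(w_0)=\sum_{j=0}^k\delta(q_j)\alpha^j,
\]
and letting $k\to\infty$ identifies $\varphi(Sw)$ with the result of adding $\delta(w_0)$ to $\varphi(w)$, which is the asserted commutativity. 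The right square is immediate: $\Phi'$ is the continuous extension of the additive diagonal embedding, so $\Phi'(x+\delta(w_0))=\Phi'(x)+\Phi'(\delta(w_0))$, and since $\Phi'(\hat{\Z}_{\sigma,a})=\mathcal{R}_\sigma(a)$ the right-hand vertical map is exactly the domain exchange $\mathcal{E}$.

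The main obstacle is the behaviour on the orbit of periodic points. There the other branch of Lemma~\ref{anlem} applies: $E_\mathcal{P}(Sw)$ has all prefixes empty while $E_\mathcal{P}(w)$ has all suffixes empty, so $\varphi(Sw)=0\neq\varphi(w)+\delta(w_0)$, reflecting the wrap-around of the adic successor at its maximal elements. Since these points form a countable, hence $\mu$-null, set (by minimality and unique ergodicity of $(X_\sigma,S)$), the diagram commutes off this set. Finally, to pass from a commuting diagram to a genuine measure isomorphism $(X_\sigma,S)\cong(\mathcal{R}_\sigma,\mathcal{E})$ I would invoke the strong coincidence condition of Definition~\ref{defscc}, which by Siegel's result makes the subtiles $\mathcal{R}_\sigma(a)$ pairwise disjoint in measure; this guarantees that $\mathcal{E}$ is well defined $\mu$-almost everywhere and that $\Phi'$ is essentially injective on $\hat{\Z}_\sigma$, so that $\Phi'\circ\varphi$ is the desired isomorphism.
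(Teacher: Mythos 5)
Your treatment of the conjugacy, of the left square off the orbit of periodic points, and of the right square follows the same route as the paper: everything rests on Lemma~\ref{anlem}, and your non-periodic computation via $\sigma^k(p_k)\cdots\sigma^0(p_0)a_0=\sigma^k(q_k)\cdots\sigma^0(q_0)$ is the paper's argument made slightly more explicit. The genuine gap is the periodic case. You assert that there $\varphi(Sw)=0\neq\varphi(w)+\delta(w_0)$ and then discard these points as a null set; but the proposition claims the diagram commutes, and in fact it does commute at these points --- your inequality is false. The computation you are missing is exactly the one the paper carries out: if $Sw$ is $\sigma$-periodic then $E_\mathcal{P}(w)=(p_i,a_i,\epsilon)_{i\geq 0}$ with $(p_i)_{i\geq 0}$ periodic of some period $\ell$, and the relation $\sigma^{i+k\ell}(p_{i+k\ell})\cdots p_0w_0=\sigma^{k\ell}(\sigma^i(p_i)\cdots p_0w_0)$ yields, after applying $\delta$,
\[
\sum_{j=0}^{i+k\ell}\delta(p_j)\alpha^j+\delta(w_0)=\alpha^{k\ell}\Big(\sum_{j=0}^{i}\delta(p_j)\alpha^j+\delta(w_0)\Big)\text{.}
\]
Since multiplication by $\alpha$ is a contraction on $\hat{\Z}_\sigma$ in the topology induced by $\Phi'$ (every place in $S_\alpha\setminus\{\p_1\}$ satisfies $\abs{\alpha}_\p<1$), the right-hand side tends to $0$ as $k\to\infty$, so $\varphi(w)+\delta(w_0)=0=\varphi(Sw)$: the wrap-around of the adic successor at its maximal elements is exactly matched by the geometry, and the left square commutes everywhere, not just almost everywhere.

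Your fallback to commutativity off a countable set would still salvage the measure-theoretic isomorphism with $(\mathcal{R}_\sigma,\mathcal{E})$, but it proves a strictly weaker statement than the one asserted, and it does so by way of a claim ($\varphi(w)+\delta(w_0)\neq 0$) that is actually incorrect. The only point where a measure-zero caveat is genuinely needed is the right square, where the map $+\Phi'(\delta(w_0))$ on $\mathcal{R}_\sigma$ is well defined only off the overlaps of the subtiles; that is where the strong coincidence condition enters, as you correctly observe.
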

\begin{proof}
The first statement follows from Lemma \ref{anlem} observing that the action of $\sigma$ on $X_\sigma$ is conjugate to the right extension of elements of $X_\mathcal{P}^l$ by an element that has an empty prefix.

The commutativity of the left diagram is also a consequence of Lemma \ref{anlem}. Let $w\in X_\sigma$ and $E_\mathcal{P}(w)=(p_i,a_i,s_i)_{i\geq 0}$.

If $Sw$ is not a periodic point of $\sigma$, $E_\mathcal{P}(Sw)=(q_i,b_i,t_i)_{i\geq 0}$ is such that there exists an integer $k_0$ with $\sigma^k(p_k)\cdots\sigma(p_0)w_0=\sigma^k(q_k)\cdots\sigma^0(q_0)$, for all $k\geq k_0$. Thus
\begin{align*}
\varphi(Sw) = \sum_{i\geq 0} \delta(q_i)\alpha^i = \sum_{i\geq 0} \delta(p_i)\alpha^i + \delta(w_0) = \varphi(w)+\delta(w_0)\text{.}
\end{align*}
If $Sw$ is a periodic point of $\sigma$ then $E_\mathcal{P}(Sw)=(\epsilon,b_i,t_i)_{i\geq 0}$ and $E_\mathcal{P}(w)=(p_i,a_i,\epsilon)_{i\geq 0}$, with $(p_i)_{i\geq 0}$ periodic with period $\ell$. Thus $\varphi(Sw)=0$ and $\sigma^{i+kl}(p_{i+k\ell})\cdots p_0w_0 = \sigma^{k\ell}(\sigma^i(p_i)\cdots p_0w_0)$
for every $i<\ell$ and every integer $k$. Therefore
\begin{align*}
\varphi(w) &= \lim_{k\to\infty}\sum_{i=0}^k \delta(p_i) \alpha^i = \lim_{k\to\infty} (\delta(p_k)\alpha^k + \cdots+\delta(p_1)\alpha+ \delta(p_0w_0)) -\delta(w_0) \\
&=  \lim_{k\to\infty} \alpha^{k\ell}(\delta(p_i)\alpha^i+\cdots+\delta(p_1)\alpha+\delta(p_0w_0))-\delta(w_0) \\ &= 0-\delta(w_0)=\varphi(Sw)-\delta(w_0)\text{.}
\end{align*}
The commutativity of the right diagram follows simply applying $\Phi'$ extended to elements of $\hat{\Z}_\sigma$ and observing that the addition by $\Phi'(\delta(w_0))$ is well-defined up to a set of measure zero.
\end{proof}

Observe that the adic transformation can be interpreted and computed by Bratteli diagrams (see {\it e.g.\ } \cite{Du:10}).

\end{section}

\begin{section}{Multiple tilings and tilings; property (F)}

In this section we show that the subtiles $\mathcal{R}_\sigma(a)$ induce a multiple tiling of $K_\sigma$ with respect to the translation set $\Gamma$. Moreover, we give a tiling criterion in terms of a finiteness condition of $(\sigma,a)$-expansions.

\subsection{Multiple tiling property}
We call a collection $\mathcal{C}$ of compact subsets of $K_\sigma$ a \emph{multiple tiling} of $K_\sigma$ if each element of $\mathcal{C}$ is the closure of its interior and if there exists a
positive integer $m$ such that $\mu$-almost every point of $K_\sigma$ is contained in exactly $m$ elements
of $\mathcal{C}$. If $m = 1$ then $\mathcal{C}$ is called a \emph{tiling} of $K_\sigma$.

In this section we will prove that every irreducible Pisot substitution induces a multiple tiling of the associated representation space.

A \emph{patch} is defined as a finite subset of $\Gamma$.
We say that $\Gamma$ is \emph{repetitive} (or \emph{quasi-periodic}) if for any patch $P$ there exists a radius $R>0$ such that every ball of radius $R$ in $\Gamma$ contains a translate of $P$.

\begin{lemma}\label{lem:rep}
The translation set $\Gamma$ is repetitive.
\end{lemma}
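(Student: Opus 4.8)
The plan is to exploit the model set description of $\Gamma$ established in Section~\ref{sec:MS}: with respect to the symmetric cut and project scheme $(K_\sigma,\R,\Phi(V\cdot\Z[\alpha^{-1}]))$, each colour class of $\Gamma$ is the (inter) model set $\varUpsilon_a=\Lambda([0,\delta(a)))=\Phi'(\text{Frac}(\sigma,a))$, whose window is the interval $[0,\delta(a))$ sitting in the internal space $\R$. I would reduce repetitivity to a single relative-denseness statement about an auxiliary window built from the given patch.

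First I would fix a patch $P=\{(\Phi'(x_1),a_1),\ldots,(\Phi'(x_r),a_r)\}\subset\Gamma$, so that $x_i\in V\cdot\Z[\alpha^{-1}]\cap[0,\delta(a_i))$ for every $i$. For $\xi\in V\cdot\Z[\alpha^{-1}]$ the translate $P+\Phi'(\xi)$ again lies in $\Gamma$ with matching colours precisely when $x_i+\xi\in[0,\delta(a_i))$ for all $i$; equivalently, the internal coordinate $\xi$ must lie in the set $W_P=\bigcap_{i=1}^r\big([0,\delta(a_i))-x_i\big)\subset\R$. Hence the admissible translation vectors are exactly the points of $\Phi'(\Lambda(W_P))$, i.e.\ of the model set attached to the window $W_P$.

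The crucial point is that $W_P$ has nonempty interior. Indeed, $W_P=\big[-\min_i x_i,\ \min_i(\delta(a_i)-x_i)\big)$, whose left endpoint is $\le 0$ while its right endpoint is $>0$ (using $0\le x_i<\delta(a_i)$); thus $W_P$ has positive length and contains a nonempty open interval $U$. Since $U$ is a bounded interval equal to the interior of its compact closure, the inter model set $\Lambda(U)$ is relatively dense in $K_\sigma$ by \cite[Proposition~2.6]{MO:97}, exactly as in Lemma~\ref{del}. Writing $R'$ for a corresponding covering radius, every ball $B(\gamma_0,R')\subset K_\sigma$ contains some $\Phi'(\xi)$ with $\xi\in U\subset W_P$.

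Finally I would assemble the conclusion. Setting $\rho=\max_i\norm{\Phi'(x_i)}$ and $R=R'+\rho$, given any $\gamma_0$ I choose $\xi$ as above; then $P+\Phi'(\xi)\subset\Gamma$ is a colour-preserving translate of $P$, and each of its points satisfies $\norm{\Phi'(x_i+\xi)-\gamma_0}\le\norm{\Phi'(x_i)}+\norm{\Phi'(\xi)-\gamma_0}\le R$, so the translate lies inside $B(\gamma_0,R)\cap\Gamma$. As $P$ was arbitrary this establishes repetitivity. I expect the only genuine obstacle to be the interior-nonemptiness of $W_P$; once the windows $[0,\delta(a))$ are recognised as honest intervals meeting $0$ on one side, the rest is the standard Delone property of model sets together with the density of $V\cdot\Z[\alpha^{-1}]$ in $\R$ (Kronecker), both of which are already available.
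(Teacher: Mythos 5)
Your proof is correct, but it reaches the conclusion by a different route than the paper. You reduce repetitivity to a single model-set statement: the set of admissible translation parameters for the patch $P$ is exactly $\Lambda(W_P)$ for the intersected window $W_P=\bigcap_i\bigl([0,\delta(a_i))-x_i\bigr)$, you check that $W_P$ has nonempty interior, and you then invoke the relative denseness of model sets with such windows (the same \cite[Proposition~2.6]{MO:97} that the paper already uses in Lemma~\ref{del}). The paper instead only records that translates by $\Phi'(x)$ with $x\in V\cdot\Z[\alpha^{-1}]\cap[0,\varepsilon)$ work (i.e.\ it uses the right half of your window $W_P$) and then proves the relative denseness of $\Phi'\bigl(V\cdot\Z[\alpha^{-1}]\cap[0,\varepsilon)\bigr)$ by hand: it subdivides $[0,\Delta)$ into $N=\lceil 2\Delta/\varepsilon\rceil$ subintervals and corrects an arbitrary point of $\Gamma$ by an integer multiple $m_j x_0$ of a small element $x_0$ so as to land in $[0,\varepsilon)$, at the cost of enlarging the radius by $\max_j\norm{\Phi'(m_jx_0)}$. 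So the paper's argument is the more elementary, self-contained one (it needs only Lemma~\ref{del} for $\Gamma$ itself plus Kronecker denseness of $V\cdot\Z[\alpha^{-1}]$ in $\R$), in the spirit of \cite[Theorem~5.3.13]{BST:10}, whereas yours is essentially the CPS argument of \cite[Proposition~6.72]{Sin:06} that the paper explicitly mentions as the alternative; yours is shorter and makes the mechanism (shrinking the window shrinks the model set but keeps it relatively dense) more transparent, at the price of leaning on the general theory for windows other than the $[0,\delta(a))$ already treated in Lemma~\ref{del}. The only presentational point worth tightening is the appeal to relative denseness of the \emph{inter} model set $\Lambda(U)$: to match the paper's definition of model set literally, replace $U$ by a compact subinterval $[c',d']\subset\operatorname{int}(W_P)$ with $c'<d'$, note $\Lambda([c',d'])\subset\Lambda(W_P)$, and apply \cite[Proposition~2.6]{MO:97} to that genuine model set; the rest of your assembly of the radius $R=R'+\rho$ is fine.
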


This result is already contained in \cite[Proposition~6.72]{Sin:06} in the CPS setting. We present a similar proof in the spirit of \cite[Theorem~5.3.13]{BST:10} in our setting.

\begin{proof}
Let $P=\{(\gamma_k,a_k)\text{, }1\leq k\leq \ell\}$ be a patch of $\Gamma$. We can write each $\gamma_k$ as $\Phi'(x_k)$, for $x_k\in V\cdot\Z[\alpha^{-1}]\cap[0,\delta(a_k))$. Let $R_1$ be such that $B(0,R_1)$ contains the patch $P$. There exists $\varepsilon_k>0$ such that $x_k\in V\cdot\Z[\alpha^{-1}]\cap[0,(1-\varepsilon_k)\delta(a_k))$, for each $1\leq k\leq \ell$.
Set $\varepsilon:=\frac{1}{2}\min_k\varepsilon_k\delta(a_k)$. Then $\Phi'(x)+P$ is in $\Gamma$, for every $x\in V\cdot\Z[\alpha^{-1}]\cap[0,\varepsilon)$.

It remains to prove that there exists $R>0$ such that any ball of radius $R$ in $K_\sigma$ contains a point $\Phi'(x)$ with $x\in V\cdot\Z[\alpha^{-1}]\cap[0,\varepsilon)$. By the denseness of $V\cdot\Z[\alpha^{-1}]$ in $\R$ there exists $x_0\in V\cdot\Z[\alpha^{-1}]\cap[0,\varepsilon/2)$. Let $\Delta=\max_{a\in\mathcal{A}} \delta(a)$.
We can divide $[0,\Delta)$ in $N=\lceil 2\Delta/\varepsilon\rceil$ subintervals $[j\varepsilon/2,(j+1)\varepsilon/2)$ of length $\varepsilon/2$. For each $j\leq N$, there exists $m_j\in\Z$ such that $m_jx_0+[j\varepsilon/2,(j+1)\varepsilon/2)\subset [0,\varepsilon)$.

Fix a point $\eta\in K_\sigma$. We know by Lemma \ref{del} that there is $R_2>0$ such that every ball of radius $R_2$ contains at least one element of $\Gamma$. In particular, the ball $B(\eta,R_2)$ contains a point $\Phi'(x)$ with $x\in V\cdot\Z[\alpha^{-1}]\cap[0,\Delta)$. Thus there exists $j\in\{0,\ldots, N\}$ such that $x\in V\cdot\Z[\alpha^{-1}]\cap[j\varepsilon/2,(j+1)\varepsilon/2)$, and, hence, $m_j\in\mathbb{Z}$ such that $m_jx_0+x\in [0,\varepsilon)$. This implies that $\Phi'(x+m_jx_0)+P$ occurs in $\Gamma$.

Therefore, the ball centered in $\eta$ with radius $R:=R_1+R_2+\max_j\norm{\Phi'(m_jx_0)}$ contains a translated copy of the patch $P$ and the lemma is proved.
\end{proof}

We are now in a position to state the multiple tiling result (see also \cite[Theorem~5.3.13]{BST:10} for irreducible unit substitutions).

\begin{theorem}\label{multi}
Let $\sigma$ be an irreducible Pisot substitution. The collection $\mathcal{C}_\sigma=\{\mathcal{R}_\sigma(a)+\gamma: (\gamma,a)\in \Gamma\}$ is a multiple tiling of $K_\sigma$.
\end{theorem}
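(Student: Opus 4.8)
The plan is to verify the two defining properties of a multiple tiling separately and to reduce the whole statement to showing that the covering multiplicity is almost everywhere constant. The first property, that every element $\mathcal{R}_\sigma(a)+\gamma$ of $\mathcal{C}_\sigma$ is the closure of its interior, is immediate from the second assertion of Theorem~\ref{basictheorem}. For the second property, Proposition~\ref{coverprop} already gives that $\mathcal{C}_\sigma$ is a uniformly locally finite covering of $K_\sigma$; hence the covering degree $\deg(\V{z})=\#\{(\gamma,a)\in\Gamma:\V{z}\in\mathcal{R}_\sigma(a)+\gamma\}$ is a well-defined, bounded, everywhere positive integer. Since each $\partial\mathcal{R}_\sigma(a)$ has measure zero (third assertion of Theorem~\ref{basictheorem}) and the covering is locally finite, the union of all tile boundaries is a measure-zero set; off it every point lies in the interior of each tile containing it. I will call such points \emph{exclusive}, and the goal is to prove that $\deg$ is constant on the full-measure set of exclusive points.

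The key structural input I would establish first is that $\deg$ is invariant under multiplication by $\alpha$. Using the graph directed set equation (first assertion of Theorem~\ref{basictheorem}) together with Proposition~\ref{inv}, each tile $\mathcal{R}_\sigma(a)+\gamma$ decomposes, measure-disjointly, into the pieces $\alpha(\mathcal{R}_\sigma(b)+\gamma')$ with $(\gamma',b)\in\T(\gamma,a)$; and as $(\gamma,a)$ runs through $\Gamma$, these pieces run exactly once through $\{\alpha(\mathcal{R}_\sigma(b)+\gamma'):(\gamma',b)\in\Gamma\}$, since the images $\T(\gamma,a)$ are pairwise disjoint (Proposition~\ref{inv}) and exhaust $\Gamma$ (because $T_\sigma$ maps $\text{Frac}(\sigma)$ onto itself by Lemma~\ref{le:T}). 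Thus $\alpha\mathcal{C}_\sigma$ is a measure-disjoint refinement of $\mathcal{C}_\sigma$, and counting tiles through a generic point gives $\deg(\alpha^{-1}\V{z})=\deg(\V{z})$, whence $\deg(\alpha^{k}\V{z})=\deg(\V{z})$ for all $k\in\Z$ and almost every $\V{z}$.

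With this in hand I would run a maximality argument. Let $m$ be the maximal value of $\deg$ on exclusive points (attained, as $\deg$ is a bounded integer function), fix an exclusive point $\V{z}^*$ realizing it, and use local finiteness to choose an open ball $U^*\ni\V{z}^*$ lying in the interior of the $m$ tiles covering $\V{z}^*$ and disjoint from every other tile; then $U^*$ is covered exactly $m$ times by a finite patch $P^*\subset\Gamma$. Now take an arbitrary exclusive point $\V{w}$, with an analogous ball $U_w$ on which $\deg\equiv\deg(\V{w})=:m_w$. Since multiplication by $\alpha$ is a uniform contraction, $\alpha^{-1}$ is a uniform expansion, so for $k$ large the ball $\alpha^{-k}U_w$ has radius exceeding the repetitivity radius of $P^*$ furnished by Lemma~\ref{lem:rep}; it therefore contains a translate $U^*+t$ with $P^*+t\subset\Gamma$. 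The translated patch covers $U^*+t$ exactly $m$ times, and since $m$ is the maximal degree no further tile can meet $U^*+t$ on a set of positive measure, so almost every point of $U^*+t$ has degree $m$. By the $\alpha$-invariance of $\deg$, almost every point of $\alpha^{k}(U^*+t)\subseteq U_w$ has degree $m$ as well; comparing with $\deg\equiv m_w$ on $U_w$ yields $m_w=m$. As $\V{w}$ was an arbitrary exclusive point and exclusive points have full measure, $\deg\equiv m$ almost everywhere, which is exactly the asserted multiple tiling with multiplicity $m$.

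The two places I expect to be delicate are precisely these last mechanisms. The first is the $\alpha$-invariance of the covering degree: one must check carefully, via Proposition~\ref{inv} and the measure-disjointness in Theorem~\ref{basictheorem}, that the inflated tiles of $\alpha\mathcal{C}_\sigma$ subdivide the tiles of $\mathcal{C}_\sigma$ with neither overlaps nor omissions, so that the multiplicity is genuinely preserved. The second, and the real heart of the proof, is that repetitivity by itself only guarantees the \emph{appearance} of the patch $P^*$, not that the appearing copy is the complete local configuration; it is running the argument at the maximal degree $m$ (rather than the minimal one) that converts this one-sided information into an exact count, since any surplus tile around the translate would violate maximality.
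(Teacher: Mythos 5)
Your proposal is correct and follows essentially the same route as the paper: measure-zero boundaries give a well-defined generic covering degree, the graph-directed set equation together with Proposition~\ref{inv} makes that degree invariant under inflation by $\alpha^{-1}$, and repetitivity of $\Gamma$ (Lemma~\ref{lem:rep}) transports a patch realizing one degree into an inflated neighbourhood realizing another. The paper phrases the final comparison as a contradiction between two distinct degrees $\ell_1<\ell_2$ rather than via the maximal degree, and leaves the $\alpha$-invariance of the degree more implicit, but these are only cosmetic differences.
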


\begin{proof}
Assume that the assertion of the theorem is false. Then there exist $\ell_1, \ell_2 \in \mathbb{N}$, $\ell_1 < \ell_2$, and $M_1, M_2 \subset K_\sigma$ with $\mu(M_i)>0$ such that each element of $M_i$ is covered exactly $\ell_i$ times by the elements of $\mathcal{C}_\sigma$ ($i=1,2$). As the boundaries of the subtiles have zero measure by Theorem~\ref{basictheorem}~(\ref{basicthree}), there exist points $\eta_i \in M_i$  that are not contained in the boundary of any element of $\mathcal{C}_\sigma$. Thus we can find $\varepsilon>0$ such that $B(\eta_i,\varepsilon)$ is covered exactly $\ell_i$ times by the collection $\mathcal{C}_\sigma$. This implies that there exist a  patch $P_2\subset\Gamma$ with $\ell_2$ elements such that
\[
B(\eta_2,\varepsilon)\subset \bigcap_{(\gamma,a)\in P_2}\mathcal{R}_\sigma(a)+\gamma\text{.}
 \]
Consider the inflated ball $\alpha^{-k}B(\eta_1,\varepsilon)$. By the same arguments presented above, each point of $\alpha^{-k}B(\eta_1,\varepsilon)$ is covered by exactly $\ell_1$ tiles of the collection $\alpha^{-k}\mathcal{C}_\sigma$.
Each of the inflated tiles of $\alpha^{-k}\mathcal{C}_\sigma$ can be decomposed in a finite union of tiles in $\mathcal{C}_\sigma$ which are pairwise disjoint in measure. Thus almost each point in $\alpha^{-k}B(\eta_1,\varepsilon)$ is contained in exactly $\ell_1$ tiles of $\mathcal{C}_\sigma$. By Lemma~\ref{lem:rep} we can pick a suitable large $k$ such that $\alpha^{-k}B(\eta_1,\varepsilon)$ contains a translated copy $P_2+\gamma$, for some $\gamma\in\Gamma$. Therefore $B(\eta_2,\varepsilon)+\gamma$ is contained in $\alpha^{-k}B(\eta_1,\varepsilon)$, for $k$ large enough. The ball $B(\eta_2,\varepsilon)$ is covered exactly $\ell_2$ times, consequently $B(\eta_2,\varepsilon)+\gamma$ is covered at least $\ell_2$ times, but this yields a contradiction since almost every point in $\alpha^{-k}B(\eta_1,\varepsilon)$ is contained in exactly $\ell_1$ tiles, and $\ell_1<\ell_2$.
\end{proof}

We state the following conjecture (see also the recent paper by Barge {\it et al.}~\cite{BBJS:12} where it is proposed to replace the irreducibility condition by a topological condition on the tiling space associated with $\sigma$).

\begin{conjecture}
Every irreducible Pisot substitution $\sigma$ induces a self-replicating tiling of its associated representation space $K_\sigma$.
\end{conjecture}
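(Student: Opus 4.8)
The plan is to upgrade the multiple tiling produced in Theorem~\ref{multi} to a genuine tiling by showing that the covering multiplicity $m$ equals $1$. Since the boundaries of the subtiles are $\mu$-null (Theorem~\ref{basictheorem}~(\ref{basicthree})) and, by the argument of Theorem~\ref{multi}, the multiplicity is constant almost everywhere, it suffices to pin down $m$ at a single well-chosen point. The natural strategy, transporting the unit-case theory of overlap and super coincidences (in the spirit of Solomyak, Ito--Rao and Barge--Kwapisz) to the adelic space $K_\sigma$, is to analyse the overlaps of the tiles $\mathcal{R}_\sigma(a)+\gamma$, $(\gamma,a)\in\Gamma$, directly through the self-affine structure \eqref{gifs}.

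First I would introduce the \emph{overlap set}, i.e.\ the collection of triples $(\mathbf{z},a,b)$ recording a pair of tiles $\mathcal{R}_\sigma(a)$ and $\mathcal{R}_\sigma(b)+\mathbf{z}$ (with $\mathbf{z}=\Phi'(x)$, $x\in V\cdot\Z[\alpha^{-1}]$) whose intersection has positive measure. Iterating the graph directed set equation \eqref{gifs} shows that multiplication by $\alpha^{-1}$ sends overlaps to overlaps, so the overlap set is itself governed by a graph directed iterated function system on $K_\sigma$ whose finitely many \emph{overlap types} can be read off from the prefix automaton together with the admissible differences $x$. The multiplicity $m$ is then encoded in this overlap graph, and $m=1$ is \emph{equivalent} to a combinatorial \emph{coincidence condition}: every overlap type eventually renormalises to a genuine coincidence $(\mathbf{0},a,a)$. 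Here the strong coincidence condition (Definition~\ref{defscc}) furnishes the first, local coincidences, while the conjugacy between the domain exchange $\mathcal{E}$, the adic transformation on $\hat{\Z}_\sigma$, and the shift $(X_\sigma,S)$ established in Section~\ref{sec-adic} would be used to translate $m=1$ into pure point spectrum of $(X_\sigma,S)$ --- the form in which the problem is classically attacked.

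The second ingredient is the finiteness criterion announced at the start of this section (the extension of property~(F)): whenever every nonnegative element of $V\cdot\Z[\alpha^{-1}]$ has a \emph{finite} $(\sigma,a)$-expansion, the origin $\mathbf{0}$ becomes an exclusive inner point of $\mathcal{R}_\sigma$, and the almost-everywhere constant multiplicity forces $m=1$ at once. Combined with the repetitivity of $\Gamma$ (Lemma~\ref{lem:rep}), which lets one propagate exclusiveness from one point to a relatively dense set, the conjecture would thereby be reduced to those substitutions \emph{failing} property~(F), for which one must show that the overlap graph admits no cycle avoiding a coincidence.

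The main obstacle is precisely this last reduction: proving that the coincidence condition holds for \emph{every} irreducible Pisot substitution is equivalent to the Pisot substitution conjecture, which is open already in the unit case. The adelic machinery assembled above (the graph directed equation \eqref{gifs}, the repetitivity of Lemma~\ref{lem:rep}, and the multiple tiling of Theorem~\ref{multi}) reduces the statement to a finite, algorithmically checkable coincidence property, and shows that verifying it for a given $\sigma$ certifies a tiling; but establishing it \emph{uniformly} --- ruling out a coincidence-free cycle in the overlap graph for all $\sigma$ simultaneously --- is the hard core that remains unresolved. I therefore expect the realistic outcome of this programme to be a reduction theorem together with a decision procedure, rather than an unconditional proof of the conjecture.
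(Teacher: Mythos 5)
You have not proved the statement, and neither does the paper: this is stated as a \emph{conjecture}, with no proof offered, and it is the non-unit analogue of the Pisot substitution conjecture, which is open already in the unit case. Your proposal is candid about this, and your assessment of where the difficulty lies is accurate, so there is no hidden gap in the sense of a flawed argument --- there is simply no argument that closes the problem.

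Two remarks on how your programme lines up with what the paper actually does. First, the conditional result you describe in your second ingredient --- geometric property (F) together with the strong coincidence condition forces the multiple tiling of Theorem~\ref{multi} to be a tiling, via exclusiveness of $\mathbf{0}$ --- is precisely the content of the paper's final theorem of that section, so that part of your plan is realized verbatim in the text. Second, the overlap-graph/coincidence machinery you propose to transport from the unit case is \emph{not} developed in the paper for the adelic space $K_\sigma$. It is plausible: finiteness of the set of overlap types would follow from the uniform local finiteness of the covering (Proposition~\ref{coverprop}), the Delone property of $\Gamma$ (Lemma~\ref{del}), and the uniform bound $M$ on the diameter of the subtiles, and the renormalization of overlaps is indeed driven by the set equation \eqref{gifs}. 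But none of this is written down in the paper, and even once it is, it only converts the conjecture into a combinatorial coincidence property that must be verified substitution by substitution. Your conclusion --- that the realistic outcome is a reduction theorem plus a decision procedure rather than an unconditional proof --- is the correct one; just be aware that the reduction itself would require a separate, careful write-up in the non-unit setting, since the paper stops at the conditional tiling criterion.
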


\subsection{Finiteness property}

In this section we provide a tiling criterion for $\mathcal{C}_\sigma$ based on the \emph{geometric property (F)}. We take inspiration mainly from \cite{ST:09}.

Consider the set
\begin{equation*}
\mathcal{U}:=\bigcup_{a\in\mathcal{A}} (\V{0},a)\subset \Gamma.
\end{equation*}
It is easy to see that $\mathcal{U}\subseteq \T(\mathcal{U})$: indeed, $(\V{0},b)\in \T(\V{0},a)$ if $\sigma(b)=as$, {\it i.e.}, if $p=\epsilon$.
Thus $(\V{0},b)\in \T(\V{0},a)$ where $a$ is the first letter of $\sigma(b)$.
Hence the sequence $((\T)^m(\mathcal{U}))_{m\geq 0}$ is an increasing sequence of subsets of $\Gamma$.

\begin{definition}
Let $\sigma$ be an irreducible Pisot substitution. We say that the substitution $\sigma$ satisfies the
\emph{geometric property (F)} if the iterations of
$\T$ on $\mathcal{U}$ eventually cover the whole self-replicating translation set $\Gamma$, \emph{i.e.}, if
\begin{equation*}
\Gamma=\bigcup_{m\geq 0}T_{\rm ext}^{-m}(\mathcal{U})\text{.}
\end{equation*}
\end{definition}
The geometric property (F) is an equivalent formulation of the finiteness property firstly introduced in \cite{FS:92} in the beta-numeration framework and further studied in \cite{A:00}.
Here we shall interpret it as a finiteness condition on $(\sigma,b)$-expansions.
Indeed, given $(\gamma,b)\in\Gamma$, $\gamma$ can be written as $\Phi'(x)$ where $x\in V\cdot\Z[\alpha^{-1}]\cap[0,\delta(b))$ which has a unique $(\sigma,b)$-expansion by Proposition \ref{DT}, \emph{i.e.},
$x=\sum_{i\geq 1}\delta(p_i)\alpha^{-i}$. Then we can say as well that $(\gamma,b)$ has a formal $(\sigma,b)$-expansion in $K_\sigma$, namely $\gamma=\sum_{i\geq 1}\Phi'(\delta(p_i)\alpha^{-i})$.
\begin{proposition}
The substitution $\sigma$ satisfies the geometric property (F) if and only if every point $(\gamma,b)\in\Gamma$ has a unique finite $(\sigma,b)$-expansion.
\end{proposition}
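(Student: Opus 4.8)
The plan is to reduce the biconditional to a purely combinatorial statement about the digit sequences produced by $T_\sigma$, and then transport it to $K_\sigma$ through the conjugacy between $\T$ and $T_\sigma^{-1}$. First I would record two inclusions that hold unconditionally. Since $\mathcal{U}=\bigcup_{a\in\mathcal{A}}(\V{0},a)\subseteq\Gamma$ and $\Gamma$ is invariant under $\T$ by Proposition~\ref{inv}, one has $\bigcup_{m\ge0}T_{\rm ext}^{-m}(\mathcal{U})\subseteq\Gamma$. Hence the geometric property (F) is equivalent to the reverse inclusion $\Gamma\subseteq\bigcup_{m\ge0}T_{\rm ext}^{-m}(\mathcal{U})$. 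Moreover, by Proposition~\ref{DT} every $x\in\mathrm{Frac}(\sigma,b)$ has a \emph{unique} $(\sigma,b)$-expansion, so the uniqueness asserted in the statement is automatic and only the existence of a \emph{finite} expansion is at stake. Thus it suffices to prove the set identity
\[ \bigcup_{m\ge0}T_{\rm ext}^{-m}(\mathcal{U})=\{(\gamma,b)\in\Gamma : (\gamma,b)\text{ has a finite }(\sigma,b)\text{-expansion}\}. \]

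The core is the analogous identity on the real side. Writing $(\gamma,b)=(\Phi'(x),b)$ with $x\in\mathrm{Frac}(\sigma,b)$, I would use that $T_{\rm ext}^{-1}$ is by construction the extension of $T_\sigma^{-1}$ to $K_\alpha\times\mathcal{A}$ (equivalently, the conjugacy of Lemma~\ref{conj}), so that $\Phi'$ intertwines the two maps on $\mathbb{Q}(\alpha)$-points: since $\V{0}=\Phi'(0)$, one has $(\Phi'(x),b)\in T_{\rm ext}^{-m}(\V{0},a)$ if and only if $(x,b)\in T_\sigma^{-m}(0,a)$. It then remains to show that $x$ has a finite $(\sigma,b)$-expansion exactly when $(x,b)\in T_\sigma^{-m}(0,a)$ for some $m\ge0$ and $a\in\mathcal{A}$. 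Here I would exploit that the $T_\sigma$-orbit computes the expansion by shifting it: if $(x)_{\sigma,b}=.\delta(p_1)\delta(p_2)\cdots$, then $T_\sigma^{m}(x,b)$ is the point whose expansion is $.\delta(p_{m+1})\delta(p_{m+2})\cdots$. Because the left eigenvector $\V{v}_\alpha$ is strictly positive, $\delta(p)=0$ holds only for $p=\epsilon$, so the expansion of $0$ is the all-empty-prefix (all-zero-digit) sequence and $(0,a)$ is precisely the point carrying that sequence at state $a$. Consequently $T_\sigma^m(x,b)=(0,a)$ for some $a$ if and only if $p_i=\epsilon$ for all $i>m$, i.e.\ the expansion terminates.

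Combining the two steps gives the displayed identity, and reading it together with the equivalence ``(F) $\Leftrightarrow\ \Gamma\subseteq\bigcup_{m}T_{\rm ext}^{-m}(\mathcal{U})$'' yields the proposition. The step I expect to be the main obstacle is the transfer between the two worlds, namely verifying cleanly that $(\Phi'(x),b)\in T_{\rm ext}^{-m}(\V{0},a)\iff(x,b)\in T_\sigma^{-m}(0,a)$: this requires that $\Phi'$ be injective on $V\cdot\Z[\alpha^{-1}]$ (so that a point of $\Gamma$ determines its real coordinate $x$, which is guaranteed by the cut-and-project structure), and that the terminating tail really corresponds to landing on the zero orbit $(0,a)$ while keeping the whole sequence $(\sigma,b)$-admissible in the sense of Proposition~\ref{DT}. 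Once this bookkeeping is in place, everything else is a direct translation between the termination of a Dumont--Thomas expansion and reaching $(\V{0},a)$ under the iterated preimage map.
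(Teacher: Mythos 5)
Your argument is correct, and its first half coincides with the paper's: both directions ultimately reduce the geometric property (F) to membership of $(\gamma,b)$ in some $T_{\rm ext}^{-m}(\mathbf{0},a)$ and then unpack what that means for the Dumont--Thomas digits. The forward implication is essentially identical to the paper's, which expands $(\gamma,b)\in T_{\rm ext}^{-m}(\mathbf{0},a)$ via the explicit $m$-fold iteration formula \eqref{E1fold} to read off a terminating expansion; your transfer through the conjugacy $\T\circ\Phi=\Phi\circ T_\sigma^{-1}$ together with injectivity of $\Phi'$ on $V\cdot\Z[\alpha^{-1}]$ is the same computation in different clothing. Where you genuinely diverge is the converse. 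The paper argues that a finite expansion forces $\alpha^m\gamma\in\mathcal{R}_\sigma(a)$ and then invokes the iterated set equation \eqref{kfold} for the subtiles to locate $(\gamma,b)$ inside $T_{\rm ext}^{-m}(\mathbf{0},a)$; you instead stay entirely on the one-dimensional side and observe that, since $\mathbf{v}_\alpha$ is strictly positive, $\delta(p)=0$ forces $p=\epsilon$, so the $T_\sigma$-orbit of $(0,a)$ never leaves $\{0\}\times\mathcal{A}$ and a terminating expansion is literally the statement $T_\sigma^m(x,b)=(0,a)$, i.e.\ $(x,b)\in T_\sigma^{-m}(0,a)$. Your route buys two things: it avoids any appeal to the Rauzy fractals and their subdivision structure (so the proposition becomes a purely symbolic/dynamical fact about $T_\sigma$), and it makes the last step of the converse sharper than the paper's, where passing from ``$\gamma$ lies in the union $\bigcup_{(\eta,c)}\mathcal{R}_\sigma(c)+\eta$'' to ``$(\gamma,b)\in T_{\rm ext}^{-m}(\mathbf{0},a)$'' is left somewhat implicit. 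The price is that you must verify the bookkeeping you yourself flag — that the orbit-generated digit string is the admissible expansion of Proposition~\ref{DT} and that $\Phi'$ separates points of $V\cdot\Z[\alpha^{-1}]$ — but both are already available in the paper (the discussion following \eqref{T-1} and the cut and project scheme, respectively), so there is no gap.
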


\begin{proof}
Let $(\gamma,b)\in\Gamma$. If property (F) holds, then $(\gamma,b)\in T_{\rm ext}^{-m}(\V{0},a)$, for some $m\geq 0$ and $a\in\mathcal{A}$. Thus, using (\ref{E1fold}) we get
\begin{equation}\label{hary} \gamma=\alpha^{-m}\Phi'(\delta(p_0))+\alpha^{-m+1}\Phi'(\delta(p_1))+\cdots+\alpha^{-1}\Phi'(\delta(p_{m-1}))\text{,} \end{equation}
where $b\stackrel{p_{m-1}}{\longrightarrow} \cdots\stackrel{p_1}{\longrightarrow}a_1\stackrel{p_0}{\longrightarrow} a_0$ is a walk in the prefix automaton ending at $a=a_0$.

On the other hand, suppose that $(\gamma,b)\in\Gamma$ has a unique finite $(\sigma,b)$-expansion
$\gamma=\alpha^{-1}\Phi'(\delta(p_1))+\cdots+\alpha^{-m}\Phi'(\delta(p_m))$ with $b\stackrel{p_1}{\longrightarrow} \cdots\stackrel{p_{m-1}}{\longrightarrow}a_{m-1}\xrightarrow{p_m} a$. This yields that $\alpha^m\gamma\in\mathcal{R}_\sigma(a)$ and using the iterated set equation in (\ref{kfold}) we get
\[
\gamma\in \bigcup_{(\eta,c)\in T_{\rm ext}^{-m}(\V{0},a)}\mathcal{R}_\sigma(c)+\eta\text{.}
\]
Thus we may conclude that $(\gamma,b)\in T_{\rm ext}^{-m}(\V{0},a)$.
\end{proof}

For an irreducible Pisot substitution $\sigma$ satisfying the geometric property (F), it is immediate from Proposition \ref{coverprop} and the definition of subtiles that every $\V{z}\in K_\sigma$ admits a $(\sigma,a)$-expansion in $K_\sigma$ for some $a\in\mathcal{A}$ ({\it cf.}  \cite[Proposition~3.9]{ST:09}), {\it i.e.},
\[
\V{z}=\sum_{i=m}^\infty \Phi'(\delta(p_i)\alpha^i)\text{, }\quad m\in\Z\text{.}
\]

In the context of beta-numeration, Akiyama \cite{A:02} proved that property (F) is equivalent to the fact that $\V{0}$ is an \emph{exclusive} inner point of the central tile. Our next aim is to carry over this statement to the substitution context.

\begin{definition}
A point $\eta\in K_\sigma$ is an \emph{exclusive} inner point of a
collection of finitely many tiles in a multiple tiling if it is contained exclusively
in this collection and in no other tile of the multiple tiling.
\end{definition}

\begin{definition}
The \emph{zero-expansion graph} $\mathcal{G}^{(0)}$ of $\sigma$ is the directed graph such that the following conditions hold.
\begin{itemize}
\item The nodes $(\gamma,a)\in \Gamma$ are such that $\norm{\gamma}\leq M$, where $M$ is taken as in Equation (\ref{ball}).
\item There is a directed edge from $(\gamma_1,a_1)$ to $(\gamma_2,a_2)$ if and only if $(\gamma_2,a_2)\in \T(\gamma_1,a_1)$.
\item Every node is the starting point of an infinite walk.
\end{itemize}
\end{definition}
The zero-expansion graph is used to characterize all the elements $(\gamma,a)\in\Gamma$ for which the tile $\mathcal{R}_\sigma(a)+\gamma$ contains $\V{0}$. Suppose $\V{0}\in\mathcal{R}_\sigma(a)+\gamma$. This implies that $\gamma\in B(\V{0},M)$, where $M$ is as in (\ref{ball}).

\begin{proposition}
The zero-expansion graph $\mathcal{G}^{(0)}$ of an irreducible Pisot substitution $\sigma$ is well defined and finite. A pair $(\gamma,a)$ is a node of this graph if and only if $\mathbf{0}\in\mathcal{R}_\sigma(a)+\gamma$.
\end{proposition}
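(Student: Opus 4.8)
The plan is to dispose first of finiteness and well-definedness, and then to prove the equivalence by matching infinite walks in the graph with the series representation of the points of the subtiles.

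\emph{Finiteness and well-definedness.} First I would observe that the set of candidate nodes $N_0:=\{(\gamma,a)\in\Gamma:\norm{\gamma}\le M\}=\Gamma\cap(\overline{B(\V{0},M)}\times\mathcal{A})$ is finite. This is immediate from Lemma~\ref{del}: a Delone set is uniformly discrete and hence meets every bounded set in finitely many points. The edge relation $(\gamma_2,a_2)\in\T(\gamma_1,a_1)$ then defines a finite directed graph on $N_0$, and the nodes of $\mathcal{G}^{(0)}$ are precisely those vertices of this finite graph from which an infinite walk inside $N_0$ emanates; equivalently, since $N_0$ is finite, those from which a cycle is reachable. Deleting the vertices that cannot reach a cycle does not affect reachability for the survivors, so this prescription is unambiguous and $\mathcal{G}^{(0)}$ is a well-defined finite graph.

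\emph{Nodes produce points of the subtile.} Suppose $(\gamma,a)$ is a node and fix an infinite walk $(\gamma_0,a_0)\to(\gamma_1,a_1)\to\cdots$ with $(\gamma_0,a_0)=(\gamma,a)$, where $a_{i+1}\xrightarrow{p_i}a_i$ and $\gamma_{i+1}=\alpha^{-1}(\gamma_i+\Phi'(\delta(p_i)))$. Solving the recursion backwards gives the telescoping identity
\[ \gamma=\alpha^k\gamma_k-\sum_{i=0}^{k-1}\Phi'(\delta(p_i)\alpha^i)\qquad(k\ge1). \]
Since every $\gamma_k$ lies in $\overline{B(\V{0},M)}$ and multiplication by $\alpha$ is a uniform contraction in $K_\sigma$, the term $\alpha^k\gamma_k$ tends to $\V{0}$, whence $-\gamma=\sum_{i\ge0}\Phi'(\delta(p_i)\alpha^i)$. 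Each partial sum $\sum_{i=0}^{k-1}\Phi'(\delta(p_i)\alpha^i)$ belongs to $\Phi'(\Z_{\sigma,a}^{(k)})=\pi(\alpha^kT_{\rm ext}^{-k}(\V{0},a))$ (compare \eqref{E1fold}), because the truncated walk $a_k\xrightarrow{p_{k-1}}\cdots\xrightarrow{p_0}a$ ends at $a$. As these partial sums converge to $-\gamma$ and $\mathcal{R}_\sigma(a)=\mathop{\rm lim_H}_{k\to\infty}\Phi'(\Z_{\sigma,a}^{(k)})$ by \eqref{subnew}, I would conclude $-\gamma\in\mathcal{R}_\sigma(a)$, that is, $\V{0}\in\mathcal{R}_\sigma(a)+\gamma$.

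\emph{Points of the subtile produce nodes.} Conversely assume $\V{0}\in\mathcal{R}_\sigma(a)+\gamma$, i.e.\ $-\gamma\in\mathcal{R}_\sigma(a)$. Then $\norm{\gamma}\le M$ because $\mathcal{R}_\sigma(a)\subseteq B(\V{0},M)$, so $(\gamma,a)\in N_0$. Using the identification $\mathcal{R}_\sigma(a)=\Phi'(\hat\Z_{\sigma,a})$ from the projective-limit subsection I would write $-\gamma=\sum_{i\ge0}\Phi'(\delta(p_i)\alpha^i)$ for some admissible left-infinite walk $\cdots\xrightarrow{p_1}a_1\xrightarrow{p_0}a_0$ with $a_0=a$, and set $\gamma_0=\gamma$, $\gamma_{i+1}=\alpha^{-1}(\gamma_i+\Phi'(\delta(p_i)))$. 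A short induction then gives $-\gamma_i=\sum_{k\ge0}\Phi'(\delta(p_{i+k})\alpha^k)\in\mathcal{R}_\sigma(a_i)\subseteq B(\V{0},M)$, so $\norm{\gamma_i}\le M$ for every $i$; moreover $(\gamma_i,a_i)\in\Gamma$ by the $\T$-invariance of $\Gamma=\pi(\mathcal{S})$ coming from Proposition~\ref{inv}. Hence $(\gamma_0,a_0)\to(\gamma_1,a_1)\to\cdots$ is an infinite walk inside $N_0$, so $(\gamma,a)$ is a node.

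The telescoping computation and the induction are routine. The step that needs the most care is the passage to the limit in the second paragraph: one must justify that a limit of points $x_k\in\Phi'(\Z_{\sigma,a}^{(k)})$ indeed lies in the Hausdorff limit $\mathcal{R}_\sigma(a)$, which I would argue from the existence of this limit (established in \eqref{subnew}) together with the compactness of the ambient ball $B(\V{0},M)$; and in the third paragraph one must ensure the constructed walk never leaves $N_0$, which is exactly what the boundedness $\mathcal{R}_\sigma(a_i)\subseteq B(\V{0},M)$ guarantees.
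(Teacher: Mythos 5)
Your proof is correct and follows essentially the same route as the paper: finiteness from the Delone property of $\Gamma$, the telescoping identity $\gamma=\alpha^k\gamma_k-\sum_{i<k}\Phi'(\delta(p_i)\alpha^i)$ with $\alpha^k\gamma_k\to\V{0}$ for one direction, and the tail sequence $\gamma_\ell$ together with the $\T$-invariance of $\Gamma$ for the converse. The only (harmless) cosmetic difference is that you pass through the Hausdorff-limit description \eqref{subnew} where the paper invokes the convergent power series directly.
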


\begin{proof}
The graph is finite since the nodes are elements of the Delone set $\Gamma$ with bounded norm.
Consider a node $(\gamma,a)=(\gamma_0,a_0)\in\mathcal{G}^{(0)}$ and the infinite walk $\{(\gamma_k,a_k)\}_{k\geq 0}$ starting from it. Then, by definition of edges, we get a left-infinite walk in the prefix automaton $\cdots\stackrel{p_2}{\longrightarrow}a_2\stackrel{p_1}{\longrightarrow}a_1\stackrel{p_0}{\longrightarrow}a_0$ and
\[ \gamma=-\Phi'(\delta(p_0))-\alpha\Phi'(\delta(p_1))
-\cdots-\alpha^k\Phi'(\delta(p_k))+\alpha^{k+1}\gamma_{k+1}\text{.} \]
Since multiplication by $\alpha$ is a contraction and $\norm{\gamma_k}$ is uniformly bounded in $k$, we obtain for $k\rightarrow\infty$ a convergent power series: $\gamma=-\sum_{k\geq 0}\alpha^k\Phi'(\delta(p_k))$.
Thus $-\gamma\in\mathcal{R}_\sigma(a)$ and hence $\V{0}\in\mathcal{R}_\sigma(a)+\gamma$.

Suppose conversely that $\V{0}\in\mathcal{R}_\sigma(a)+\gamma$, for $(\gamma,a)\in\Gamma$. Then 
$\gamma=-\sum_{k\geq 0}\alpha^k\Phi'(\delta(p_k))$, where $(p_k)_{k\geq 0}$ is the labeling of a left-infinite walk in the prefix automaton ending at state $a$.
Let $\gamma_\ell=-\sum_{k\geq 0}\alpha^k\Phi'(\delta(p_{k+\ell}))$. Each $\gamma_\ell\in B(\V{0},M)$ and $\alpha\gamma_{\ell+1}=\gamma_\ell+\Phi'(\delta(p_\ell))$, \emph{i.e.}, $(\gamma_{\ell+1},a_{\ell+1})\in \T(\gamma_\ell,a_\ell)$.
By induction $(\gamma_\ell,a_\ell)\in\Gamma$ for all $\ell\in\N$, since this holds for $\gamma=\gamma_0$ and $\Gamma$ is invariant under $\T$. Hence, $(\gamma_k,a_k)_{k\geq 0}$ is an infinite walk in the zero-expansion graph starting from $(\gamma,a)$.
\end{proof}

\begin{lemma}
Let $\sigma$ be an irreducible Pisot substitution that satisfies the strong coincidence condition. Then $\sigma$ satisfies the geometric property (F) if and only if
$\mathbf{0}$ is an exclusive inner point of the central tile $\mathcal{R}_\sigma$.
\end{lemma}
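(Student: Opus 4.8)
The plan is to reduce the statement to a purely combinatorial condition on the \emph{zero-expansion graph} $\mathcal{G}^{(0)}$ and then treat the two implications separately. First I would record that a pair $(\gamma,a)\in\Gamma$ is a node of $\mathcal{G}^{(0)}$ precisely when $\mathbf{0}\in\mathcal{R}_\sigma(a)+\gamma$ (this is exactly the content of the proposition characterizing $\mathcal{G}^{(0)}$). Writing $\mathcal{N}$ for the node set, one has $\mathcal{U}\subseteq\mathcal{N}$ because $\mathbf{0}\in\mathcal{R}_\sigma(a)$ for every $a$, and I claim that ``$\mathbf{0}$ is an exclusive inner point of $\mathcal{R}_\sigma$'' is equivalent to $\mathcal{N}=\mathcal{U}$. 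Indeed, if $\mathcal{N}=\mathcal{U}$ then, since $\Gamma$ is a Delone set (Lemma \ref{del}) and each subtile is compact and contained in $B(\mathbf{0},M)$, only finitely many tiles of $\mathcal{C}_\sigma$ meet a ball $B(\mathbf{0},\varepsilon)$, and those not containing $\mathbf{0}$ stay at a positive distance from it; hence for small $\varepsilon$ only the subtiles $\mathcal{R}_\sigma(a)$ meet $B(\mathbf{0},\varepsilon)$. As $\mathcal{C}_\sigma$ covers $K_\sigma$ (Proposition \ref{coverprop}), this forces $B(\mathbf{0},\varepsilon)\subseteq\mathcal{R}_\sigma$, so the inner-point property is automatic and exclusivity is exactly the statement $\mathcal{N}=\mathcal{U}$. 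It thus suffices to prove that (F) holds if and only if $\mathcal{N}=\mathcal{U}$.

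For the implication $\mathcal{N}=\mathcal{U}\Rightarrow$ (F) I would argue by contraposition. Assuming (F) fails, pick $(\gamma,a)\in\Gamma$, $\gamma=\Phi'(x)$, whose $(\sigma,a)$-expansion $x=\sum_{i\geq1}\delta(p_i)\alpha^{-i}$ has infinitely many nonzero digits. For each $k$ I split $\alpha^k x$ into its integer part $I_k=\sum_{i=1}^{k}\delta(p_i)\alpha^{k-i}\in\Z_{a,a_k}^{(k)}$ and its tail $r_k=\sum_{i>k}\delta(p_i)\alpha^{k-i}\in\text{Frac}(\sigma,a_k)$, which is strictly positive for every $k$ because infinitely many digits remain. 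Then $\Phi'(I_k)+\Phi'(r_k)=\alpha^k\gamma\to\mathbf{0}$, since $\alpha$ contracts $K_\sigma$. The points $\Phi'(r_k)$ lie in the uniformly discrete set $\Gamma$, so after passing to a subsequence with constant letter $a_k=b$ they are eventually constant, equal to some $\Phi'(r)$ with $r>0$; consequently $\Phi'(I_k)\to-\Phi'(r)$. Since $\Phi'(I_k)\in\Phi'(\Z_{\sigma,b}^{(k)})=\alpha^k T_{\rm ext}^{-k}(\mathbf{0},b)$, which converges in the Hausdorff metric to $\mathcal{R}_\sigma(b)$, the limit $-\Phi'(r)$ lies in $\mathcal{R}_\sigma(b)$, that is $\mathbf{0}\in\mathcal{R}_\sigma(b)+\Phi'(r)$ with $(\Phi'(r),b)\in\Gamma$ and $r>0$. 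Hence $(\Phi'(r),b)\in\mathcal{N}\setminus\mathcal{U}$ and $\mathbf{0}$ is not exclusive, as required.

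The reverse implication (F)$\,\Rightarrow\mathcal{N}=\mathcal{U}$ is where the strong coincidence hypothesis enters, and this is the step I expect to be the main obstacle. Here I would assume a node $(\gamma,a)\in\mathcal{N}\setminus\mathcal{U}$, i.e.\ $\gamma=\Phi'(r)$ with $r>0$ and $-\Phi'(r)=\sum_{k\geq0}\alpha^k\Phi'(\delta(q_k))$ for a left-infinite walk ending at $a$, and try to contradict (F). Since $\mathcal{G}^{(0)}$ is finite, the walk $(q_k)$ may be taken eventually periodic; applying the injectivity of $\Phi'$ on $\Q(\alpha)$ to the resulting geometric series transfers the identity into $\Q(\alpha)$ and produces a positive element $\mu\in V\cdot\Z[\alpha^{-1}]$ whose digit string is the purely periodic block coming from the cycle. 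If this block is an admissible expansion, then $\mu$ carries an infinite (periodic) $(\sigma,c)$-expansion, which by uniqueness of Dumont--Thomas expansions (Proposition \ref{DT}) and the finiteness forced by (F) is impossible.

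The delicate point is precisely to exclude degenerate cycles in which all suffixes vanish, for then the periodic representation is inadmissible and no immediate arithmetic contradiction is available; this is the step that genuinely uses strong coincidence. I expect to settle it by passing to the geometric picture: under strong coincidence the subtiles are mutually measure-disjoint (Siegel), so a further tile $\mathcal{R}_\sigma(a)+\gamma$ through $\mathbf{0}$, combined with the finite-expansion structure supplied by (F), would over-cover a neighbourhood of $\mathbf{0}$ in a way incompatible with measure-disjointness and with the vanishing of the Haar measure of the boundaries (Theorem \ref{basictheorem}(iii)). Turning this over-covering into a rigorous contradiction \emph{at the single boundary point} $\mathbf{0}$, rather than at a generic point of positive multiplicity, is the real crux of the argument.
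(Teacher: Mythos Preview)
Your reduction to the condition $\mathcal{N}=\mathcal{U}$ is correct and matches the paper's formulation. Your contrapositive argument for $\mathcal{N}=\mathcal{U}\Rightarrow\text{(F)}$ is also essentially the paper's proof of that direction: the paper phrases it by picking $(\gamma_0,a_0)\in\Gamma\setminus\bigcup_m T_{\rm ext}^{-m}(\mathcal{U})$ and using the invariance $\T(\Gamma)=\Gamma$ to build a sequence $(\gamma_k,a_k)$ which, by the contraction of $\alpha$ on $K_\sigma$, eventually enters $B(\mathbf{0},M)$ and then cycles inside the finite node set of $\mathcal{G}^{(0)}$. Your decomposition into integer and tail parts $I_k,r_k$ is the same mechanism viewed through the $T_\sigma$-orbit of $x$ rather than through $\T$; both produce the desired node $(\Phi'(r),b)\in\mathcal{N}\setminus\mathcal{U}$.

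Where your proposal diverges from the paper, and where the gap you yourself flag lies, is the direction $\text{(F)}\Rightarrow\mathcal{N}=\mathcal{U}$. You try to manufacture an infinite admissible expansion from a cycle in $\mathcal{G}^{(0)}$, worry about the degenerate case where all suffixes along the cycle vanish, and then hope to repair this via the measure-disjointness of the subtiles. The paper avoids all of this with a direct argument that uses only the \emph{local finiteness} of the covering (Proposition~\ref{coverprop}), not strong coincidence. Suppose $(\gamma,a)\in\mathcal{N}$ with $\gamma\neq\mathbf{0}$. Since (F) holds, $\gamma$ has a finite $(\sigma,a)$-expansion $\gamma=\sum_{j=1}^{m}\alpha^{-j}\Phi'(\delta(p_{-j}))$, while $-\gamma\in\mathcal{R}_\sigma(a)$ gives $-\gamma=\sum_{j\geq0}\alpha^{j}\Phi'(\delta(p_{j}))$. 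Summing yields $\mathbf{0}=\sum_{j=-m}^{\infty}\alpha^{j}\Phi'(\delta(p_{j}))$. Now multiply by $\alpha^{-k}$ for each $k\in\N$: the resulting identity splits as an element of $\mathcal{R}_\sigma(a_k)$ plus a translate $\gamma_k=\sum_{\ell=1}^{m+k}\alpha^{-\ell}\Phi'(\delta(p_{k-\ell}))$, so $\mathbf{0}\in\mathcal{R}_\sigma(a_k)+\gamma_k$ with $(\gamma_k,a_k)\in\Gamma$. Since $\gamma\neq\mathbf{0}$, not all $p_j$ with $j\geq0$ are empty; hence the digit strings of the $\gamma_k$ are pairwise distinct and, by uniqueness of Dumont--Thomas expansions, so are the $\gamma_k$ themselves. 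Thus $\mathbf{0}$ lies in infinitely many tiles of $\mathcal{C}_\sigma$, contradicting the uniform local finiteness of the covering.

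So your periodicity detour, the degenerate-cycle worry, and the appeal to measure-disjointness are all unnecessary for this direction. The strong coincidence hypothesis is in fact not used in either implication of the paper's proof of the lemma; it reappears only in the subsequent tiling theorem.
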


\begin{proof}
Suppose that $\V{0}$ is not an exclusive inner point of $\mathcal{R}_\sigma$. Then there exists $\gamma\neq \V{0}$, which has a finite expansion by property (F), such that $\V{0}\in\mathcal{R}_\sigma(a)+\gamma$, which implies
$\V{0}=\sum_{j=-m}^{\infty}\alpha^j\Phi'(\delta(p_j))$, for $m\in\N$. Multiplying by $\alpha^{-k}$ yields $\V{0}=\sum_{j=-m}^{\infty}\alpha^{j-k}\Phi'(\delta(p_j))$, that means
$\V{0}\in\mathcal{R}_\sigma(a_k)+\sum_{\ell=1}^{m+k}\alpha^{-\ell}\Phi'(\delta(p_{k-\ell}))$ for each $k\in\N$, where each of these sums represent a different element since the representation is unique. This gives a contradiction with the local finiteness of the covering. Therefore $\V{0}$ is an exclusive inner point.

Assume that (F) does not hold, \emph{i.e.}, there exists $(\gamma_0,a_0)\in\Gamma\setminus\bigcup_{m\geq 0}
T_{\rm ext}^{-m}(\mathcal{U})$. In particular $\gamma_0\neq \V{0}$. Since $\T(\Gamma)=\Gamma$, we can define a sequence $\{(\gamma_k,a_k)\}_{k\geq 1}$ of elements of $\Gamma$ with
\[ (\gamma_k,a_k)\in \T(\gamma_{k+1},a_{k+1})\text{, }\quad k\geq 0\text{.} \]
Since multiplication by $\alpha$ is a contraction in $K_\sigma$, for some $k_0\in\N$ large enough, $\gamma_k\in B(\V{0},M)$, for all $k\geq k_0$, where $M$ is as in equation (\ref{ball}).
There exist only finitely many $(\gamma_k,a_k)\in\Gamma$ such that $\gamma_k\in B(\V{0},M)$, since $\Gamma$ is a Delone set. Then
\[ \exists k'>k_0,\; \exists \ell>0\quad\text{ such that }\quad (\gamma_{k'},a_{k'})=(\gamma_{k'+\ell},a_{k'+\ell})\text{,} \]
and $\gamma_{k'}\neq \V{0}$, otherwise $\gamma_0\in\bigcup_{m\geq 0}T_{\rm ext}^{-m}(\mathcal{U})$.
This is equivalent to the existence of a loop in the zero-expansion graph $\mathcal{G}^{(0)}$
\[
\gamma_{k'}\rightarrow\gamma_{k'+\ell-1}\rightarrow\cdots \rightarrow\gamma_{k'+1}\rightarrow\gamma_{k'}\text{,}
\]
and, by the definition of $\mathcal{G}^{(0)}$, this implies that
$\V{0}\in \mathcal{R}_\sigma(a_{k'})+\gamma_{k'}$. Since $\gamma_{k'}\neq \V{0}$, we have that $\V{0}$ is not an exclusive inner point of $\mathcal{R}_\sigma$.
\end{proof}

Finally we can generalize the tiling condition given in \cite{ABBS:08} for beta numeration.

\begin{theorem}
Let $\sigma$ be an irreducible Pisot substitution. If $\sigma$ satisfies the geometric property (F) and the strong coincidence condition, the self-replicating multiple tiling $\{\mathcal{R}_\sigma(a)+\gamma : (\gamma,a)\in\Gamma\}$ is a tiling.
\end{theorem}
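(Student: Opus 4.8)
The plan is to leverage the two ingredients already assembled: by Theorem~\ref{multi} the collection $\mathcal{C}_\sigma$ is a multiple tiling of $K_\sigma$ of some constant multiplicity $m$, and by the preceding lemma the hypotheses (geometric property (F) together with the strong coincidence condition) guarantee that $\mathbf{0}$ is an exclusive inner point of the central tile $\mathcal{R}_\sigma$. Since for a multiple tiling the multiplicity is $\mu$-almost everywhere equal to a single constant $m$, it suffices to exhibit one region of positive measure on which the multiplicity equals $1$; then $m=1$ and $\mathcal{C}_\sigma$ is a genuine tiling. The natural region to look at is a small ball around $\mathbf{0}$.

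First I would localize around $\mathbf{0}$. Put $A=\{a\in\mathcal{A}:\mathbf{0}\in\mathcal{R}_\sigma(a)\}$, so that the tiles of $\mathcal{C}_\sigma$ containing $\mathbf{0}$ and carrying the translation $\gamma=\mathbf{0}$ are exactly the subtiles $\mathcal{R}_\sigma(a)$, $a\in A$. By exclusivity, $\mathbf{0}$ lies in no other tile $\mathcal{R}_\sigma(b)+\gamma$ with $(\gamma,b)\in\Gamma\setminus\mathcal{U}$. Because the covering is uniformly locally finite (Proposition~\ref{coverprop}) and the tiles are compact, only finitely many tiles meet a fixed bounded neighborhood of $\mathbf{0}$; those among them that do not contain $\mathbf{0}$ are closed sets avoiding $\mathbf{0}$, hence are at a positive distance from it. Taking the minimum over this finite list produces a radius $\varepsilon>0$ with the property that $B(\mathbf{0},\varepsilon)$ meets only the subtiles $\mathcal{R}_\sigma(a)$, $a\in A$. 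Moreover, since $\mathbf{0}\in\mathrm{int}(\mathcal{R}_\sigma)=\mathrm{int}(\bigcup_{a}\mathcal{R}_\sigma(a))$, after shrinking $\varepsilon$ I may also assume $B(\mathbf{0},\varepsilon)\subseteq\bigcup_{a\in A}\mathcal{R}_\sigma(a)$.

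Next I would count the multiplicity on this ball. Here the strong coincidence condition enters decisively: by Siegel's result recalled at the beginning of Section~\ref{sec-adic}, it forces the subtiles $\mathcal{R}_\sigma(a)$ to be pairwise disjoint in measure. Consequently $\mu$-almost every point of $B(\mathbf{0},\varepsilon)$ belongs to exactly one of the subtiles $\mathcal{R}_\sigma(a)$, $a\in A$, and, by the choice of $\varepsilon$, to no further tile of $\mathcal{C}_\sigma$. Thus the covering multiplicity equals $1$ on the positive-measure set $B(\mathbf{0},\varepsilon)$, which forces $m=1$; this is the desired tiling property.

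The delicate point, and the step I expect to require the most care, is precisely this passage from \emph{exclusive inner point} to \emph{covered exactly once almost everywhere}. It is not enough that $\mathbf{0}$ lie in no foreign tile: one must simultaneously use interiority (so that the whole ball, and not merely $\mathbf{0}$, is covered), the uniform local finiteness of Proposition~\ref{coverprop} (to extract a genuine positive $\varepsilon$ separating $\mathbf{0}$ from the tiles missing it), and the measure-disjointness coming from strong coincidence (so that the several subtiles $\mathcal{R}_\sigma(a)$, $a\in A$, which legitimately share the boundary point $\mathbf{0}$ do not inflate the count on a set of positive measure). Dropping strong coincidence would allow these subtiles to overlap, and the local multiplicity could jump above $1$, so this hypothesis cannot be waived in the argument.
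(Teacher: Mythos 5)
Your argument is correct and follows essentially the same route as the paper: invoke the preceding lemma to get that $\mathbf{0}$ is an exclusive inner point, use strong coincidence for the measure-disjointness of the subtiles to produce a positive-measure neighborhood of $\mathbf{0}$ covered exactly once, and conclude via Theorem~\ref{multi} that the constant covering degree is $1$. The paper's proof is just a terser version of this; your extraction of the radius $\varepsilon$ via local finiteness is the detail the paper leaves implicit.
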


\begin{proof}
By the geometric property (F) $\V{0}$ is an exclusive inner point. Since the strong coincidence condition holds, the subtiles $\mathcal{R}_\sigma(a)$ are disjoint in measure (see the end of Section \ref{sec-adic}) and there is a set of positive measure around $\V{0}$ which is covered only once. Since we know by Theorem \ref{multi} that $\{\mathcal{R}_\sigma(a)+\gamma : (\gamma,a)\in\Gamma\}$ is a multiple tiling, this implies that the covering degree is $1$.
\end{proof}
\end{section}

\begin{section}{Examples}\label{Sec:Ex}
In this section we consider two examples of irreducible non-unit Pisot substitutions.
\subsection{A two letters example}
Consider the substitution $\sigma(1)=1^5 2$, $\sigma(2)=1^3$. We have
\[ M_\sigma=\begin{pmatrix} 5 & 3 \\ 1 & 0 \end{pmatrix}\text{, }\qquad
\det(xI-M_\sigma)=x^2-5x-3\text{.} \]
The dominant eigenvalue is $\alpha=\frac{5+\sqrt{37}}{2}$ and its conjugate $\bar{\alpha}$ satisfies $\abs{\bar{\alpha}}<1$.

\begin{figure}[h]
\centering
\includegraphics[scale=.4]{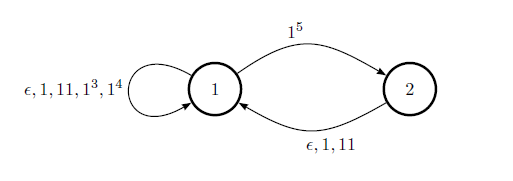}
\caption{Prefix automaton of $\sigma$.}
\end{figure}

For every prime divisor $p$ of $N(\alpha)=-3$ we have a prime $\mathfrak{p}\mid(p)$ such that $\abs{\alpha}_\mathfrak{p}<1$.
Therefore we factorize the prime ideal $(3)$ in $\mathcal{O}$ and we find
\[ (3)=(3,\alpha)(3,\alpha+1)=\underbrace{(\alpha)}_{\mathfrak{p}_1}\underbrace{(5-\alpha)}_{\mathfrak{p}_2}\text{,}
\]
\emph{i.e.}, $(3)$ splits completely in $\mathcal{O}$: $e_i=f_i=1$, for $i=1,2$. Now we look at the normalized absolute values $\abs{\alpha}_{\mathfrak{p}_1}=\frac{1}{3}$, $\abs{\alpha}_{\mathfrak{p}_2}=1$,
and we deduce that we have to consider only the non-Archimedean completion $K_{\p_1}$, which is an extension of degree $e_1f_1=1$ of $\mathbb{Q}_3$, equipped with the normalized absolute value $\abs{\,\cdot\,}_{\mathfrak{p}_1}$. Thus the representation space is $K_\sigma=\R\times \Q_3$. Notice that $\alpha$ is a uniformiser for $K_{\p_1}$ and we can represent each element of $\Q_3$ as $\sum_{i=m}^\infty d_i\alpha^i$ with $d_i\in\{0,1,2\}$, $m\in\Z$.
The canonical embedding is given explicitly by
\[
\Phi':\mathbb{Q}(\alpha)\longrightarrow \mathbb{R}\times\mathbb{Q}_3\text{, }\quad
a_0+a_1\alpha\longmapsto \Big(a_0+a_1\bar{\alpha},\sum_{i=m}^\infty d_i\alpha^i\Big)\text{.}
\]
We choose $\textbf{v}_\alpha=(\frac{\alpha}{3},1)$ as left eigenvector of $M_\sigma$. With this choice, we get the following set of digits for the Dumont-Thomas expansions:
\[ \mathcal{D}=\{\delta(\epsilon),\delta(1),\delta(11),\delta(1^3),\delta(1^4),\delta(1^5)\}
=\bigg\{0,\frac{\alpha}{3},\frac{2\alpha}{3},\alpha,\frac{4\alpha}{3},\frac{5\alpha}{3}\bigg\}\text{.}
\]
We obtain the central tile $\mathcal{R}_\sigma$ by taking the closure of the embedding of the $\sigma$-integers. Having chosen $\textbf{v}_\alpha$ as above, we get that $\mathcal{R}_\sigma\subset\mathbb{R}\times\mathbb{Z}_3$
(see Proposition \ref{pheight}). Furthermore, the subtiles satisfy the following set equations:
\begin{align*}
\mathcal{R}_\sigma(1)&=\alpha\mathcal{R}_\sigma(1)+(\alpha\mathcal{R}_\sigma(1)+
\Phi'(\delta(1)))+(\alpha\mathcal{R}_\sigma(1)+\Phi'(\delta(11))) \\
&\quad+(\alpha\mathcal{R}_\sigma(1)+\Phi'(\delta(1^3)))+(\alpha\mathcal{R}_\sigma(1)+\Phi'(\delta(1^4))) \\ &\quad+\alpha\mathcal{R}_\sigma(2)+
(\alpha\mathcal{R}_\sigma(2)+\Phi'(\delta(1)))+(\alpha\mathcal{R}_\sigma(2)+\Phi'(\delta(11)))\text{,} \\
\mathcal{R}_\sigma(2)&=\alpha\mathcal{R}_\sigma(1)+\Phi'(\delta(1^5))\text{.}
\end{align*}

\begin{figure}[h]
\begin{center}$
\begin{array}{cc}
\includegraphics[scale=.4]{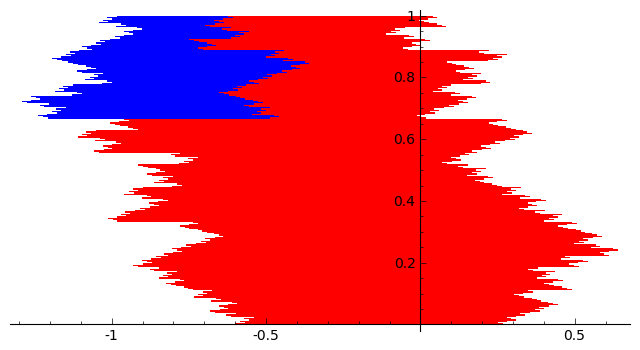} &
\includegraphics[scale=.4]{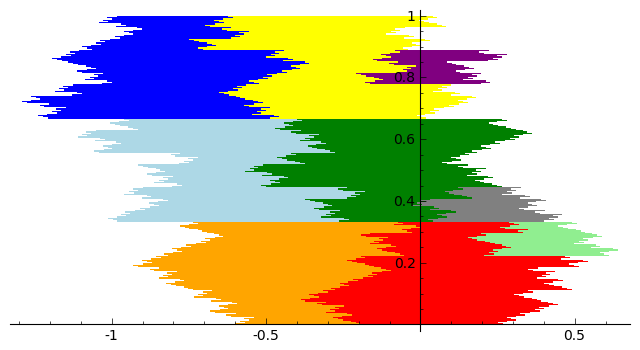}
\end{array}$
\end{center}
\setlength\unitlength{1mm}
\begin{picture}(0,0)
\put(-5,0){$\mathbb{R}$}
\put(-30,38){$\mathbb{Z}_3$}
\put(65,0){$\mathbb{R}$}
\put(39,38){$\mathbb{Z}_3$}
\end{picture}
\caption{The central tile $\mathcal{R}_\sigma$ divided in the red (light gray) subtile $\mathcal{R}_\sigma(1)$ and the blue (dark gray) subtile $\mathcal{R}_\sigma(2)$, and the self similar structure arising from the set equations.\label{fig1}}
\end{figure}

\begin{figure}[h]
\centering
\includegraphics[scale=.75]{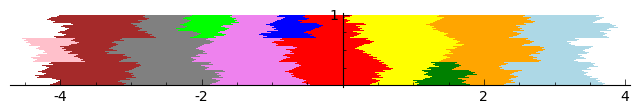}
\setlength\unitlength{1mm}
\begin{picture}(0,0)
\put(0,0){$\mathbb{R}$}
\put(-55,21){$\mathbb{Z}_3$}
\end{picture}
\caption{First $3$-adic level of the tiling of $K_\sigma$.}\label{figlev}
\end{figure}

In Figure \ref{figlev} it is represented the first $3$-adic level of the tiling, \emph{i.e.}, those tiles whose $\p$-adic part is contained in $\mathbb{Z}_3$: they are the $\mathcal{R}_x$ with
\[ x\in\bigg\{\frac{2\alpha}{3}-3,\frac{2\alpha}{3}-2,\frac{\alpha}{3}-1,0,1,2-\frac{\alpha}{3},
3-\frac{\alpha}{3}\bigg\}\text{.} \] One can check in fact that all these $x$ are $3$-adic integers. Furthermore the tiles $\mathcal{R}_x$, for $x=\frac{2\alpha}{3}-3,\frac{\alpha}{3}-1,0,3-\frac{\alpha}{3}$, are union of the two subtiles $\mathcal{R}_\sigma(1)$ and $\mathcal{R}_\sigma(2)$, because these $x$ are less than $1$, \emph{i.e.}, they are both in $[0,\delta(1))=[0,\alpha/3)$ and $[0,\delta(2))=[0,1)$.

\begin{figure}[h]
\centering
\includegraphics[scale=.7]{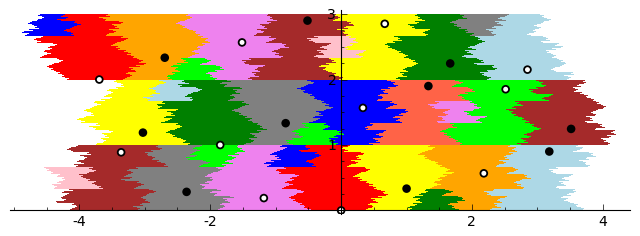}
\setlength\unitlength{1mm}
\begin{picture}(0,0)
\put(0,0){$\mathbb{R}$}
\put(-54,42){$\mathbb{Q}_3$}
\end{picture}
\caption{Tiling of the representation space $K_\sigma$ with translation set $\Gamma$. The black (white) points belong to $\Phi'(\text{Frac}(\sigma,1))$ (respectively $\Phi'(\text{Frac}(\sigma,2))$).}
\end{figure}

Note that for the representation of the tiles we used the following Euclidean model for $\mathbb{Q}_3$:
\[ g:\Q_3 \to \R^+\text{,}\quad
\sum_{i=m}^\infty d_i \alpha^i \mapsto \sum_{i=m}^\infty d_i 3^{-i-1}\text{.} \]
This map is onto, continuous, preserves the Haar measure but is not a homomorphism with respect to the addition.

In Figure \ref{figE1} and Figure \ref{figE2} it is represented the action of $\T$ on the basic faces $(\V{0},1)$, $(\V{0},2)$ and on their union $\mathcal{U}$. This is an example of substitution satisfying the geometric property (F).
Finally we show in Figure \ref{figex} the exchange of domains given by the substitution $\sigma$.

\begin{figure}[h]
\begin{subfigure}[h]{0.45\textwidth}
\centering
\includegraphics[scale=.45]{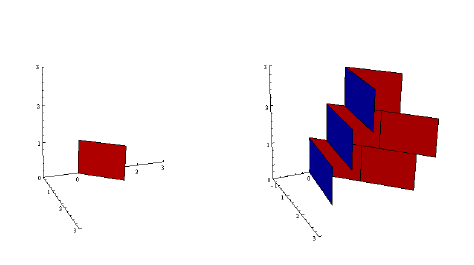}
\caption{The face $(\V{0},1)$ and its image $\T(\V{0},1)$.}
\end{subfigure}\qquad\quad
\begin{subfigure}[h]{0.45\textwidth}
\centering
\includegraphics[scale=.45]{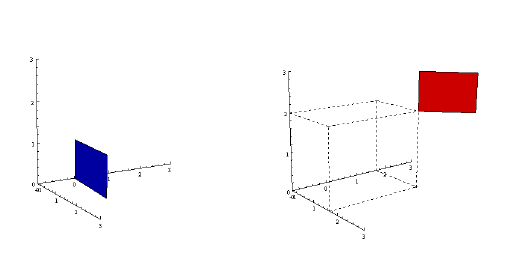}
\caption{The face $(\V{0},2)$ and its image $\T(\V{0},2)$.}
\end{subfigure}
\caption{}\label{figE1}
\end{figure}
\begin{figure}[h]
\begin{center}$
\begin{array}{cc}
\includegraphics[scale=.4]{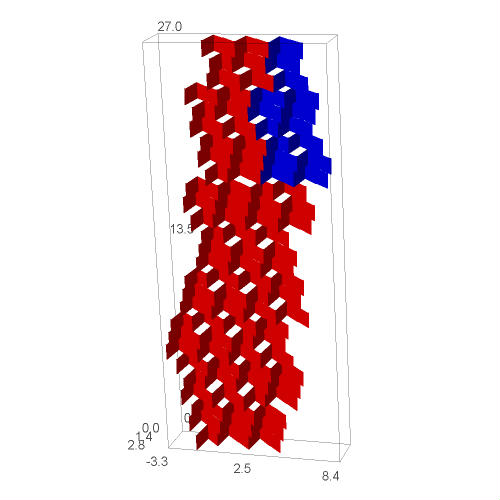} &
\includegraphics[scale=.37]{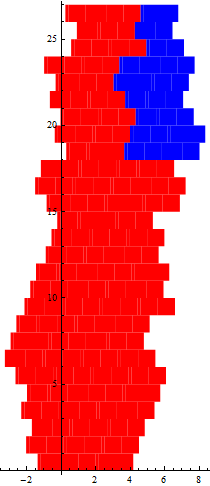}
\end{array}$
\end{center}
\caption{$T_{\rm ext}^{-3}(\mathcal{U})$ and its projection into $K_\sigma$.}\label{figE2}
\end{figure}

\begin{figure}[h]
\begin{center}$
\begin{array}{cc}
\includegraphics[scale=.35]{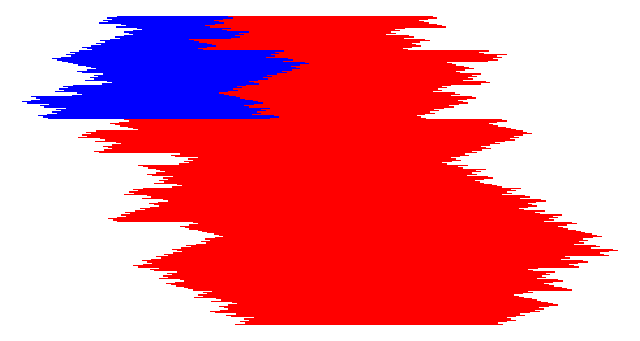} &
\includegraphics[scale=.35]{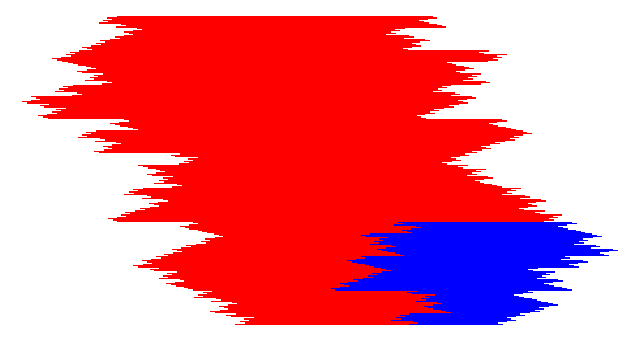}
\end{array}$
\end{center}
\caption{Exchange of domains.}\label{figex}
\end{figure}

\subsection{A three letters example}
Let $\sigma$ be the substitution $\sigma:1\mapsto 1113$, $2\mapsto 11$, $3\mapsto 2$.
The incidence matrix of the substitution and its characteristic polynomial are
\[ M_\sigma=\begin{pmatrix} 3 & 2 & 0 \\ 0 & 0 & 1 \\ 1 & 0 & 0 \end{pmatrix}\text{, }\qquad
\det(xI-M_\sigma)=x^3-3x^2-2\text{.} \]
One can check that the characteristic polynomial is irreducible over $\mathbb{Q}$ and has one real root $\alpha\approx 3.196$ and two complex conjugate roots
$\alpha_2,\bar{\alpha}_2\approx -0.098\pm 0.785 i$ with norm less than $1$.
\begin{figure}[h]
\centering
\includegraphics[scale=.5]{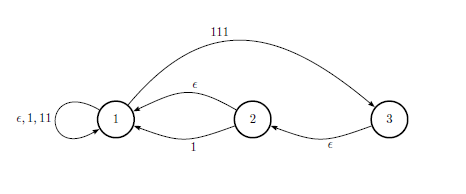}
\caption{Prefix automaton of $\sigma$.}
\end{figure}
We want to determine the representation space for the substitution $\sigma$.
Setting $K=\Q(\alpha)$, we know that the Archimedean part of the representation space is $\mathbb{C}$, while for the non-Archimedean part we have to compute the prime ideal factorization of $2\mathcal{O}$:
\[ (2)=(2,\alpha)^2\cdot(2,\alpha-1)=
\underbrace{(\alpha)^2}_{\mathfrak{p}_1^2}\cdot\underbrace{(-1-\alpha^2)}_{\mathfrak{p}_2}\text{.} \]
We have $\abs{\alpha}_{\mathfrak{p}_1}=\frac{1}{2}$, $\abs{\alpha}_{\mathfrak{p}_2}=1$, hence the non-Archimedean completion we have to consider is $K_{\mathfrak{p}_1}$, which is an extension of degree $e_1f_1=2$ of $\mathbb{Q}_2$.

There exist only $7$ non-isomorphic quadratic extensions of $\mathbb{Q}_2$, and one can check that $K_{\mathfrak{p}_1}\cong \Q_2(\sqrt{7})$. By the way, $\alpha$ is a uniformiser in $K_{\p_1}$, thus we can express every element of this completion as $\sum_{i=m}^\infty d_i \alpha^i$, with $d_i\in\{0,1\}$ and some $m\in\Z$.
Hence the canonical embedding is
\[ \Phi':\Q(\alpha)\to\mathbb{C}\times\mathbb{Q}_2(\sqrt{7})\text{,}\quad
a_0+a_1\alpha+a_2\alpha^2 \mapsto \bigg(a_0+a_1\alpha_2+a_2\alpha_2^2,\sum_{i=m}^\infty d_i \alpha^i\bigg)\text{.} \]
We represent each element of $\Q_2(\sqrt{7})$ with the Euclidean model $\Q_2(\sqrt{7})\to\R^+$,
$\sum_{i=m}^\infty d_i \alpha^i\mapsto \sum_{i=m}^\infty  d_i 2^{-i-1}$. In Figure \ref{fig3ex} the tiles associated to the substitution are represented. We choose $\V{v}_\alpha=(\alpha^2/2,\alpha,1)$ as left eigenvector of $M_\sigma$. With this choice we have the set of digits $\mathcal{D}=\{0,\alpha^2/2,\alpha^2,3\alpha^2/2\}$ for the Dumont-Thomas expansions. Furthermore we have that the $\p$-adic part of the central tile is contained in $\Z_2[\sqrt{7}]$.
\begin{figure}[h]
\centering
\includegraphics[scale=.35]{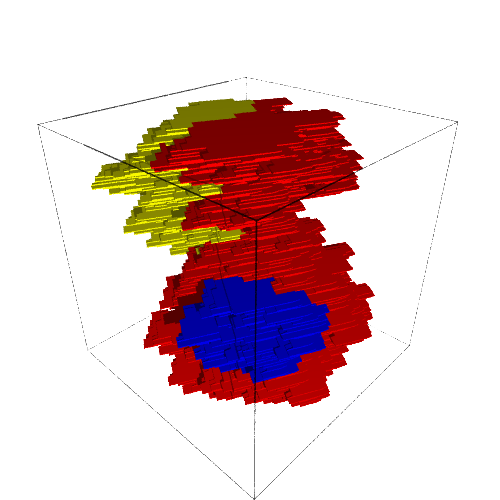}
\setlength\unitlength{1mm}
\begin{picture}(0,0)
\put(-20,5){$\C$}
\put(-32,55){$\Z_2[\sqrt{7}]$}
\end{picture}
\end{figure}

\begin{figure}[h]
\begin{center}$
\begin{array}{ccc}
\includegraphics[scale=.35]{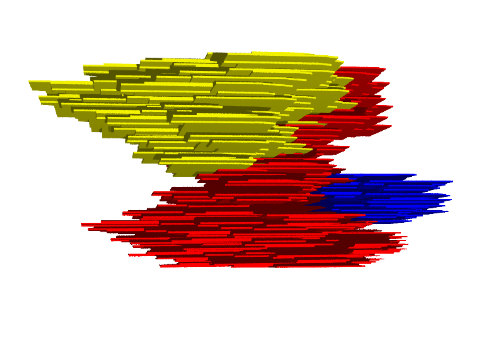} &
\includegraphics[scale=.35]{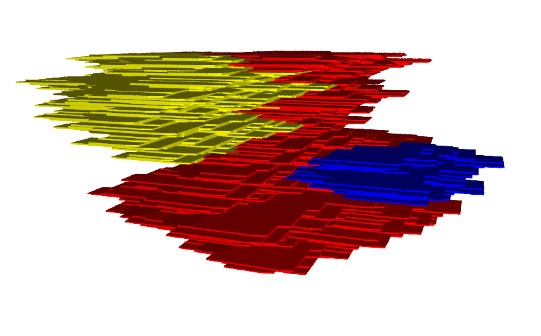} &
\includegraphics[scale=.35]{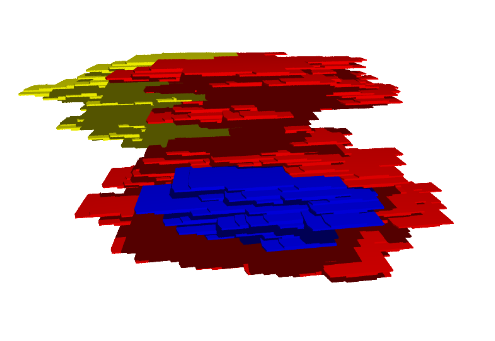}
\end{array}$
\end{center}
\caption{Pictures of the central tile $\mathcal{R}_\sigma$ divided in the red (gray) subtile $\mathcal{R}_\sigma(1)$, the blue (dark gray) $\mathcal{R}_\sigma(2)$ and the yellow (light gray) $\mathcal{R}_\sigma(3)$.\label{fig3ex}}
\end{figure}

\end{section}

{\bf Acknowledgements.} The authors would like to thank Val\'erie Berth\'e, Anne Siegel, Wolfgang Steiner and Christiaan van de Woestijne for many valuable comments and discussions.

\bibliographystyle{abbrv}
\bibliography{nonunitPaper}

\begin{thebibliography}{10}

\bibitem{A:00}
S.~Akiyama.
\newblock Cubic {P}isot units with finite beta expansions.
\newblock In {\em Algebraic number theory and {D}iophantine analysis ({G}raz,
  1998)}, pages 11--26. de Gruyter, Berlin, 2000.

\bibitem{A:02}
S.~Akiyama.
\newblock On the boundary of self affine tilings generated by {P}isot numbers.
\newblock {\em J. Math. Soc. Japan}, 54(2):283--308, 2002.

\bibitem{ABBS:08}
S.~Akiyama, G.~Barat, V.~Berth{\'e}, and A.~Siegel.
\newblock Boundary of central tiles associated with {P}isot beta-numeration and
  purely periodic expansions.
\newblock {\em Monatsh. Math.}, 155(3-4):377--419, 2008.

\bibitem{AI:00}
P.~Arnoux and S.~Ito.
\newblock Pisot substitutions and {R}auzy fractals.
\newblock {\em Bull. Belg. Math. Soc. Simon Stevin}, 8(2):181--207, 2001.
\newblock Journ{\'e}es Montoises d'Informatique Th{\'e}orique
  (Marne-la-Vall{\'e}e, 2000).

\bibitem{BM:04}
M.~Baake and R.~V. Moody.
\newblock Weighted {D}irac combs with pure point diffraction.
\newblock {\em J. Reine Angew. Math.}, 573:61--94, 2004.

\bibitem{BBJS:12}
M.~Barge, H.~Bruin, L.~Jones, and L.~Sadun.
\newblock Homological {P}isot substitutions and exact regularity.
\newblock {\em Israel J. Math.}, 188:281--300, 2012.

\bibitem{BK:06}
M.~Barge and J.~Kwapisz.
\newblock Geometric theory of unimodular {P}isot substitutions.
\newblock {\em Amer. J. Math.}, 128(5):1219--1282, 2006.

\bibitem{BS:05}
V.~Berth{\'e} and A.~Siegel.
\newblock Tilings associated with beta-numeration and substitutions.
\newblock {\em Integers}, 5(3):A2, 46, 2005.

\bibitem{BS:07}
V.~Berth{\'e} and A.~Siegel.
\newblock Purely periodic {$\beta$}-expansions in the {P}isot non-unit case.
\newblock {\em J. Number Theory}, 127(2):153--172, 2007.

\bibitem{BSSST:10}
V.~Berth{\'e}, A.~Siegel, W.~Steiner, P.~Surer, and J.~M. Thuswaldner.
\newblock Fractal tiles associated with shift radix systems.
\newblock {\em Adv. Math.}, 226(1):139--175, 2011.

\bibitem{BST:10}
V.~Berth{\'e}, A.~Siegel, and J.~Thuswaldner.
\newblock Substitutions, {R}auzy fractals and tilings.
\newblock In {\em Combinatorics, automata and number theory}, volume 135 of
  {\em Encyclopedia Math. Appl.}, pages 248--323. Cambridge Univ. Press,
  Cambridge, 2010.

\bibitem{CS:01a}
V.~Canterini and A.~Siegel.
\newblock Automate des pr\'efixes-suffixes associ\'e \`a une substitution
  primitive.
\newblock {\em J. Th\'eor. Nombres Bordeaux}, 13(2):353--369, 2001.

\bibitem{CS:01}
V.~Canterini and A.~Siegel.
\newblock Geometric representation of substitutions of {P}isot type.
\newblock {\em Trans. Amer. Math. Soc.}, 353(12):5121--5144, 2001.

\bibitem{DT:89}
J.-M. Dumont and A.~Thomas.
\newblock Systemes de numeration et fonctions fractales relatifs aux
  substitutions.
\newblock {\em Theoret. Comput. Sci.}, 65(2):153--169, 1989.

\bibitem{Du:10}
F.~Durand.
\newblock Combinatorics on {B}ratteli diagrams and dynamical systems.
\newblock In {\em Combinatorics, automata and number theory}, volume 135 of
  {\em Encyclopedia Math. Appl.}, pages 324--372. Cambridge Univ. Press,
  Cambridge, 2010.

\bibitem{PF:02}
N.~P. Fogg.
\newblock {\em Substitutions in dynamics, arithmetics and combinatorics},
  volume 1794 of {\em Lecture Notes in Mathematics}.
\newblock Springer-Verlag, Berlin, 2002.
\newblock Edited by V. Berth{\'e}, S. Ferenczi, C. Mauduit and A. Siegel.

\bibitem{FS:92}
C.~Frougny and B.~Solomyak.
\newblock Finite beta-expansions.
\newblock {\em Ergodic Theory Dynam. Systems}, 12(4):713--723, 1992.

\bibitem{IR:06}
S.~Ito and H.~Rao.
\newblock Atomic surfaces, tilings and coincidence. {I}. {I}rreducible case.
\newblock {\em Israel J. Math.}, 153:129--155, 2006.

\bibitem{KS:12}
C.~Kalle and W.~Steiner.
\newblock Beta-expansions, natural extensions and multiple tilings associated
  with {P}isot units.
\newblock {\em Trans. Amer. Math. Soc.}, 364(5):2281--2318, 2012.

\bibitem{K:66}
K.~Kuratowski.
\newblock {\em Topology. {V}ol. {I}}.
\newblock New edition, revised and augmented. Translated from the French by J.
  Jaworowski. Academic Press, New York, 1966.

\bibitem{MW:88}
R.~D. Mauldin and S.~C. Williams.
\newblock Hausdorff dimension in graph directed constructions.
\newblock {\em Trans. Amer. Math. Soc.}, 309(2):811--829, 1988.

\bibitem{MO:97}
R.~V. Moody.
\newblock Meyer sets and their duals.
\newblock In {\em The mathematics of long-range aperiodic order ({W}aterloo,
  {ON}, 1995)}, volume 489 of {\em NATO Adv. Sci. Inst. Ser. C Math. Phys.
  Sci.}, pages 403--441. Kluwer Acad. Publ., Dordrecht, 1997.

\bibitem{N:99}
J.~Neukirch.
\newblock {\em Algebraic number theory}, volume 322 of {\em Grundlehren der
  Mathematischen Wissenschaften [Fundamental Principles of Mathematical
  Sciences]}.
\newblock Springer-Verlag, Berlin, 1999.
\newblock Translated from the 1992 German original and with a note by Norbert
  Schappacher, With a foreword by G. Harder.

\bibitem{Qu:10}
M.~Queff{\'e}lec.
\newblock {\em Substitution dynamical systems---spectral analysis}, volume 1294
  of {\em Lecture Notes in Mathematics}.
\newblock Springer-Verlag, Berlin, second edition, 2010.

\bibitem{R:82}
G.~Rauzy.
\newblock Nombres alg\'ebriques et substitutions.
\newblock {\em Bull. Soc. Math. France}, 110(2):147--178, 1982.

\bibitem{R:88}
G.~Rauzy.
\newblock Rotations sur les groupes, nombres alg\'ebriques, et substitutions.
\newblock In {\em S\'eminaire de Th\'eorie des Nombres (Talence, 1987-1988),
  Exp. No. 21}. Univ. Bordeaux I, Talence, 1988.

\bibitem{SAI:01}
Y.~Sano, P.~Arnoux, and S.~Ito.
\newblock Higher dimensional extensions of substitutions and their dual maps.
\newblock {\em J. Anal. Math.}, 83:183--206, 2001.

\bibitem{Se:79}
J.-P. Serre.
\newblock {\em Local fields}, volume~67 of {\em Graduate Texts in Mathematics}.
\newblock Springer-Verlag, New York, 1979.
\newblock Translated from the French by Marvin Jay Greenberg.

\bibitem{Si:03}
A.~Siegel.
\newblock Repr\'esentation des syst\`emes dynamiques substitutifs non
  unimodulaires.
\newblock {\em Ergodic Theory Dynam. Systems}, 23(4):1247--1273, 2003.

\bibitem{ST:09}
A.~Siegel and J.~M. Thuswaldner.
\newblock Topological properties of {R}auzy fractals.
\newblock {\em M\'em. Soc. Math. Fr. (N.S.)}, (118):140, 2009.

\bibitem{Sin:06}
B.~Sing.
\newblock {\em Pisot substitutions and beyond}.
\newblock PhD thesis, University of Bielefeld, 2006.
\newblock
  \url{http://nbn-resolving.de/urn/resolver.pl?urn=urn:nbn:de:hbz:361-11555}.

\bibitem{SS:02}
V.~F. Sirvent and B.~Solomyak.
\newblock Pure discrete spectrum for one-dimensional substitution systems of
  {P}isot type.
\newblock {\em Canad. Math. Bull.}, 45(4):697--710, 2002.
\newblock Dedicated to Robert V. Moody.

\end{thebibliography}
\end{document}